\newtheorem{thm}{Theorem}[section]
\newtheorem{cor}[thm]{Corollary}
\newtheorem{lem}[thm]{Lemma}
\newtheorem{prop}[thm]{Proposition}
\theoremstyle{definition}
\newtheorem{defn}[thm]{Definition}
\theoremstyle{remark}
\newtheorem{rem}[thm]{Remark}
\numberwithin{equation}{section}
\newcommand{\N}{\mathbf{N}}
\newcommand{\C}{\mathbf{C}}
  \newcommand{\tr}[1]{\ensuremath{\prescript{t}{}{#1}}}%
  \newcommand{\cC}{\ensuremath{\mathcal{C} }}%
  \newcommand{\cU}{\ensuremath{\mathcal{U} }}%
  \newcommand{\cF}{\ensuremath{\mathcal{F} }}%
  \newcommand{\cG}{\ensuremath{\mathcal{G} }}%
  \newcommand{\cE}{\ensuremath{\mathcal{E} }}%
  \newcommand{\cR}{\ensuremath{\mathcal{R} }}%
\let\@wraptoccontribs\wraptoccontribs
\begin{document}

\title{A local characterization of Kazhdan projections and applications}

\date{}
\author[De La Salle]{Mikael De La Salle} \address{UMPA, CNRS ENS de Lyon\\ Lyon\\FRANCE}\email{mikael.de.la.salle@ens-lyon.fr}

\begin{abstract} 
  We give a local characterization of the existence of Kazhdan projections for arbitary families of Banach space representations of a compactly generated locally compact group $G$. We also define and study a natural generalization of the Fell topology to arbitrary Banach space representations of a locally compact group. We give several applications in terms of stability of rigidity under perturbations. Among them, we show a Banach-space version of the Delorme--Guichardet theorem stating that property (T) and (FH) are equivalent for $\sigma$-compact locally compact groups. Another kind of applications is that many forms of Banach strong property (T) are open in the space of marked groups, and more generally every group with such a property is a quotient of a compactly presented group with the same property. We also investigate the notions of central and non central Kazhdan projections, and present examples of non central Kazhdan projections coming from hyperbolic groups.
\end{abstract}
\maketitle

\section{Introduction}

Let $G$ be a finitely generated group with finite symmetric generating set $S$ and associated word-length $\ell$. Consider the combinatorial laplacian $\Delta$
\emph{i.e.} the element of the group algebra of $G$ defined by
\[ \Delta = \frac{1}{2|S|} \sum_{s \in S} (s-1)^* (s-1) = 1 - \frac{1}{|S|} \sum_{s \in S} s \in \C[G].\]
A unitary representation $(\pi,\mathcal H)$ has spectral gap if there is $\varepsilon$ such that the spectrum of $\pi(\Delta)$  is contained in $\{0\} \cup [\varepsilon,2]$. Since for $t \in [0,2]$, the inequality $(1-\frac t 2)^2 t \leq (1- \frac \varepsilon 2)^2 t$ holds if and only if $t \in \{0\} \cup [\varepsilon,2]$, this is equivalent to the validity of 
\[ \pi\left( (1-\frac 1 2 \Delta)^2\Delta \right)\leq (1-\frac 1 2 \varepsilon)^2 \pi(\Delta) \textrm{ in }B(\mathcal H).\]
If we write $m= 1-\frac 1 2 \Delta = \frac 1 2 + \frac{1}{2|S|} \sum_{s \in S} s$, this is equivalent to the inequality
\begin{equation}\label{eq:local_shrinkingHilbert} \left(\sum_{s \in S} \|\pi(s) \pi(m) x - \pi(m)x\|^2 \right)^{1/2} \leq (1- \frac \varepsilon 2) \left(\sum_{s \in S} \|\pi(s) x - x\|^2 \right)^{1/2}.\end{equation}
for every $x \in \mathcal H$. In words, averaging on the orbit of $x$ with respect to the probability measure $m$ gives a vector which is moved $(1-\frac \varepsilon 2)$ times less than $x$ by the elements of $S$. The validity of \eqref{eq:local_shrinkingHilbert} for every unitary representation $(\pi,\mathcal H)$ of $G$ and $x$ in $\mathcal H$ therefore characterizes when $G$ has a uniform spectral gap for every unitary representation, \emph{i.e.} when $G$ has Kazhdan's property (T). As was already observed in \cite{fishermargulis}, the importance of this criterion for property (T) is that it is local~: if $(\pi,E)$ is a Banach space representation (or more generally an affine action) which is close to a unitary representation (or an isometric action on a Hilbert space) of $G$ then the same inequality will hold with $\varepsilon$ replaced by $\varepsilon/2$ for $(\pi,E)$. We will make the term ``close to a unitary representation'' precise later, but for example this includes representations on a space close to a Hilbert space (in the sense that the parallelogram identity holds up to a small multiplicative error) of a group close to $G$ in the space of marked groups and such that $\|\pi(g)\|$ is close to $1$ for all $g$ in $S$. This observation proves at the same time (1) Fisher and Margulis's theorem \cite{baderfurmangelandermonod} that every isometric action of a group with property (T) on an $L^p$ space has a fixed point for $p$ close enough to $2$ (2) Shalom's Theorem \cite{shalom} that property (T) is an open property in the space of marked groups (3) the fact that property (T) implies robust property (T) for spaces close to Hilbert spaces. This last fact answers a question raised in a preliminary version of \cite{oppenheim}. 

All the preceding was probably known to experts, and in particular to the authors of \cite{fishermargulis}. The first original contribution of this work is that the existence of a measure satisfying \eqref{eq:local_shrinkingHilbert} characterizes the existence of so called Kazhdan projection, not only for unitary representations as the short computation above proves, but also for arbitrary families of representations of $G$ on Banach spaces, not necessarily isometric or uniformly bounded.

The setting is the following. Let $\cF$ be a collection of representations of $G$ on Banach spaces satisfying the very mild condition
\begin{equation}\label{eq:unif_bound} \sup_{(\pi,E) \in \cF} \|\pi(g)\|_{B(E)} <\infty\ \  \forall g \in S.\end{equation}
This condition allows to define a seminorm on the group algebra $\C[G]$ by 
\begin{equation}\label{eq:def_CF} \|a\|_{\cF} = \sup_{(\pi,E) \in \cF} \|\pi(a)\|_{B(E)}.\end{equation}
We denote by $\cC_{\cF}(G)$ the completion of $\C[G]$ for this seminorm. This is a generalization of the maximal $C^*$-algebra of a group, which corresponds to the case when $\cF$ is the unitary representations of $G$ on a Hilbert space.
\begin{defn} A Kazhdan projection in $\cC_{\cF}(G)$ is an idempotent $p$ belonging to the closure of $\{m \in \C[G], \sum_{g \in G} m(g)=1\}$ such that, for every $(\pi,E)$ in $\cF$, $\pi(p)$ is a projection on the space of invariant vectors $E^\pi=\{x \in E, \pi(g) x=x \forall g \in G\}$ .

A Kazhdan projection is called central if it belongs to the center of $\cC_{\cF}(G)$.
\end{defn}
The importance of such projections for general Banach space representations comes from the work of Lafforgue \cite{lafforguestrongt}, see subsequent work \cite{lafforguefastfourier,liao,oppenheim,delasalle1,delaatdelasalle1}. Kazhdan projections have been studied in depth recently in \cite{drutunowak}, but our terminology is a bit different, as they call a Kazhdan projection what we call here a central Kazhdan projection. Motivations for this choice of terminology are presented in \S \ref{sec:hamlet}, where the disctinction between central and non central Kazhdan projections is made clear. In particular in Corollary \ref{cor:KP_automatically_central} it is proved that when $\cC_\cF(G)$ is stable by duality, then a Kazhdan projection is always central. We also present in Remark \ref{rem:noncentralKP} some natural examples where there are Kazhdan projections but no central Kazhdan projections. See Remark \ref{rem:comment_K_proj} for other comments.

If $(\pi,E)$ is a representation of $G$ (or more generally an action of $G$ on a Banach space $E$) and $x \in E$, we will measure by
\begin{equation}\label{eq:defdelta} \delta_S^\pi(x) = \max_{s \in S} \|\pi(s) x - x\|_E\end{equation}
the maximal amount by which $x$ is moved by the elements of $S$. We could as well have defined $\delta_S^\pi$ by the formula $(\sum_{s \in S} \|\pi(s) x - x\|^2 )^{1/2}$ as in \eqref{eq:local_shrinkingHilbert}, but since we do not only work with Hilbert spaces, we prefer to use formula \eqref{eq:defdelta}, which is not less relevant but is simpler.

Our main new contribution is the following local characterization of Kazhdan projections, which generalizes to non unitary representations the easy observation from the beginning of the introduction.
\begin{thm}\label{thm=local_characterization_of_Kazhdan_constant} $\cC_{\cF}(G)$ contains a Kazhdan projection if and only if there exists $m \in \C[G]$ with $\sum_g m(g)=1$ such that 
\begin{equation}\label{eq:m_locally_shrinks} \delta_S^\pi(\pi(m) x) \leq \frac 1 2\delta_S^\pi(x) \textrm{ for all }(\pi,E)\in \cF\textrm{ and }x \in E.\end{equation}
If these properties hold and if $\sigma$ is an affine action of $G$ whose linear part belongs to $\cF$, then $\sigma$ has a fixed point if and only if $\delta_S^\sigma(\sigma(m) x) \leq \frac 1 2\delta_S^\sigma(x)$ for all $x \in E$.
\end{thm}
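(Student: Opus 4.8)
The overall strategy is to reduce everything to one device --- iterating the averaging operator $\pi(m)$ --- together with the \emph{telescoping estimate}: with $K:=\sup_{(\pi,E)\in\cF}\max_{s\in S}\|\pi(s)\|$, finite by \eqref{eq:unif_bound}, one has $\|\pi(g-1)y\|\le \ell(g)(K+1)^{\ell(g)-1}\,\delta_S^\pi(y)$ for every $(\pi,E)\in\cF$, $g\in G$, $y\in E$, simply by writing a geodesic word $g=s_1\cdots s_k$ and expanding $g-1=\sum_{j=1}^k s_1\cdots s_{j-1}(s_j-1)$; the same identity read in an affine action $\sigma$ with linear part $\pi$ gives $\|\sigma(g)y-y\|\le \ell(g)(K+1)^{\ell(g)-1}\delta_S^\sigma(y)$. \textbf{From \eqref{eq:m_locally_shrinks} to a Kazhdan projection.} Iterating \eqref{eq:m_locally_shrinks} yields $\delta_S^\pi(\pi(m)^nx)\le 2^{-n}\delta_S^\pi(x)$, and since $\sum_g m(g)=1$ we have $m-1=\sum_g m(g)(g-1)$, so $\|\pi(m)^{n+1}x-\pi(m)^nx\|=\|\pi(m-1)\pi(m)^nx\|\le C\,\delta_S^\pi(\pi(m)^nx)\le C(K+1)2^{-n}\|x\|$ with $C=\sum_g|m(g)|\ell(g)(K+1)^{\ell(g)-1}$ \emph{independent of $(\pi,E)$}. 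As the norm of $\cC_\cF(G)$ restricted to $\C[G]$ is the supremum over $\cF$, the sequence $(m^n)_n$ is Cauchy in $\cC_\cF(G)$; its limit $p$ is idempotent (multiplication being continuous, $m^{2n}\to p^2$ and $m^{2n}\to p$) and lies in the closure of $\{a\in\C[G]:\sum_g a(g)=1\}$. For $(\pi,E)\in\cF$ one has $\pi(p)=\lim_n\pi(m)^n$, so continuity of $\delta_S^\pi$ forces $\delta_S^\pi(\pi(p)x)=0$, i.e. $\pi(p)x\in E^\pi$; while $\pi(m)$, hence $\pi(p)$, fixes $E^\pi$ pointwise because $\sum_g m(g)=1$. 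Thus $\pi(p)$ projects onto $E^\pi$ and $p$ is a Kazhdan projection.

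\textbf{From a Kazhdan projection $p$ to such an $m$.} Write $p=\lim_k m_k$, $m_k\in\C[G]$, $\sum_g m_k(g)=1$. The key point is a \emph{uniform Kazhdan inequality}: there is $c<\infty$ with $\|x-\pi(p)x\|\le c\,\delta_S^\pi(x)$ for all $(\pi,E)\in\cF$, $x\in E$. If this fails, for each $n$ pick $(\pi_n,E_n)\in\cF$ and $x_n$ with $\|x_n-\pi_n(p)x_n\|>n\,\delta_S^{\pi_n}(x_n)$, and set $z_n:=(x_n-\pi_n(p)x_n)/\|x_n-\pi_n(p)x_n\|$; then $\|z_n\|=1$, $\pi_n(p)z_n=0$, and $\delta_S^{\pi_n}(z_n)<1/n$ (replacing $x_n$ by $x_n-\pi_n(p)x_n$ leaves $\delta_S^{\pi_n}$ unchanged, since $\pi_n(s)-1$ vanishes on $E_n^{\pi_n}\supseteq\operatorname{range}\pi_n(p)$). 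For each fixed $k$ the telescoping estimate gives $\|\pi_n(m_k)z_n-z_n\|\le C_k\,\delta_S^{\pi_n}(z_n)\to 0$, so $\|\pi_n(m_k)z_n\|\to 1$; but $\pi_n(p)z_n=0$ gives $\|\pi_n(m_k)z_n\|=\|\pi_n(m_k-p)z_n\|\le\|m_k-p\|_{\cC_\cF(G)}$, whence $1\le\|m_k-p\|_{\cC_\cF(G)}$ for every $k$, contradicting $m_k\to p$. Now choose $k$ with $c\max_{s\in S}\|(s-1)(m_k-p)\|_{\cC_\cF(G)}\le\tfrac12$ and put $m=m_k$. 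For $(\pi,E)\in\cF$ and $x\in E$, set $z=x-\pi(p)x$, so that $\pi(p)z=0$, $\delta_S^\pi(z)=\delta_S^\pi(x)$ and $\|z\|\le c\,\delta_S^\pi(x)$. Since $(\pi(m)-\pi(p))\pi(p)=0$ (because $\pi(m)$ fixes $\operatorname{range}\pi(p)=E^\pi$) and $\pi(s)-1$ kills $\pi(p)x$, we get $(\pi(s)-1)\pi(m)x=(\pi(s)-1)(\pi(m)-\pi(p))z=\pi\big((s-1)(m-p)\big)z$, hence $\delta_S^\pi(\pi(m)x)\le\max_s\|(s-1)(m-p)\|_{\cC_\cF(G)}\,\|z\|\le\tfrac12\delta_S^\pi(x)$.

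\textbf{The affine addendum.} If $\sigma$ has a fixed point $x_0$, conjugating by the translation by $x_0$ identifies $\sigma$ with its linear part $\pi\in\cF$ in such a way that $\sigma(s)y-y=\pi(s)(y-x_0)-(y-x_0)$ and $\sigma(m)x-x_0=\pi(m)(x-x_0)$; hence $\delta_S^\sigma(\sigma(m)x)=\delta_S^\pi(\pi(m)(x-x_0))\le\tfrac12\delta_S^\pi(x-x_0)=\tfrac12\delta_S^\sigma(x)$ by \eqref{eq:m_locally_shrinks}. Conversely, if $\delta_S^\sigma(\sigma(m)x)\le\tfrac12\delta_S^\sigma(x)$ for all $x$, iterate $x_{n+1}=\sigma(m)x_n$ from an arbitrary $x_0$: exactly as in the first part, with the affine telescoping estimate ($K=\max_s\|\pi(s)\|<\infty$ being harmless for a single action), $(x_n)_n$ is Cauchy and its limit is fixed by every $\sigma(s)$, $s\in S$, hence by all of $G$. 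The step I expect to be the crux is the uniform Kazhdan inequality: this is the only point where idempotency of $p$, rather than mere membership of $p$ in the closure of the affine span of probability measures, is genuinely used, and the contradiction argument above --- normalising the non-invariant part of a hypothetical bad sequence and playing the convergence $m_k\to p$ against the telescoping estimate --- is what makes it work for arbitrary $\cF$, with no duality or centrality assumption on $\cC_\cF(G)$; everything else is bookkeeping with geometric series.
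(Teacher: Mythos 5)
Your proposal is correct and follows essentially the same route as the paper: the implication from \eqref{eq:m_locally_shrinks} to the projection is the same iteration argument ($\delta_S^\pi(\pi(m)^n x)\le 2^{-n}\delta_S^\pi(x)$, telescoping, $m^{\ast n}$ Cauchy in $\cC_\cF(G)$, limit a projection onto $E^\pi$), the converse is the same two-step scheme (a uniform Kazhdan-type inequality plus an approximant $m_k$ of $p$, using that $(\pi(s)-1)$ kills the range of $\pi(p)$), and the affine addendum is handled identically. The only divergence is your proof of the uniform inequality: the paper's Lemma \ref{lem=kazhdan_implies_noaivector} gets $\delta_S^\pi(x)\ge c\|x\|_{E/E^\pi}$ directly (and with an explicit constant) from a single $f$ with $\int f=1$ and $\|f-p\|_{\cF}\le \tfrac12$ via the quotient-norm trick, whereas you obtain the equivalent bound $\|x-\pi(p)x\|\le c\,\delta_S^\pi(x)$ by a contradiction/normalization argument playing $m_k\to p$ against the telescoping estimate; both are valid, yours merely sacrificing the explicit constant.
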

We point out the following~: contrary to the previous characterizations of Kazhdan projections in \cite{drutunowak}, the fact that $E^\pi$ has a complement subspace is not part of the hypothesis, it is a consequence of \eqref{eq:m_locally_shrinks}. Of course, Theorem \ref{thm=local_characterization_of_Kazhdan_constant} remains true if $\frac 1 2$ is replaced by any number in $(0,1)$.

The main interest of this characterization of the existence of Kazhdan projections is that it is completely local~: if the support of $m$ is contained in $B_R=\{g \in G, |g|_S \leq R\}$, and if $(\pi,E)$ is a representation of $G$ such that for every $x \in E$, the $B_{R+1}$-orbit of $x$ is ``almost isometric'' to the $B_{R+1}$-orbit of a point $x' \in E'$ for a representation $(\pi',E') \in \cF$, then (\ref{item=local_shrinking}) also holds for $(\pi,E)$, perhaps with $\frac 1 2$ replaced by $\frac 1 2+\varepsilon$. This opens the possibility of applying ultraproduct constructions as in \cite{gromov,fishermargulis}. This simple observation by itself is however not very useful from a representation-theoretical point of view, as this notion of $(\pi,E)$ being close to a representation in $\cF$ is strong. On the opposite there is a natural topology (related to the Fell topology) on every set of Banach space representations of $G$ satisfying \eqref{eq:unif_bound}, that we define and study in \S \ref{sec:topology}. To feel the difference between these two notions of representations being close, consider for example the case when $x_n$ is a sequence of almost invariant unit vectors of a unitary representation $\pi$. For the Fell topology, this says that $\pi$ is close to the trivial representation, which does not say anything about the validity of \eqref{eq:m_locally_shrinks} for $x_n$. To obtain useful information, it is better to zoom around $x_n$. Then the origin disappears from the vision, and the representation now looks much more like an affine action with linear part a representation close to $\pi$ in the Fell topology. This vague discussion should give an informal explanation why cohomology enters into our second main result, and this will be made precise in its proof. Recall that $H^1(G;\pi)=0$ means that every affine action of $G$ with linear part $\pi$ has a fixed point. See Theorem \ref{thm:Kazproj} for a more precise statement.
\begin{thm}\label{thm:Kazproj+H1_open} Let $\cF$ be a set of representations of $G$ satisfying \eqref{eq:unif_bound}. If $\cC_{\cF}(G)$ has a Kazhdan projection and $H^1(G;\pi) = 0$ for every $(\pi,E) \in F$, then there is a strong neighbourhood (see Definition \ref{defn:strong_neigh}) $\cF'$ of $\cF$ such that  $\cC_{\cF'}(G)$ has a Kazhdan projection and $H^1(G;\pi) = 0$ for every $(\pi,E) \in \cF'$.
\end{thm}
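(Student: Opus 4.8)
The plan is to fold the two hypotheses, and likewise the two conclusions, into a single \emph{local} inequality, and then to invoke the fact that such an inequality is an open condition for the strong topology. For a family $\cG$ of representations of $G$ satisfying \eqref{eq:unif_bound}, say that $\cG$ has property $(\star)$ if there exist $m\in\C[G]$ with $\sum_g m(g)=1$ and a constant $c<1$ such that
\[ \delta_S^\sigma(\sigma(m)x)\le c\,\delta_S^\sigma(x)\quad\text{for every affine action }\sigma\text{ of }G\text{ with linear part in }\cG\text{ and every vector }x. \]
The first step is to prove that $\cG$ has $(\star)$ if and only if $\cC_\cG(G)$ has a Kazhdan projection and $H^1(G;\pi)=0$ for every $(\pi,E)\in\cG$. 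Granting this, the hypotheses say precisely that $\cF$ has $(\star)$, the conclusion says precisely that $\cF'$ does, and it is enough to exhibit a strong neighbourhood $\cF'$ of $\cF$ with property $(\star)$.

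To establish the equivalence, restrict the displayed inequality to the linear actions $\sigma=\pi$, $(\pi,E)\in\cG$; this is exactly \eqref{eq:m_locally_shrinks} for $\cG$ (with $\tfrac12$ replaced by $c$, which is permitted by the last sentence of Theorem~\ref{thm=local_characterization_of_Kazhdan_constant}), hence equivalent by that theorem to $\cC_\cG(G)$ having a Kazhdan projection. Once a Kazhdan projection is available, the second part of Theorem~\ref{thm=local_characterization_of_Kazhdan_constant} states that an affine action with linear part in $\cG$ has a fixed point if and only if it satisfies the displayed inequality for all $x$; so the remaining content of $(\star)$ is that every such action has a fixed point, i.e. $H^1(G;\pi)=0$ for all $(\pi,E)\in\cG$. (One can also see the implication ``[Kazhdan projection $+$ $H^1=0$] $\Rightarrow$ $(\star)$'' directly: a fixed point $x_0$ of $\sigma$ gives $\sigma(g)x-x=\pi(g)(x-x_0)-(x-x_0)$ and $\sigma(m)x-x_0=\pi(m)(x-x_0)$, which turn the displayed inequality for $\sigma$ into \eqref{eq:m_locally_shrinks} for $\pi$.)

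For the main step, fix $m$ with support in a ball $B_R$ and $c<1$ witnessing $(\star)$ for $\cF$. Since $\sum_g m(g)=1$ we have $\sigma(m^k)=\sigma(m)^k$ for every affine action $\sigma$, and applying the displayed inequality successively to $\sigma(m)^{k-1}x,\dots,\sigma(m)x,x$ yields $\delta_S^\sigma(\sigma(m^k)x)\le c^k\,\delta_S^\sigma(x)$; thus $m^k$ (supported in $B_{kR}$) witnesses $(\star)$ for $\cF$ with the arbitrarily small constant $c^k$. Now the inequality $\delta_S^\sigma(\sigma(m^k)x)\le c'\,\delta_S^\sigma(x)$ is \emph{local and Lipschitz}: both sides are norms of fixed linear combinations --- with coefficients determined by $m^k$ and by the multiplication table of $B_{kR+1}$ --- of the finite configuration $\big(\sigma(g)x-x\big)_{g\in B_{kR+1}}$ of vectors, so perturbing that configuration by $\eta$ in norm (after normalising $\delta_S^\sigma(x)=1$) perturbs each side by at most $L_k\eta$ for a constant $L_k$ depending only on $\|m^k\|_1$. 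Hence $(\star)$ for $\cF$ asserts that all these configurations lie, with a uniform margin, in a fixed closed region. Choose $k$ with $c^k\le\tfrac14$; by the definition of the strong topology and the compactness/ultraproduct results of \S\ref{sec:topology}, there is a strong neighbourhood $\cF'$ of $\cF$ so small that every configuration $\big(\sigma'(g)x'-x'\big)_{g\in B_{kR+1}}$ coming from an affine action $\sigma'$ with linear part in $\cF'$ --- once normalised by $\delta_S^{\sigma'}(x')$, the case $\delta_S^{\sigma'}(x')=0$ being trivial --- lies within $\varepsilon:=(4L_k)^{-1}$ of a normalised configuration coming from an affine action with linear part in $\cF$. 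Combining with the margin and the Lipschitz estimate gives $\delta_S^{\sigma'}(\sigma'(m^k)x')\le(c^k+L_k\varepsilon)\,\delta_S^{\sigma'}(x')\le\tfrac12\,\delta_S^{\sigma'}(x')$ for all such $\sigma'$ and $x'$; so $m^k$ and $c''=\tfrac12$ witness $(\star)$ for $\cF'$, and by the first step $\cC_{\cF'}(G)$ has a Kazhdan projection and $H^1(G;\pi)=0$ for all $(\pi,E)\in\cF'$.

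The only non-bookkeeping point is the configuration approximation used above: one must know that the strong topology is fine enough that closeness of a representation to $\cF$ is inherited not merely by that representation's own orbits but by the orbits of \emph{every} affine action built on it. This is the phenomenon signalled in the introduction --- zooming in around a vector turns a representation close to $\pi$ into something resembling an affine action with linear part close to $\pi$ --- and it is the reason cohomology, rather than only the linear representation theory, enters; it is built into Definition~\ref{defn:strong_neigh} and supplied by the arguments of \S\ref{sec:topology}, after which the rest of the proof is the elementary bookkeeping above.
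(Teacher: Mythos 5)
Your first step (packaging ``Kazhdan projection $+$ $H^1=0$'' into the single affine inequality $(\star)$ via Theorem \ref{thm=local_characterization_of_Kazhdan_constant}, then iterating $m$ to make the constant small, and noting that both sides of the inequality are norms of fixed affine combinations of the configuration $\bigl(\sigma(g)x-x\bigr)_{g\in B_{kR+1}}$) is correct and is exactly the reduction the paper performs. The problem is the step you yourself flag as ``the only non-bookkeeping point'': the existence of a strong neighbourhood $\cF'$ of $\cF$ such that every normalised affine configuration over a representation in $\cF'$ is $\varepsilon$-approximable by a normalised affine configuration over $\cF$. You assert that this is ``built into Definition \ref{defn:strong_neigh} and supplied by the arguments of \S\ref{sec:topology}'', but it is neither: Definition \ref{defn:strong_neigh} only recasts ``strong neighbourhood'' in terms of accumulation points of nets, and nothing in \S\ref{sec:topology} relates the topology (which is defined via orbit norms of the \emph{linear} representations) to configurations of \emph{affine} actions built over them. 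Indeed, for ordinary neighbourhoods in this topology the transfer you invoke is false --- this is precisely the almost-invariant-vector discussion in the introduction explaining why cohomology must enter --- so the claim genuinely needs the strong-neighbourhood mechanism, and proving it is the actual content of the theorem, not bookkeeping.

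Concretely, what is missing is the contradiction argument that the paper runs (proof of Theorem \ref{thm:Kazproj}): given a net $(\pi_i,E_i)$ each carrying an affine action $\sigma_i$ and a point $x_i$ violating the inequality (normalised so $\delta_S^{\sigma_i}(x_i)=1$), one must form the cocycles $b_i(g)=\sigma_i(g)y_i-y_i$ based at (a regularisation of) $x_i$, verify pointwise boundedness and equicontinuity at $e$, apply Proposition \ref{prop:ultraproduct_affine} to build a limiting affine action on a separable invariant subspace of $E_\cU$, use Theorem \ref{thm:topology_ultraproducts} to see that the underlying representation is an accumulation point of the net, and use \eqref{eq:convergence_delta_sigma_ultraproducts} to see that the limiting action still violates the inequality, so that this accumulation point lies outside $\cF$; only then does Lemma \ref{lem:characterization_strong_neigh} yield a strong neighbourhood on which $(\star)$ holds. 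None of this appears in your proposal. (Relatedly, your choice $m\in\C[G]$ and the absence of any regularising $f_0\in C_c(G)$ would also break the equicontinuity needed for the ultraproduct of affine actions when $G$ is a non-discrete compactly generated group, the setting in which the paper claims all results; in the discrete case this particular point is harmless, but the main gap above remains.)
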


We end the introduction by listing several consequences of this result, which follow from the understanding, obtained in \S \ref{subsection:strongneig_ex}, of strong neighbourhoods in several examples. 

If $\cE$ is a collection of Banach spaces and $m$ is a function from $G$ to $(0,\infty)$, we denote by $\cF(\cE,m)$ the collection 
 of all representations $(\pi,E)$ on a space $E \in \cE$ and such that $\|\pi(g)\|_{B(E)}\leq e^{m(g)}$ for all $g$ (if $m=c\ell$ this boils down to the inequality $\max_{g \in S} \|\pi(g)\| \leq e^c$).

Here are some of the consequences. Definitions can be found in the body of the paper. Precise statements and other results can be found in \S \ref{subsection:app_fixed_point}, \ref{subsec:Lp} and \ref{subsection:compactlypresented}. 
\begin{itemize}
\item(Corollary \ref{cor:T_implies_robustT}) If $G$ has property (T), then there is $\varepsilon>0$ such that $\cC_\cF(G)$ has a central Kazhdan projection and $H^1(G;\pi)=0$ for every $(\pi,E) \in \cF$, where $\cF$ is the collection of all representations $(\pi,E)$ such that $\max_{g \in S} \|\pi(g)\|\leq 1+\varepsilon$ on a Banach spaces satisfying 
\[\frac 1 2  \left(\|x+y\|^2 + \|x-y\|^2 \right) \leq (1+\varepsilon) \left(\|x\|^2 + \|y\|^2 \right) \ \ \forall x,y \in E.\]
With the vocabulary of \cite{oppenheim} or Definition \ref{defn:robustT}, property (T) is equivalent to robust property (T) with respect to Banach spaces satisfying the preceding.
\item(Corollary \ref{cor:FLp_implies_robustFLp}) Let $1<p<\infty$. $G$ has property (F$_{L_p}$) if and only if there is $\varepsilon>0$ such that $G$ has robust property (T)  with respect to $\{L_q, |p-q|<\varepsilon\}$.\footnote{In particular, the set of values of $p \in (1,\infty)$ such that $G$ has (F$_{L_p}$) is open. Although we are not aware of a place where this remark has already been made, we are sure that this was well-known, as the proof by Fisher and Margulis of the case $p=2$ \cite[Lemma 3.1]{baderfurmangelandermonod} applies with almost no change.}
\item (Corollary \ref{cor:Flp_open_property}) Let $\cE$ be a class of superreflexive Banach spaces closed under ultraproducts (for example the class of $L_p$ spaces for some $1<p<\infty$). Then the set of finitely generated groups with property (F$_\cE$) is open in the space of marked groups.
\end{itemize}

The second point above is also valid for many other reasonable classes of Banach spaces, for example non-commutative $L_p$ spaces. See Corollary \ref{cor:Delorme-Guichardet_Banach} and \ref{cor:FE}. This statement has to be compared to a celebrated theorem by Delorme and Guichardet asserting that, for a countable group, property (T) is equivalent to (F$_H$). It is well-known that strictly speaking, the Delorme--Guichardet theorem is no longer true for Banach space and for example $L_p$ spaces for $p$ large (see Remark \ref{rem:noncentralKP}): there are groups which have (F$_{L_p}$) but not (T$_{L_p}$). Corollary  \ref{cor:FLp_implies_robustFLp}, which characterizes (F$_{L_p}$) in terms of the existence of a Kazhdan projection for some class of representations on $L_p$, should be considered as the correct Banach-space analogue of the  Delorme--Guichardet theorem.

The above results, and all other results in the paper, are valid more generally for locally compact compactly generated groups\footnote{There is a small subtlety related to continuity of representations, so the assumption that $\cE$ is stable by ultraproducts has to be replaced by stability by finite representability.}, and can be combined. For example, if $\cE$ is closed under finite representability and $G$ has (F$_\cE$), then there is a compactly presented group $G'$ which surjects on $G$, an integer $N$ and a positive number $\varepsilon>0$ such that $G$ has robust (T) with respect to the Banach space such that all $N$-dimensional subspaces are at distance less than $(1+\varepsilon)$ from a space in $\cE$. Let us finally mention a result (Corollary \ref{cor:strongTcompactly_presented}) which almost says that every group with Lafforgue's strong property (T) with respect to a reasonable class $\cE$ is a quotient of a compactly presented group with strong property (T) with respect to $\cE$.

\subsection*{Comparision with previous work}

It should be noted that a criterion similar to \eqref{eq:local_shrinkingHilbert} for property (T) was already at the heart of the work of Fisher and Margulis \cite{fishermargulis}. They also exploited that it still holds for actions (not necessarily on a Banach space) which are ``close'' to actions by isometries on Hilbert space, and that for such actions, $\pi(m)^nx$ converges to a fixed point of $\pi$, so in particular $\pi$ has a fixed point ``not too far from $x$''. This allowed them to reprove Shalom's theorem that property (T) is open in the space of marked groups. This technique also allowed them to prove that (T) implies (F$_{L_p}$) for $p$ small enough to $2$ (but this was only written in \cite{baderfurmangelandermonod}, and actually without relying on \eqref{eq:local_shrinkingHilbert}, where they allowed a less restrictive meaning of representations being closed). Compared to \cite{fishermargulis}, we make the choice to work only with linear/affine actions on Banach spaces, but we do not restrict to (close to) isometric actions, and we discuss in length the notion of closeness, and its relation to the variant Fell topology that we consider in \S \ref{sec:topology}. The characterization of Theorem \ref{thm=local_characterization_of_Kazhdan_constant} is new.

Theorem \ref{thm:Kazproj+H1_open} can be informally expressed by saying that deforming a representation for this Fell-like topology preserves the vanishing of the first cohomology group. Recently Bader and Nowak \cite{badernowak} also studied how deforming a representation affects its cohomology groups. Our results do not seem to be comparable, since they work with a much stronger notion of deformation than ours, where the Banach space is unchanged and the generators act by operators close in the norm topology from the original generators.

\section*{Acknowledgements} I thank Masato Mimura and Izhar Oppenheim for interesting discussions, and Masato Mimura for allowing me to include his argument for Lemma \ref{lem:FE_implies_uniform_spectral_gap}. I also thank the referee for useful comments.

\tableofcontents

\section{Preliminaries}

\subsection{The group $G$}\label{subsection:groupG}

Throughout this paper $G$ will be a compactly generated locally compact infinite group, and $S \subset G$ will be a compact symmetric generating set. We assume that the identity of $G$ belongs to $S$, so that (Baire) every compact subset of $G$ is contained in $S^N$ for some $N$. We denote by $\ell$ the word length function 
\begin{equation}\label{eq:word-length}
\ell(g) = \min\{n, g \in S^n\}.\end{equation} We also fix a left Haar measure, and we denote $\int f$ or $\int f(g) dg$ the integration with respect to it. 

We will also assume that $G$ is separable. This assumption is just for convience; all the results of the paper remain true if $\kappa$ is the cardinality of a dense subset of $G$ and every occurence of the word separable is replaced ``with a dense subset of cardinality $\leq \kappa$''.

By an approximate unit in $C_c(G)$ we mean a net $f_n \in C_c(G)$ such that $\int f_n=\int |f_n|=1$ and for every neighbourhood $V$ of $e$, the support of $f_n$ is contained in $V$ for all $n$ large enough.

\subsection{Representations and affine actions}
By \emph{representation} of $G$ we will always mean a pair $(\pi,E)$ of a Banach space $E$ and a strongly continuous representation $\pi$ of $G$ on $E$, \emph{i.e.} $\pi$ is group homomorphism from $G$ to the group $\mathrm{GL}(E)$ of bounded invertible operators on $E$ such that $g \mapsto \pi(g) x$ is continuous for every $x \in E$. Two representations $(\pi_1,E_1)$ and $(\pi_2,E_2)$ are said to be equivalent if there is a surjective linear isometry between $E_1$ and $E_2$ which intertwines the actions. We say that $(\pi,E)$ is an isometric representation if $\pi(g)$ is an isometry of $E$ for all $g \in G$. We will keep the word unitary representation for representations by isometries on a Hilbert space.

In \S \ref{sec:topology} and \ref{sec:applications} we will make the effort to explicitely work with sets of representations. This is a small issue because the class of all representations of a group is not a set. A solution is to consider equivalence classes of representations with some bound on the dimension of the Banach space. For example a reasonable set will be the set of equivalence classes of representations on a separable Banach space. This is indeed reasonable because, $G$ being separable, every Banach space representation is a direct limit of separable Banach space representations. We could also bound the dimension by some inaccessible cardinal, which would have the nice feature that our set of Banach spaces will be stable by the operations of duality and most ultraproducts.

The dual (or contragredient) representation of a representation $(\pi,E)$ is the representation $(\tr \pi,\tr E)$ where $\tr E$ is the closed subspace of the  $\xi \in E^*$ such that $g \mapsto \pi(g^{-1})^* \xi$ is continuous, and $\tr \pi(g)$ is the restriction of $\pi(g^{-1})^*$ to this subspace. If $E$ is reflexive then $\tr E = E^*$. In general $\tr E$ is only a weak-* dense subspace of $E^*$, but this implies that $(\pi,E)$ is naturally a subrepresentation of $(\tr{\tr \pi},\tr{\tr E})$ \cite[Lemma 2.3]{delasalle1}.

The space of invariant vectors of a representation $(\pi,E)$ is denoted $E^\pi$:
\[ E^\pi =\{x \in E, \pi(g) x=x \forall g \in G\}.\]
An important fact about isometric representations on reflexive Banach spaces is that the space $E^\pi$ has always a $\pi(G)$-invariant complement subspace \cite{baderrosendalsauer,shulman}. This is not the case for arbitrary Banach space representations. See for example \cite[Remark 2.9]{baderfurmangelandermonod} (respectively \cite{shulman}) where for every non-amenable discrete group, an example of an isometric representation is given where $E^\pi$ has no $\pi(G)$-invariant complement subspace (respectively has no complement subspace at all). There are also examples in a different direction (reflexive spaces but not isometric representations). Indeed, the dual representation of the representation constructed in \cite[Th\'eor\`eme 1.4]{lafforguestrongt} for a hyperbolic group $G$ is a representation with polynomial growth on a Hilbert space where $E^\pi$ has no $\pi(G)$-invariant complemented subspace. In a similar direction, it follows from Remark \ref{rem:noncentralKP} and Proposition \ref{prop:central} that for every hyperbolic group $G$, there exist $1<p<2$ such that, for every $\varepsilon>0$, there is a representation of $G$ on $E=L_p$ such that $\max_{s \in S}\|\pi(g)\| \leq 1+\varepsilon$ and such that $E^\pi$ has a complemented subspace but no $\pi(G)$-invariant complemented subspace.

A continuous affine action of $G$ on a Banach space $E$ is a group homomorphism $\sigma$ from $G$ to the group of continuous invertible affine maps on $E$ such that $g \mapsto \sigma(g) x$ is continuous for every $x \in E$. Since this group is isomorphic to $\mathrm{GL}(E) \ltimes E$, a continuous affine action is of the form $\sigma(g) x=\pi(g) x + b(g)$ for a representation $(\pi,E)$ of $G$ and a continuous function $b \colon G \to E$ satisfying the cocycle relation $b(gh) = b(g)+\pi(g)b(h)$ for all $g,h\in G$. Traditionally, the vector space of such continuous cocycles is denoted by $Z^1(G,\pi)$, the cocycles of the form $b(g) = x-\pi(g) x$ (which correspond exactly to the affine actions with a fixed point) are denoted by $B^1(G,\pi)$, and the quotient vector space $Z^1(G,\pi)/B^1(G,\pi)$ is denoted by $H^1(G,\pi)$. So the formula $H^1(G,\pi)=0$ means that every affine action with linear part $(\pi,E)$ has a fixed point. 

If $(\pi,E)$ is a representation of $G$, and if $m$ is a compactly supported complex measure on $G$, we will denote by $\pi(m) \in B(E)$ the operator $x \in E \mapsto \int \pi(g)x dm(g)$. If $m$ is absolutely continuous with respect to the Haar measure, we will denote $\pi(m)$ by $\pi(f)$ if $f=\frac{dm}{dg}$ is the Radon-Nikodym derivative. We will use the same notation $\sigma(m)x = \int \sigma(g) x dm(g)$ when $\sigma$ is a continuous affine action of $G$ on $X$, and $m$ is a compactly supported measure with $\int 1 dm=1$.

\subsection{Ultrafilters}

An ultrafilter on a set $I$ is a set $\cU$ of subsets of $I$ that is
closed under taking supersets, and such that for every subset $A$ of
$I$, $\mathcal U$ contains either $A$ or $I \setminus A$ (but not
both). As is standard, the set of ultrafilters on $I$ is in natural
bijection with the set of characters of $\ell_\infty(I)$: an
ultrafilter is something that chooses, for every bounded family
$(a_i)_{i \in I}$ of complex numbers, a point in the closure of
$\{a_i, i \in I\}$ in a way compatible with pointwise multiplication
and addition.

If $\cU$ is an ultrafilter on a set $I$, we denote by $(a_i)_{i \in I}
\mapsto \lim_\cU a_i$ the associated character of $\ell_\infty(I)$. It
is characterized by the fact that $A \in \cU$ if and only if $\lim_\cU
1_{i \in A} = 1$.

If $I$ is a directed set, we say that $\cU$ is cofinal if $\lim_\cU
1_{i\geq i_0}=1$ for all $i_0 \in I$. It follows by Zorn's lemma that
cofinal ultrafilters exist on every directed set.

\subsection{Strong neighbourhoods}
In a non-Hausdorff topological space, there is a difference for a net to converge to a point in a subset $A$, and for all its limit points to belong to $A$. The next lemma and the definition that follows, related to this phenomenon, will be important for us.
\begin{lem}\label{lem:characterization_strong_neigh} Let $X$ be a topological space, and $A,B \subset X$. The following are equivalent.
\begin{enumerate}
\item\label{item:strongneig0} For every net $(x_i)_{i \in I}$ in $X$ whose accumulation points are contained in $A$, there is $i_0 \in I$ such that $x_i \in B$ for all $i \geq i_0$.
\item\label{item:strongneig1} For every net $(x_i)_{i \in I}$ in $X$ such that $x_i \in B^c$ for all $i$, the net $(x_i)$ has an accumulation point in $A^c$.
\item\label{item:strongneig2} $B$ belongs to every ultrafilter on $X$ whose accumulation points are contained in $A$.
\end{enumerate}
\end{lem}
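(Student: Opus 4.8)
The plan is to prove the cycle of implications $(\ref{item:strongneig0})\Rightarrow(\ref{item:strongneig1})\Rightarrow(\ref{item:strongneig2})\Rightarrow(\ref{item:strongneig0})$, which gives all the equivalences at once. Throughout I would use the standard fact that for an ultrafilter $\mathcal U$ on $X$ a cluster point is automatically a limit, so that ``accumulation point of $\mathcal U$'' unambiguously means a point all of whose neighbourhoods belong to $\mathcal U$. The first implication is a one-line contraposition: given a net $(x_i)_{i\in I}$ with $x_i\in B^c$ for every $i$ and with no accumulation point in $A^c$, all of its accumulation points lie in $A$, so $(\ref{item:strongneig0})$ yields $i_0$ with $x_i\in B$ for $i\geq i_0$, which already contradicts $x_{i_0}\in B^c$.

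For $(\ref{item:strongneig1})\Rightarrow(\ref{item:strongneig2})$ I would argue by contradiction: assume $\mathcal U$ is an ultrafilter whose accumulation points all lie in $A$ but $B\notin\mathcal U$, hence $B^c\in\mathcal U$. Direct $\mathcal U$ by reverse inclusion, which is a directed set since ultrafilters are closed under finite intersection, and for each $U\in\mathcal U$ pick $x_U\in U\cap B^c$, possible because $U\cap B^c\in\mathcal U$ is nonempty. The net $(x_U)_{U\in\mathcal U}$ lies entirely in $B^c$, so by $(\ref{item:strongneig1})$ it has an accumulation point $x$ with $x\notin A$; then $x$ is not a limit of $\mathcal U$, so some open neighbourhood $V$ of $x$ satisfies $V\notin\mathcal U$, that is $V^c\in\mathcal U$, and for every index $U\subseteq V^c$ one gets $x_U\in V^c$. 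Thus the net is eventually outside $V$, so $x$ is not an accumulation point after all — a contradiction.

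Finally, for $(\ref{item:strongneig2})\Rightarrow(\ref{item:strongneig0})$ I would again argue by contradiction: suppose $(x_i)_{i\in I}$ has all accumulation points in $A$ but $(\ref{item:strongneig0})$ fails, so that $J=\{i\in I:x_i\in B^c\}$ is cofinal in $I$. Then $J$ is directed for the induced order, and every accumulation point of the cofinal subnet $(x_i)_{i\in J}$ is one of $(x_i)_{i\in I}$, hence lies in $A$. I would take a cofinal ultrafilter $\mathcal V$ on $J$ (available by the Zorn argument recalled above) and push it forward along $i\mapsto x_i$ to an ultrafilter $\mathcal U$ on $X$; cofinality of $\mathcal V$ forces every limit of $\mathcal U$ to be an accumulation point of $(x_i)_{i\in J}$, hence to lie in $A$. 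Then $(\ref{item:strongneig2})$ gives $B\in\mathcal U$, that is $\{i\in J:x_i\in B\}\in\mathcal V$, contradicting that this set is empty. The genuinely delicate point is this last implication — one must check that $J$ is directed, that restricting the net to $J$ creates no new accumulation points, and that pushing forward a cofinal ultrafilter keeps its limit set inside the cluster set of the net — but none of this goes beyond routine checking, and I do not anticipate a real obstacle.
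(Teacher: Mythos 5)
Your proof is correct and follows essentially the same route as the paper: the cycle $(\ref{item:strongneig0})\Rightarrow(\ref{item:strongneig1})\Rightarrow(\ref{item:strongneig2})\Rightarrow(\ref{item:strongneig0})$, with the net $(x_U)_{U\in\mathcal U}$ indexed by the ultrafilter under reverse inclusion for the second implication, and an ultrafilter containing $B^c$ together with the tails of the net for the third. The only differences are cosmetic: you phrase the last two implications by contradiction rather than by contraposition, and you build the ultrafilter in $(\ref{item:strongneig2})\Rightarrow(\ref{item:strongneig0})$ as the pushforward of a cofinal ultrafilter on the cofinal index set $J$, which is just an explicit realization of the ultrafilter the paper obtains directly from Zorn's lemma.
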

\begin{proof} (\ref{item:strongneig0}) $\implies$ (\ref{item:strongneig1}) is obvious.

Assume (\ref{item:strongneig1}). Let $\cU$ be an ultrafilter on $X$ such that $B \notin \cU$. We shall prove that $\cU$ has an accumulation point in $A^c$. Since $\cU$ is an ultrafilter, $B^c \in \cU$, and for every $C \in \cU$, there is $x_C \in C \cap B^c$. By (\ref{item:strongneig1}) the net $(x_C)_{C \in \cU}$ has an accumulation point $x \in A^c$. Then $x$ is an accumulation point of $\cU$ as requested. 

Assume (\ref{item:strongneig2}), and let $(x_i)_{i \in I}$ be a net such that for every $i_0 \in I$, there is $i \geq i_0$ such that $x_i \in B^c$. By Zorn's lemma there is an ultrafilter $\cU$ on $X$ which contains $B^c$ and $\{x_i, i \geq i_0\}$ for all $i$. In particular, $B \notin \cU$, so that by (\ref{item:strongneig2}) $\cU$ has an accumulation point $x \in A^c$. It is in particular an accumulation point of $(x_i)_{i \in I}$, which proves (\ref{item:strongneig0}).
\end{proof}

If $X$ is compact and Hausdorff, then the equivalent properties in Lemma \ref{lem:characterization_strong_neigh} are equivalent to $B$ being a neighbourhood of $A$. This justifies the following definition.
\begin{defn}\label{defn:strong_neigh} If the equivalent properties in Lemma \ref{lem:characterization_strong_neigh} are satisfied, we will say that $B$ is a strong neighbourhood of $A$.
\end{defn}
We warn the reader that a strong neighbourhood of a set $A$ does not necessarily contain $A$. For example, if $X$ contains a point $x_0$ whose only neighbourhood is $X$, then the empty set is a strong neighbourhood of $X \setminus \{x_0\}$. More generally, $x \in A$ belongs to every strong neighbourhood of $A$ if and only if $\overline{\{x\}} \subset A$.

\subsection{Ultraproducts and finite representability}
\label{subsection:ultraproducts}
We recall briefely some facts on the local theory of Banach spaces. We refer to \cite{JohnsonLindenstrauss} (in particular section 9) for a concise introduction. 

If $\cU$ is an ultrafilter on a set $I$ and $E_i,i \in I$ are Banach spaces, we denote by $\prod_\cU E_i$ the ultraproduct Banach spaces, as introduced by Dacunha-Castelle and Krivine \cite{dacunhacastellekrivine}. Recall that $\prod_\cU E_i$ is the quotient of the Banach space $\prod_I E_i$ of bounded families with values in $E_i$ for the norm $\|(x_i)\| = \sup_i \|x_i\|_{E_i}$ by the closed subspace of sequences satisfying $\lim_{\cU} \|x_i\| =0$. The equivalence class of $(x_i)_{i \in I}$ will be denoted by $(x_i)_\cU$. Its norm is $\lim_\cU \|x_i\|$. If $A_i \in B(E_i)$ are operators such that $\sup_i \|A_i\| <\infty$, its ultraproduct $\prod_\cU A_i$ is the operator sending $(x_i)_\cU$ to $(A_i x_i)_\cU$. This defines an isometric map $\prod_\cU B(E_i) \to B(\prod_\cU E_i)$, which is not surjective in general.

If $C \geq 1$, two Banach spaces $Y,Y'$ are said to be $C$-isomorphic if there is a continous and bijective linear map $u \colon Y \to Y'$ such that $\|u\| \|u^{-1}\| \leq C$. The Banach-Mazur distance (or isomorphism constant) between $Y$ and $Y'$ is the infimum of the constants $C$ such that $Y$ and $Y'$ are $C$-isomorphic. We warn the reader that the Banach-Mazur distance is sub-multiplicative. So to get a distance satisfying the usual triangle inequality, one should (but we will \emph{not}) take the log of $C$. The Banach-Mazur distance is particularilly relevant for spaces of the same finite dimension, as it typically infinite between infinite dimensional Banach spaces.

A Banach space $X$ is finitely representable in a class $\cE$ of
Banach spaces if for every finite dimensional subspace $Y$ of $X$ and
every $\varepsilon>0$, $Y$ is at Banach-Mazur distance less than
$1+\varepsilon$ from a subspace of a space in $\cE$ (that is, there is
a space $X' \in \cE$ and a linear map $u \colon Y \to E$ such that
$(1-\varepsilon) \|x\| \leq \| u(x)\| \leq \|x\|$ for all $x \in
Y$). For example, $L_p([0,1])$ is finitely presentable in $\ell_p$,
and $\ell_p$ is fintely representable in $L_p([0,1])$. Recall
\cite{dacunhacastellekrivine} or \cite{Heinrich} that $X$ is finitely
representable in $\cE$ if and only if $X$ is isometrically isomorphic
to a subspace of an ultraproduct of spaces in $\cE$. For the
convenience of the reader not familiar with the local theory of Banach
spaces, we reproduce here the standard proof of this equivalence. We
will use similar arguments later in the paper.

\begin{proof}  Assume that $X$ is finitely representable in $\mathcal E$. Denote by $I$ the set of all pairs $(\varepsilon,Y)$ for $\varepsilon >0$ and $Y$ a finite dimensional subspace of $X$. Declare that $(\varepsilon,Y)$ is larger than $(\varepsilon',Y')$ if $\varepsilon < \varepsilon'$ and $Y' \subset Y$. This is an order relation which makes $I$ into a directed set. Let $\cU$ be a cofinal ultrafilter on $I$. By assumption, for every $i \in I$, there is a Banach space $E_i \in \cE$ and a linear map $u_i \colon Y \to E_i$ such that $(1-\varepsilon) \|x\| \leq \| u_i(x)\| \leq \|x\|$ for all $x \in Y$. Extend $u_i$ to a nonlinear map $X \to E_i$ by setting $u_i(x) = 0$ if $x \notin E$. This allows to define a map $u \colon E \to \prod_\cU E_i$ by defining $u(x)$ as the class of $(u_i(x))_i$. The cofinality of $\cU$ implies that $u$ is linear and isometric.

  For the converse, assume that there is a set $I$ with ultrafilter $\cU$, a family $(X_i)_i \in I \in \cE^I$ and an isometric embedding $u$ of $X$ into $\prod_\cU X_i$. Let $Y$ be a finite dimensional subspace of $X$, and $y_1,\dots,y_n$ be a basis for $Y$. For every $k \leq n$, pick $(y_{k,i})_i \in \prod_i X_i$ a representative of $u(y_k)$. Define, for every $i$, a linear map $u_i \colon Y \to X_i$ by extending by linearity the map $y_k \mapsto y_{k,i}$. Then by linearity of $u$, for every $y \in Y$, $(u_i(y))_i$ is a representative of $u(y)$, and in particular $\lim_\cU \|u_i(y)\|=y$. By compactness the convergence in uniform in the unit ball of $y$, and in particular for every $\varepsilon>0$, there is $i$ such that
  \[ (1-\varepsilon) \|y\| \leq \|u_i(y)\| \leq (1+\varepsilon) \|y\|.\]
This proves that $X$ is finitely representable in $\mathcal E$.
\end{proof}

If $\cE$ is a class of Banach spaces, we denote by $\cE^{N,\varepsilon}$ the class of Banach spaces $E$ such that all $N$-dimensional subspaces of $E$ are at Banach-Mazur distance less than $1+\varepsilon$ from a subspace of a space in $\cE$.

We shall use the following finitary version of the well-known fact that the dual of a subspace is isometric to a quotient of the dual.
\begin{lem}\label{lem:duality_finite_representability} For every $N\in \N, \varepsilon>0$, there exists $N' \in \N,\varepsilon'>0$ such that the following holds. If $\cE$ is a class of Banach spaces and $X \in \cE^{N',\varepsilon'}$, then every subspace of $X^*$ of dimension $\leq N$ is $(1+\varepsilon)$-isometric to a subquotient (=subspace of the quotient) of the  dual of a space in $\cE$.

In particular if $\cE$ is stable under subspaces and duals, then for every Banach space $X \in \cE^{N',\varepsilon'}$, $X^* \in \cE^{N,\varepsilon}$.
\end{lem}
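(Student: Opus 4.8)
The plan is to prove the finitary duality statement by a compactness/ultraproduct argument, reducing it to the exact (infinite-dimensional) statement that the dual of a subspace is isometric to a quotient of the dual, and then quantifying via the usual perturbation bookkeeping. First I would set up a proof by contradiction: suppose the conclusion fails for some fixed $N, \varepsilon$. Then for each $n$ one can find a class $\cE_n$ of Banach spaces, a space $X_n \in \cE_n^{n,1/n}$, and an $N$-dimensional subspace $F_n \subset X_n^*$ such that $F_n$ is \emph{not} $(1+\varepsilon)$-isometric to any subquotient of the dual of a space in $\cE_n$. Fix a cofinal ultrafilter $\cU$ on $\N$ and form the ultraproduct $X = \prod_\cU X_n$. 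The key structural observation is that because $X_n \in \cE_n^{n,1/n}$ with the defect tending to $0$, every finite-dimensional subspace of $X$ is isometric to a subspace of an ultraproduct of spaces in $\bigcup_n \cE_n$; more to the point, one wants the dual side, so I would instead work with $Y = \prod_\cU X_n^*$, which embeds isometrically into $X^*$ and into which each $F_n$ includes via the diagonal.

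The heart of the argument is then: a finite-dimensional subspace $F$ of $\prod_\cU X_n^*$ is $(1+\varepsilon')$-isometric (for any prescribed $\varepsilon'>0$) to a subspace of $\prod_\cU X_{n,k}^*$ where $X_{n,k}$ ranges over finite-dimensional subspaces of the $X_n$ with controlled Banach--Mazur distance to a space in $\cE_n$ — i.e. one first pulls $F$ down to live in an ultraproduct of finite-dimensional pieces (exactly the argument reproduced in the excerpt for finite representability, applied to the duals), and then, since each such finite-dimensional piece $X_{n,k}$ is $(1+1/n)$-isomorphic to a subspace $V_{n,k}$ of a space $Z_{n,k} \in \cE_n$, its dual $X_{n,k}^*$ is $(1+1/n)$-isomorphic to a quotient of $V_{n,k}^*$, which is itself a quotient of $Z_{n,k}^*$; hence $X_{n,k}^*$ is $(1+1/n)$-isomorphic to a subquotient of $Z_{n,k}^*$. (Here I use the elementary facts that the dual of a $(1+\delta)$-isomorphism is a $(1+\delta)$-isomorphism, and that the restriction map $Z^* \to V^*$ is a metric surjection when $V \subset Z$ isometrically, by Hahn--Banach.) Passing back through the ultrafilter, for the given $F = F_n$ (diagonally embedded) and a suitable choice of $\varepsilon'$ small enough that $(1+\varepsilon')(1+1/n)^2 \le 1+\varepsilon$ for $\cU$-many $n$, we get that $F_n$ \emph{is} $(1+\varepsilon)$-isometric to a subquotient of $Z_{n}^*$ with $Z_n \in \cE_n$ for $\cU$-many $n$ — contradicting the choice of the $F_n$. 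This contradiction yields the existence of $N', \varepsilon'$ depending only on $N,\varepsilon$.

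For the ``in particular'' clause: if $\cE$ is stable under subspaces and duals, then a subquotient of the dual of a space in $\cE$ is a subspace of a quotient of a space in $\cE$; a quotient of a space in $\cE$ need not lie in $\cE$, but its dual does, and what we actually extract above is cleaner — rerunning the argument, $F_n \subset X_n^*$ of dimension $\le N$ embeds $(1+\varepsilon)$-isometrically into a subquotient of $Z^*$ with $Z \in \cE$, and a subquotient of $Z^*$ is in particular finitely representable in $\{Z^*\} \subset \cE$, so every $N$-dimensional subspace of $X^*$ is $(1+\varepsilon)$-close to a subspace of a space in $\cE$, i.e. $X^* \in \cE^{N,\varepsilon}$. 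Concretely one just feeds the first part the fact that a subquotient of $W$ is $(1+\varepsilon)$-finitely-representable in $W$ and uses $W = Z^* \in \cE$.

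The main obstacle I anticipate is purely bookkeeping rather than conceptual: one must choose the parameters in the right order — first $N'$ large and $\varepsilon'$ small relative to $N,\varepsilon$ — and carry three separate $(1+\text{small})$ distortions (the descent of $F$ to an ultraproduct of finite-dimensional spaces, the $\cE_n^{n,1/n}$-approximation of $X_{n,k}$ by a subspace of a space in $\cE_n$, and the dualization of that approximation) so that their product stays below $1+\varepsilon$. A secondary subtlety is making sure the diagonal embedding of a \emph{single} $N$-dimensional space $F_n$ into the ultraproduct behaves well: here one exploits that an $N$-dimensional normed space has, for every $\delta$, a finite $\delta$-net of the unit ball, so norms are controlled by $\cU$-many coordinates simultaneously — this is the same compactness input used in the excerpt's reproduced proof, and it is what lets the argument descend to finitely many indices and hence to a genuine $\cE_n$-space for $\cU$-many $n$.
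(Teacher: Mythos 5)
Your second half is fine and matches the paper: once an $N$-dimensional $Y\subset X^*$ is known to embed $(1+\delta)$-isomorphically into $Z^*$ for some \emph{finite-dimensional} $Z\subset X$ of \emph{controlled} dimension, one uses $X\in\cE^{N',\varepsilon'}$ to get a $(1+\varepsilon')$-embedding $Z\to X'$ with $X'\in\cE$, dualizes it (Hahn--Banach makes the restriction $X'^*\to Z^*$-type map a metric surjection), and collects the distortions. The problem is the first half, which you treat as routine. The assertion that a finite-dimensional subspace $F$ of $\prod_\cU X_n^*$ is $(1+\varepsilon')$-isometric to a subspace of $\prod_\cU X_{n,k}^*$ with $X_{n,k}\subset X_n$ finite-dimensional of controlled dimension is \emph{not} what the finite-representability pull-down argument gives, even ``applied to the duals'': that argument descends finite-dimensional subspaces of an ultraproduct to the coordinate spaces, so here it only returns $N$-dimensional subspaces of the full duals $X_n^*$ --- which is where you started --- and it provides no mechanism for replacing functionals on $X_n$ by functionals on a small subspace of $X_n$, nor any bound on the dimension of such a subspace in terms of $N,\varepsilon$ alone. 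That bound is indispensable in your contradiction scheme: the only hypothesis on $X_n$ is $X_n\in\cE_n^{n,1/n}$, which says nothing about subspaces of $X_n$ of dimension larger than $n$, so unless the norming subspace $Z_n\subset X_n$ has dimension at most a fixed $N'=N'(N,\varepsilon)$ (hence $\leq n$ for large $n$), you cannot invoke the hypothesis and no contradiction appears.

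This uniform local duality statement --- every $Y\subset X^*$ with $\dim Y\leq N$ is $(1+\delta)$-isomorphic to a subspace of $Z^*$ for some $Z\subset X$ with $\dim Z\leq N'(N,\delta)$ --- is exactly the auxiliary lemma the paper proves, and it is the real content of the result: take a $\delta$-net of the unit sphere of $Y$ of cardinality $\leq N'$ (compactness in dimension $N$), pick for each net point $y$ a unit vector $x_y\in X$ with $|\langle y,x_y\rangle|\geq 1-\delta$, and let $Z=\mathrm{span}\{x_y\}$; then the restriction map $Y\to Z^*$ is a $(1-2\delta)^{-1}$-embedding. Your proposal assumes this content without proof and attributes it to an argument that does not yield it, so as written there is a genuine gap. (Two smaller points: the ``diagonal embedding'' of a single $F_n$ into $\prod_\cU X_m^*$ is not defined, since $F_n$ lives only in $X_n^*$; you would need to work with $\prod_\cU F_n$ and use that it is $(1+o(1))$-isomorphic to $F_n$ along $\cU$. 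And once the norming lemma with its dimension bound is available, the ultraproduct scaffolding is unnecessary --- the paper concludes directly by composing the two embeddings and choosing $\varepsilon'$ so that the product of distortions is at most $1+\varepsilon$.)
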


For the proof, we shall need the following classical lemma, to which we provide a proof for the reader's convenience.
\begin{lem}
  For every $N\in \N, \varepsilon>0$, there exists $N' \in \N>0$ such that, for every Banach space $X$ and every subspace $Y \subset X^*$ of dimension $ \leq N$, there is a subspace $Z \subset X$ of dimension $\leq N'$ such that $Y^*$ is $(1+\varepsilon)$-isomorphic to a subspace of $Z^*$.
\end{lem}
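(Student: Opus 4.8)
The plan is to build $Z$ from the canonical restriction pairing and then dualize. First I would introduce the norm-one map $q\colon X\to Y^*$ given by $q(x)(y)=\langle y,x\rangle$ for $y\in Y\subset X^*$. A one-line computation identifies its adjoint: under the identification $Y^{**}=Y$, the operator $q^*\colon Y^{**}\to X^*$ is exactly the inclusion $Y\hookrightarrow X^*$, which is isometric. By the standard duality between metric injections and metric surjections, an isometric adjoint forces $q$ to be a metric surjection, so $q$ carries the open unit ball of $X$ onto that of $Y^*$.

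Next I would extract a finite dimensional witness. Fix $\delta=\delta(\varepsilon)>0$ and choose a finite $\delta$-net $\phi_1,\dots,\phi_M$ of the unit sphere of $Y^*$; since $\dim Y^*\le N$ one may take $M$ bounded by a constant $N'=N'(N,\varepsilon)$ (a volumetric estimate gives $M\le(1+2/\delta)^N$). Using that $q$ is a metric surjection, lift each $\phi_j$ to some $x_j\in X$ with $q(x_j)=\phi_j$ and $\|x_j\|\le 1+\delta$, and put $Z=\mathrm{span}(x_1,\dots,x_M)\subset X$, so $\dim Z\le N'$ independently of $X$. A routine successive-approximation argument (apply the net estimate to a given $\phi$, subtract, rescale, and sum the resulting geometric series) then promotes the net property to a genuine lifting through $Z$: every $\phi\in Y^*$ has a preimage $z\in Z$ with $q(z)=\phi$ and $\|z\|\le\frac{1+\delta}{1-\delta}\|\phi\|$. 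In other words $q|_Z\colon Z\to Y^*$ is a metric surjection up to the factor $\frac{1+\delta}{1-\delta}$.

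Dualizing this finite dimensional statement is immediate: the adjoint $(q|_Z)^*\colon (Y^*)^*\to Z^*$ is then a $\frac{1+\delta}{1-\delta}$-isomorphic embedding, and choosing $\delta$ with $\frac{1+\delta}{1-\delta}\le 1+\varepsilon$ yields the announced dependence of $N'$ on $(N,\varepsilon)$ together with a $(1+\varepsilon)$-embedding into $Z^*$.

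The one genuinely delicate point — and the step I expect to be the main obstacle — is the identification of the \emph{source} of this embedding. Since $(Y^*)^*=Y^{**}=Y$, the construction canonically produces a $(1+\varepsilon)$-embedding of $Y$ into $Z^*$, equivalently it realizes $Y^*$ as a $(1+\varepsilon)$-quotient of $Z$; the duality used throughout interchanges ``subspace'' and ``quotient'', so a metric surjection onto $Y^*$ dualizes to an embedding of $Y$, not of $Y^*$. Bridging to the conclusion literally as stated — a $(1+\varepsilon)$-embedding of $Y^*$ into $Z^*$ — would require a metric surjection $Z\twoheadrightarrow Y$, and no such map is produced by the pairing. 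This is not merely a bookkeeping issue: taking $X=\ell_\infty^3$ and $Y=X^*=\ell_1^3$ forces $\dim Z\le\dim X=3$, so the only subspace of $Z^*$ of the right dimension is $Z^*=\ell_1^3$, whereas $Y^*=\ell_\infty^3$ is not $(1+\varepsilon)$-isomorphic to $\ell_1^3$ for small $\varepsilon$. Thus the step to revisit is the formulation of the conclusion: with the embedding of $Y$ into $Z^*$ (equivalently, $Y^*$ a $(1+\varepsilon)$-quotient of $Z$) in place of an embedding of $Y^*$ into $Z^*$, the three steps above constitute a complete proof, and this is precisely the form in which the lemma is used in Lemma~\ref{lem:duality_finite_representability}.
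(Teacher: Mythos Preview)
Your proof is correct, and your diagnosis of the statement is also correct: as written, the lemma asserts an embedding of $Y^*$ into $Z^*$, but what the argument produces --- and what is actually used in Lemma~\ref{lem:duality_finite_representability} --- is an embedding of $Y$ into $Z^*$. The paper's own proof confirms this: it concludes with ``$Y$ is $(1-2\varepsilon)^{-1}$-isomorphic to a subspace of $Z^*$'', never mentioning $Y^*$. Your counterexample with $X=\ell_\infty^n$ shows the literal statement fails.

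The paper reaches the same conclusion by a shorter path. Instead of taking a net in $S_{Y^*}$, lifting through the quotient $q\colon X\to Y^*$, running a successive-approximation argument to upgrade the net lift to a genuine $\frac{1+\delta}{1-\delta}$-metric surjection $Z\twoheadrightarrow Y^*$, and then dualizing, the paper takes an $\varepsilon$-net $F$ in $S_Y$, picks for each $y\in F$ a unit vector $x_y\in X$ with $|\langle y,x_y\rangle|\geq 1-\varepsilon$, sets $Z=\mathrm{span}\{x_y:y\in F\}$, and observes directly that any $y_0\in S_Y$ (approximated by some $y\in F$) satisfies $|\langle y_0,x_y\rangle|\geq 1-2\varepsilon$, so the restriction $Y\to Z^*$ is a $(1-2\varepsilon)^{-1}$-isomorphism onto its image. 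Your detour through the metric surjection and its adjoint is entirely sound and makes the subspace/quotient duality explicit --- which is presumably why you caught the misprint --- but the paper's one-step norming argument avoids the geometric-series lift altogether.
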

\begin{proof} By compactness, there is $N'$ such that for every Banach space $Y$ of dimension $N$, its unit sphere $S_Y$ contains an $\varepsilon$-net $F$ or cardinality $\leq N'$: $S_Y \subset \cup_{x \in F} B(x,\varepsilon)$.

  Let $X$, $Y$ be as in the lemma, and $F \subset S_Y$ an
  $\varepsilon$-net as above. For every $y \in F$, let $x_y \in X$ be
  a norm one element on which $y$ almost attains its norm: $|\langle
  y,x_y\rangle | \geq (1-\varepsilon)$. Let $y_0$ in the unit sphere
  of $Y$. There is $y \in F$ such that $\|y_0-y\| \leq \varepsilon$,
  and therefore
  \[|\langle y_0,x_y\rangle| \geq  |\langle y,x_y\rangle|-| \langle y_0-y,x_y\rangle| \geq 1-2\varepsilon.\]
  In particular, if we define $Z \subset X$ as the linear span of the
  $x_y$'s, we have that the norm on $y_0$ in $Z^*$ is at least
  $(1-2\varepsilon)$. In other words, the formal inclusion
  (restriction of linear forms to $Z$) $u \colon Y \to Z^*$ satisfies
  \[(1-2\varepsilon) \|y\| \leq \|u(y)\| \leq \|y\|\]
  for every $y \in Y$, and $Y$ is $(1-2\varepsilon)^{-1}$-isomorphic to a subspace of $Z^*$. This is the Lemma, up to a change of $\varepsilon$.\end{proof}

\begin{proof}[Proof of Lemma \ref{lem:duality_finite_representability}]
  Fix $N \in \N,\varepsilon>0$, and let $N'$ be given by the preceding lemma for $\varepsilon/2$. Pick $\varepsilon'>0$ such that $(1+\varepsilon/2)(1+\varepsilon') \leq (1+\varepsilon)$.

  Let $X \in \cE^{N',\varepsilon'}$, and $Y$ be a subspace of dimension $\leq N$ of $Y^*$. The preceding lemma provides a subspace $Z \subset X$ of dimension $\leq N'$ such that $Y$ is $(1+\varepsilon/2)$-isomorphic to a subspace of $Z^*$. But by the definition of $\cE^{N',\varepsilon'}$, $Z$ is $(1+\varepsilon')$-isomorphic to a subspace $Z'$ of a space $X'$ in $\mathcal E$. This implies $Z^*$ is $(1+\varepsilon')$-isomorphic to $Z'^*$, which is a quotient of $X'^*$. Putting everything together, $Y$ is $(1+\varepsilon/2)(1+\varepsilon')$-isomorphic to a subspace of a quotient of $X'^*$. This proves the first part of the lemma.
  
  The second  part is immediate because a subquotient of the  dual of a space in $\cE$ belongs to $\cE$ if $\cE$ is stable under subspaces and duals.
\end{proof}

\subsection{Superreflexivity}
\label{subsection:superreflexivity} 
A Banach space $X$ (respectively a class $\mathcal E$ of Banach spaces) is said to be superreflexive if every Banach space finitely representable in $X$ (respectively $\mathcal E$) is reflexive. By a celebrated theorem of Enflo \cite{enflo} (see also \cite{pisier}), $X$ is superreflexive if and only if it carries an equivalent uniformly convex norm. Recall that a Banach space is said to be uniformly convex if its modulus of uniform convexity is strictly positive, and that the modulus of uniform convexity of a Banach space $X$ is the function
\[ t \in (0,1) \mapsto \inf\left\{ 1 - \| \frac{x+y}{2}\|  \mid x,y \in X,\|x\| \leq 1, \|y\|\leq 1, \|x-y\|\geq t\right\}.\] By an $\ell_2$-direct sum argument, one obtains that a class of Banach spaces is superreflexive if and only if there is a constant $C$ and of a function $\delta\colon (0,1) \to (0,1)$ such that every space in $\cE$ is $C$-isomorphic to a uniformly convex space with modulus of uniform convexity $\geq \delta$.
\section{A local characterization of (T) and its variants}

\subsection{Definition of Kazhdan projection}
Let $\cF$ be a class of representations of $G$ such that
\[ \sup_{(\pi,E) \in \cF}\|\pi(g)\|_{B(E)}\] is bounded on compact subsets of $G$.

We define $\cC_{\cF}(G)$ as the completion of $C_c(G)$ for the norm
\[ \|f\|_{\cF} = \sup\{ \|\pi(f)\|_{B(E)}, (\pi,E) \in \cF\}.\]
It is a Banach algebra for convolution. By construction, the map $f \in C_c(G) \mapsto \pi(f) \in B(E)$ extends uniquely to a bounded map on $\cC_{\cF}(G)$, that we still denote by $\pi$. For example, if $\cF$ is the class of unitary representations 
of $G$ then $\cC_{\cF}(G)$ is the full $C^*$-algebra of $G$.

\begin{defn}\label{defn:Kazhdan_proj} A Kazhdan projection in $\cC_{\cF}(G)$ is an idempotent $p$ belonging to the closure of $\{f \in C_c(G), \int f =1\}$ such that $\pi(p)$ is a projection on $E^\pi$ for every $(\pi,E) \in \cF$.

A Kazhdan projection is called central if it belongs to the center of $\cC_{\cF}(G)$.
\end{defn}
\begin{rem}\label{rem:comment_K_proj} The assumption that $p$ belongs to the closure $\{f \in C_c(G), \int f =1\}$ is just here to make a nontrivial definition in the case $E^\pi =\{0\}$ for every $(\pi,E)\in\cF$ (otherwise we could just take $p=0 \in C_c(G)$). This assumption is superfluous otherwise~: if $p \in \cC_{\cF}(G)$ is such that $\pi(p)$ is a projection on $E^\pi$ for every $(\pi,E) \in \cF$, and if $E^\pi \neq \{0\}$ for at least one $(\pi,E)$, then $p$ belongs to the closure of $\{f \in C_c(G), \int f =1\}$. Indeed, if $x\in E^\pi \setminus \{0\}$ and $f_n \in C_c(G)$ converges to $p$, we have
\[ x= \pi(p) x = \lim_n \pi(f_n) x = \lim_n (\int f_n) x,\]
so that $\lim_n \int f_n=1$, and $p = \lim_n \frac{f_n}{\int f_n}$ belongs to the closure of  $\{f \in C_c(G), \int f =1\}$.

We insist that for us Kazhdan projections are not necessarily central. One reason is that, as indicated in Proposition \ref{prop:central}, the question whether a Kazhdan projection is central is essentially disjoint from the question whether there exists a Kazhdan projection. Another reason is that there is a natural setting where non central Kazhdan projections occur naturally. See Remark \ref{rem:noncentralKP}.
\end{rem}
It is useful to realize that being a Kazhdan projection in $\cC_{\cF}(G)$ only depends on the norm $\|\cdot\|_{\cF}$, and not on the specific $\cF$.
\begin{lem}\label{lem:intrinsic_char_K_proj} An element $p \in \cC_{\cF}(G)$ is a Kazhdan projection if and only if it belongs to the closure of $\{f \in C_c(G), \int f =1\}$ and satisfies $f \ast p = (\int f) p$ for all $f \in C_c(G)$. It is a central Kazhdan projection if and only it moreover satisfies $p\ast f = (\int f) p$ for all $f \in C_c(G)$.
\end{lem}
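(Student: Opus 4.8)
The plan rests on two elementary observations about $\cC_{\cF}(G)$, which I would establish first. The first is that for every $a \in \cC_{\cF}(G)$ one has $\|a\|_{\cF} = \sup_{(\pi,E)\in\cF}\|\pi(a)\|_{B(E)}$: the inequality $\geq$ is clear, and for $\leq$ one approximates $a$ by $f_n \in C_c(G)$ and notes $\|f_n\|_{\cF} = \sup_{\pi}\|\pi(f_n)\| \leq \|f_n - a\|_{\cF} + \sup_{\pi}\|\pi(a)\|$, then lets $n\to\infty$. In particular $\bigcap_{(\pi,E)\in\cF}\ker\pi = \{0\}$, so an identity between two elements of $\cC_{\cF}(G)$ holds as soon as it holds after applying $\pi$ for every $(\pi,E)\in\cF$. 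The second is that if $(\pi,E)\in\cF$, $x \in E^{\pi}$ and $f \in C_c(G)$, then $\pi(f)x = \int f(g)\pi(g)x\,dg = (\int f)\,x$, since $x$ is fixed by all $\pi(g)$. With both in hand the argument splits along the two directions.

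For the ``only if'' direction, suppose $p$ is a Kazhdan projection. It belongs to the closure of $\{f\in C_c(G),\ \int f = 1\}$ by definition, so I only need $f\ast p = (\int f)p$ for all $f\in C_c(G)$; by the first observation it suffices to check $\pi(f)\pi(p) = (\int f)\pi(p)$ for each $(\pi,E)\in\cF$, and this is immediate from the second observation because $\pi(p)$ is a projection onto $E^{\pi}$, so $\pi(p)x \in E^{\pi}$ for every $x$. If moreover $p$ is central it commutes with every $f\in C_c(G)$, hence $p\ast f = f\ast p = (\int f)p$.

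For the ``if'' direction, assume $p$ lies in the closure of $\{f\in C_c(G),\ \int f = 1\}$ and $f\ast p = (\int f)p$ for all $f\in C_c(G)$; fix $f_n \in C_c(G)$ with $\int f_n = 1$ and $f_n \to p$. Idempotence is automatic: $p\ast p = \lim_n f_n\ast p = \lim_n (\int f_n)\,p = p$, using continuity of the convolution product. Now fix $(\pi,E)\in\cF$; then $\pi(p)$ is idempotent, hence a projection onto its range, and the task is to identify this range with $E^{\pi}$. The inclusion $E^{\pi}\subset \mathrm{ran}\,\pi(p)$ follows from the second observation: for $x\in E^{\pi}$, $\pi(p)x = \lim_n \pi(f_n)x = \lim_n (\int f_n)\,x = x$. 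For the reverse inclusion I would bring in an approximate unit $(f_\alpha)$ in $C_c(G)$: given $g\in G$, apply the hypothesis to the left translate $\lambda_g f_\alpha \colon h \mapsto f_\alpha(g^{-1}h)$, which satisfies $\int \lambda_g f_\alpha = \int f_\alpha = 1$ by left invariance of the Haar measure and $\pi(\lambda_g f_\alpha) = \pi(g)\pi(f_\alpha)$; this gives $\pi(g)\pi(f_\alpha)\pi(p)x = \pi(p)x$ for all $\alpha$, and since $\pi(f_\alpha)\pi(p)x \to \pi(p)x$ by strong continuity and $\pi(g)$ is bounded, one obtains $\pi(g)\pi(p)x = \pi(p)x$. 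Hence $\mathrm{ran}\,\pi(p) = E^{\pi}$ and $p$ is a Kazhdan projection. Finally, $p$ is central if and only if it commutes with $C_c(G)$ (by density and continuity of the product), and in the presence of $f\ast p = (\int f)p$ this is exactly the additional relation $p\ast f = (\int f)p$, which yields the last sentence.

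I expect the only genuinely non-formal point to be the inclusion $\mathrm{ran}\,\pi(p) \subset E^{\pi}$: this is where strong continuity of the representations and the translation invariance of the module relation $f\ast p = (\int f)p$ must be combined. It is worth noting in passing that one has to use left translates of an approximate unit here and cannot symmetrize the argument, precisely because $p\ast f = (\int f)p$ is not available unless $p$ is already known to be central — which is the content of the distinction made in the last sentence of the statement.
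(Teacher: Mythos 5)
Your proof is correct and follows essentially the same route as the paper: both directions reduce to the fact that $x \in E^\pi$ if and only if $\pi(f)x = (\int f)x$ for all $f \in C_c(G)$, together with the joint faithfulness of the representations in $\cF$ on $\cC_{\cF}(G)$. The paper simply states that equivalence without proof, whereas you spell out its nontrivial direction via left translates of an approximate unit; this is an expansion in detail, not a different argument.
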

\begin{proof}
For every $(\pi,E)$ in $\cF$, a vector $x \in E$ belongs to $E^\pi$ if and only if $\pi(f)x= (\int f) x$ for all $f \in C_c(G)$. Therefore, if $p$ is a Kazhdan projection, we have that $\pi(f \ast p) = \pi(f) \pi(p) = (\int f) \pi(p)$ for every $(\pi,E) \in \cF$, and hence $f \ast p = (\int f )p$. Conversely, if $p$ belongs to the closure of $\{f \in C_c(G), \int f =1\}$ and satisfies $f \ast p = (\int f) p$, then $p$ is an idempotent, and for every $(\pi,E)\in\cF$ we have that $\pi(p)$ acts as the identity on $E^\pi$ (because every $f \in C_c(G)$ with $\int f=1$ does), and its image is made of invariant vectors (because $\pi(f) \pi(p) = (\int f) \pi(p)$ for every $f \in C_c(G)$). Thus $p$ is a Kazhdan projection. The statement about central projections is immediate.
\end{proof}
\subsection{To be central or not to be}\label{sec:hamlet} We now discuss when a Kazhdan projection is central.

\begin{prop}\label{prop:central} Assume that $\cC_{\cF}(G)$ contains a Kazhdan projection $p$. The following are equivalent.
\begin{enumerate}
\item\label{item:central} $p$ is a central Kazhdan projection.
\item\label{item:invariant_complement} for every representation $(\pi,E)$ in $\cF$, $E^\pi$ has a $\pi(G)$-invariant complement subspace in $E$.
\item\label{item:only_proj} $p$ is the only Kazhdan projection in $\cC_{\cF}(G)$.
\item\label{item:exists_rightKp} There is an element $p'$ in the closure of $\{f \in C_c(G), \int f=1\}$ such that $p' \ast f= (\int f) p'$ for every $f \in C_c(G)$.
\item\label{item:exists_dualKP} $\cC_{\cF^*}(G)$ contains a Kazhdan projection.
\end{enumerate}
\end{prop}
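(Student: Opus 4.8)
The plan is to reduce everything to proving $(\ref{item:central})\Leftrightarrow(\ref{item:exists_rightKp})$, $(\ref{item:exists_rightKp})\Rightarrow(\ref{item:only_proj})$, $(\ref{item:only_proj})\Rightarrow(\ref{item:central})$, $(\ref{item:exists_rightKp})\Leftrightarrow(\ref{item:exists_dualKP})$, and $(\ref{item:central})\Leftrightarrow(\ref{item:invariant_complement})$, which together give all equivalences. Two inputs will be used repeatedly: the intrinsic description of Kazhdan projections in Lemma~\ref{lem:intrinsic_char_K_proj}, and the elementary consequence that—since that description involves only the product of $\cC_\cF(G)$, the functional $\int$, and the closure of $\{f\in C_c(G):\int f=1\}$—a Kazhdan projection $p$ has the property that $\rho(p)$ is a projection onto $V^\rho$ for \emph{every} strongly continuous representation $\rho$ of $G$ on a Banach space $V$ with $\|\rho(f)\|_{B(V)}\le C\|f\|_\cF$ for all $f\in C_c(G)$ and some $C$, not merely for $(\rho,V)\in\cF$. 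The cornerstone is a \textbf{Key Lemma}: if $p'\in\cC_\cF(G)$ lies in the closure of $\{f:\int f=1\}$ and satisfies $p'\ast f=(\int f)p'$ for all $f\in C_c(G)$, then $p'=p$ and $p$ is central. I would prove it by choosing $f_n\to p$ and $g_n\to p'$ in $\cC_\cF(G)$ with $\int f_n=\int g_n=1$: joint continuity of the product gives $p'\ast p=\lim_n p'\ast f_n=\lim_n(\int f_n)p'=p'$ and also $p'\ast p=\lim_n g_n\ast p=\lim_n(\int g_n)p=p$, so $p'=p$; then $p$ satisfies $f\ast p=(\int f)p=p\ast f$ for all $f\in C_c(G)$, and centrality follows by density. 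The same computation with any Kazhdan projection $q$ in place of $p$ gives $q=p'$, so $p$ is the unique Kazhdan projection. This already yields $(\ref{item:exists_rightKp})\Rightarrow(\ref{item:central})$, $(\ref{item:exists_rightKp})\Rightarrow(\ref{item:only_proj})$, and the trivial $(\ref{item:central})\Rightarrow(\ref{item:exists_rightKp})$ (take $p'=p$).

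For $(\ref{item:exists_rightKp})\Leftrightarrow(\ref{item:exists_dualKP})$ I would use the canonical involution $f\mapsto\check f$ of $C_c(G)$ (equal to $f(\cdot^{-1})$ when $G$ is unimodular, twisted by the modular function in general): it extends to an isometric \emph{anti}-isomorphism $\iota\colon\cC_\cF(G)\to\cC_{\cF^*}(G)$ preserving $\int$ and the closure of $\{f:\int f=1\}$, its bijectivity (with $\iota^{-1}$ the analogous map for $\cF^*$) following from the embedding $\pi\hookrightarrow\tr{\tr \pi}$, which gives $\cC_{\cF^{**}}(G)=\cC_\cF(G)$. Since $\iota$ reverses products, it interchanges the conditions ``$p'\ast f=(\int f)p'$ for all $f$'' and ``$f\ast p'=(\int f)p'$ for all $f$''; hence, by Lemma~\ref{lem:intrinsic_char_K_proj}, applying $\iota$ to a $p'$ as in (\ref{item:exists_rightKp}) produces a Kazhdan projection $\iota(p')$ of $\cC_{\cF^*}(G)$, and applying $\iota^{-1}$ to a Kazhdan projection $q$ of $\cC_{\cF^*}(G)$ produces $\iota^{-1}(q)\in\cC_\cF(G)$ satisfying (\ref{item:exists_rightKp}). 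So $(\ref{item:exists_rightKp})\Leftrightarrow(\ref{item:exists_dualKP})$.

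For $(\ref{item:central})\Leftrightarrow(\ref{item:invariant_complement})$: if $p$ is central then $\pi(p)$ commutes with every $\pi(f)$, hence—letting $f$ run over an approximate identity concentrated at $g$—with every $\pi(g)$, so $E^\pi=\operatorname{Im}\pi(p)$ has the $\pi(G)$-invariant complement $\ker\pi(p)$. Conversely, suppose each $E^\pi$ ($(\pi,E)\in\cF$) has a $\pi(G)$-invariant complement $C_\pi$. Then $\rho:=\pi|_{C_\pi}$ is a strongly continuous representation with $\|\rho(f)\|\le\|f\|_\cF$, so by the principle from the first paragraph $\pi(p)|_{C_\pi}=\rho(p)$ is a projection onto $C_\pi^\rho=C_\pi\cap E^\pi=\{0\}$, i.e.\ $C_\pi\subseteq\ker\pi(p)$; since $C_\pi$ and $\ker\pi(p)$ are both complements of $E^\pi$, this forces $\ker\pi(p)=C_\pi$, which is invariant. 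Thus $\pi(p)$ is $\pi(G)$-equivariant for every $(\pi,E)\in\cF$, so $p\ast f=f\ast p$ on $C_c(G)$ and $p$ is central by density.

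It remains to prove $(\ref{item:only_proj})\Rightarrow(\ref{item:central})$, which closes the chain. If $p=0$ then $E^\pi=\{0\}$ for all $(\pi,E)\in\cF$ and all conditions hold, so assume $p\ne0$; then $E^{\pi_0}\ne\{0\}$ for some $(\pi_0,E_0)\in\cF$, and evaluating $\pi_0(f)$ on an invariant vector shows $|\int f|\le\|f\|_\cF$, so $\int$ extends to a character $\phi$ of $\cC_\cF(G)$ with $\phi(p)=1$ and $a\ast p=\phi(a)p$ for all $a$. If $p$ were not central then $p\ast\cC_\cF(G)\ne\C p$, while $p\ast\cC_\cF(G)=\C p\oplus N$ with $N:=\{p\ast a:\phi(a)=0\}$, so $N\ne\{0\}$; but $a\ast p=\phi(a)p$ forces $N\ast N=\{0\}$, and for $n\in N\setminus\{0\}$ one checks $n^2=0$, $p\ast n=n$, $n\ast p=0$, whence $q:=p+n$ is an idempotent in the closure of $\{f:\int f=1\}$ (since $\phi(q)=1$) with $a\ast q=\phi(a)q$—by Lemma~\ref{lem:intrinsic_char_K_proj} a Kazhdan projection, and $q\ne p$, contradicting (\ref{item:only_proj}). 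I expect the two implications \emph{into} (\ref{item:central}) to be the crux: $(\ref{item:invariant_complement})\Rightarrow(\ref{item:central})$ because it requires recognizing $\pi(p)$ as the projection onto invariants on a subrepresentation not a priori in $\cF$, and $(\ref{item:only_proj})\Rightarrow(\ref{item:central})$ because it rests on the square-zero structure of the right ideal $N$, which is precisely why non-centrality produces a second Kazhdan projection.
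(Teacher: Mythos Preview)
Your proof is correct, and several of your implications are organized essentially as in the paper (notably $(\ref{item:central})\Rightarrow(\ref{item:invariant_complement})$ and $(\ref{item:exists_rightKp})\Leftrightarrow(\ref{item:exists_dualKP})$). Your Key Lemma, computing $p'\ast p$ in two ways to get $p'=p$ directly, is in fact a little cleaner than the paper's route for $(\ref{item:exists_rightKp})\Rightarrow(\ref{item:central})$, which first shows that $pp'$ is a \emph{central} Kazhdan projection and then invokes uniqueness. For $(\ref{item:invariant_complement})$, the paper goes to $(\ref{item:only_proj})$ (any Kazhdan projection must vanish on the invariant complement, hence all coincide), whereas you go to $(\ref{item:central})$; the underlying observation---that $\pi(p)$ annihilates the invariant complement---is the same.

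The one place where you take a genuinely longer road is $(\ref{item:only_proj})\Rightarrow(\ref{item:central})$. The paper's argument is a single line: for any $f\in C_c(G)$ with $\int f=1$, the element $p\ast f$ is again a Kazhdan projection (it lies in the closure of $\{h:\int h=1\}$ and satisfies $g\ast(p\ast f)=(\int g)(p\ast f)$ by Lemma~\ref{lem:intrinsic_char_K_proj}), so uniqueness forces $p\ast f=p=f\ast p$, and linearity plus density give centrality. Your construction of the square-zero ideal $N$ and the competitor $q=p+n$ is correct and is really the same phenomenon in disguise---indeed every $p+n$ with $n\in N$ is a limit of elements $p\ast f$ with $\int f=1$---but it obscures the simplicity of the step. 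One small point you left implicit: the assertion that $q$ lies in the closure of $\{f:\int f=1\}$ ``since $\phi(q)=1$'' uses that this closure equals $\phi^{-1}(1)$, which is true (the kernel of $\int$ in $C_c(G)$ is dense in $\ker\phi$ once $\phi$ is bounded) but deserves a word.
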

\begin{proof} (\ref{item:central})$\implies$(\ref{item:invariant_complement}). If $p$ is central, then $\mathrm{ker}(\pi(p))$ is a complement subspace of $E^\pi$, and it is invariant by $\pi(f)$ for every $f \in C_c(G)$ because $ \pi(f) \pi(p) = \pi(p)\pi(f)$. By strong continuity of $\pi$, $\mathrm{ker}(\pi(p))$ is therefore invariant by $\pi(g)$ for every $g \in G$.

(\ref{item:invariant_complement})$\implies$(\ref{item:only_proj}). Assume that for every $(\pi,E) \in \cF$, $E^\pi$ has a $\pi(G)$-invariant complement $F_\pi$. Then for every $f \in C_c(G)$ (and therefore for every $f \in \cC_{\cF}(G)$), $\pi(f) F_\pi \subset F_\pi$. In particular if $p'$ is a Kazhdan projection in $\cC_{\cF}(G)$, then $\pi(p') F_\pi \subset F_\pi \cap E^\pi = \{0\}$, so that $\pi(p')$ is the projection on $E^\pi$ parallel to $F_\pi$. In particular $\pi(p) = \pi(p')$, and therefore $p=p'$.

(\ref{item:only_proj})$\implies$(\ref{item:central}) If $p$ is a Kazhdan projection, then for every $f \in C_c(G)$ with $\int f=1$, $p \ast f$ is another Kazhdan projection, so that by (\ref{item:only_proj}) $p \ast f=p=f\ast p$. By linearity we deduce that $p \ast f=f\ast p$ for every $f \in C_c(G)$, and that $p$ is central.

(\ref{item:central})$\implies$(\ref{item:exists_rightKp}) is obvious (take $p'=p$). For the converse, suppose (\ref{item:exists_rightKp}). Then $pp'$ belongs to the closure of $\{f, \int f=1\}$ and satisfies $f_1 p p' f_2 = (\int f_1)  (\int f_2) pp'$, so that by Lemma \ref{lem:intrinsic_char_K_proj} $pp'$ is a central Kazhdan projection. By the already proven implication (\ref{item:central})$\implies$(\ref{item:only_proj}) for $pp'$, we deduce that $pp'$ is the only Kazhdan projection, so that $p=pp'$, and $p$ is central (\ref{item:central}).

(\ref{item:exists_rightKp})$\iff$(\ref{item:exists_dualKP}). For $f \in C_c(G)$, define $\widetilde f \in C_c(G)$ by the property that $\widetilde f dg$ is the image of the measure $fdg$ by the map $g \mapsto g^{-1}$. For every representation $(\pi,E)$ and $f \in C_c(G)$, the equality $\|\pi(f)\| = \|\tr \pi(\widetilde f)\|$ holds because $\tr \pi(\widetilde f)$ is the restriction to the weak-* dense subspace $\tr E\subset E^*$ of the weak-* continuous operator $\pi(f)^* \in B(E^*)$. The map $f \mapsto \widetilde f$ therefore extends to a surjective isometry $\cC_{\cF}(G) \to \cC_{\cF^*}(G)$. It preserves $\{f\in C_c(G), \int f=1\}$ and satisfies $\widetilde{f_1 \ast f_2} = \widetilde{f_2} \ast \widetilde {f_1}$. It is immediate by Lemma \ref{lem:intrinsic_char_K_proj} that an element $p' \in \cC_\cF(G)$ satisfies (\ref{item:exists_rightKp}) if and only if $\widetilde{p'}$ is a Kazhdan projection in $\cC_{\cF^*}(G)$.
\end{proof}

We say that $\cF$ is \emph{weakly self-adjoint} if the the norms $\|f\|_\cF$ and $\|f\|_{\cF^*}$ are equivalent on $C_c(G)$. By the proof of (\ref{item:exists_rightKp})$\iff$(\ref{item:exists_dualKP}) in the preceding proposition, $\cF$ is weakly self-adjoint if and only if the map $f \mapsto \widetilde f$ extends to a bounded map on $\cC_{\cF}(G)$ that we still denote by $\widetilde{\cdot}$. An element $a \in \cC_\cF(G)$ is then called self-adjoint if $\widetilde a=a$. For example, $\cF$ is weakly self-adjoint if $\cF^* \subset \cF$.

\begin{cor}\label{cor:KP_automatically_central} If $\cF$ is weakly self-adjoint, a Kazhdan projection in $\cC_{\cF}(G)$ is automatically central and self-adjoint.

If moreover the map $f \mapsto \overline{f}$ extends to a continuous map on $\cC_{\cF}(G)$, then a Kazhdan projection is automatically central, self-adjoint and real.
\end{cor}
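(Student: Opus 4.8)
The plan is to run everything through Lemma~\ref{lem:intrinsic_char_K_proj} and Proposition~\ref{prop:central}, applied to the images of a Kazhdan projection $p$ under the two natural symmetries of $\cC_\cF(G)$: the transpose $f\mapsto\widetilde f$ and, for the second statement, complex conjugation $f\mapsto\overline f$.

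First I would record the algebraic features of these maps on $\cC_\cF(G)$. Since $\cF$ is weakly self-adjoint, $f\mapsto\widetilde f$ extends to a bounded linear involution of $\cC_\cF(G)$; it is \emph{anti}-multiplicative (by continuity from $\widetilde{f_1\ast f_2}=\widetilde{f_2}\ast\widetilde{f_1}$ on $C_c(G)$) and it preserves the affine subspace $\{f\in C_c(G),\ \int f=1\}$ because $\int\widetilde f=\int f$. Likewise, under the extra hypothesis of the second part, $f\mapsto\overline f$ extends to an anti-linear involution of $\cC_\cF(G)$ that is \emph{multiplicative} (from $\overline{f_1\ast f_2}=\overline{f_1}\ast\overline{f_2}$) and preserves $\{f\in C_c(G),\ \int f=1\}$ since $\overline{\int f}=\int\overline f$.

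Now let $p$ be a Kazhdan projection. By Lemma~\ref{lem:intrinsic_char_K_proj}, $p$ lies in the closure of $\{f\in C_c(G),\ \int f=1\}$ and $f\ast p=(\int f)\,p$ for all $f\in C_c(G)$. Applying $f\mapsto\widetilde f$ to this identity, using anti-multiplicativity and the bijectivity of $f\mapsto\widetilde f$ on $C_c(G)$, I get $\widetilde p\ast f=(\int f)\,\widetilde p$ for all $f$; and transporting a sequence $f_n\to p$ with $\int f_n=1$ shows $\widetilde p$ is still in the closure of $\{f\in C_c(G),\ \int f=1\}$. Thus $p'=\widetilde p$ witnesses condition~(\ref{item:exists_rightKp}) of Proposition~\ref{prop:central}, so $p$ is central. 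Being central, $p$ also satisfies $p\ast f=(\int f)\,p$; applying $f\mapsto\widetilde f$ once more gives $f\ast\widetilde p=(\int f)\,\widetilde p$, so by Lemma~\ref{lem:intrinsic_char_K_proj} the element $\widetilde p$ is itself a (central) Kazhdan projection. By the implication (\ref{item:central})$\implies$(\ref{item:only_proj}) of Proposition~\ref{prop:central}, $p$ is the unique Kazhdan projection, whence $\widetilde p=p$: $p$ is self-adjoint. For the second part I would argue identically with $f\mapsto\overline f$ in place of $f\mapsto\widetilde f$: conjugating $f\ast p=(\int f)\,p$ and $p\ast f=(\int f)\,p$ gives $f\ast\overline p=(\int f)\,\overline p$ and $\overline p\ast f=(\int f)\,\overline p$, and $\overline p$ remains in the closure of $\{f\in C_c(G),\ \int f=1\}$, so $\overline p$ is a Kazhdan projection and, by uniqueness, $\overline p=p$, i.e. $p$ is real.

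No step here is deep: the substance is already contained in Lemma~\ref{lem:intrinsic_char_K_proj} and Proposition~\ref{prop:central}. The only points needing care are the bookkeeping of order-reversal for $f\mapsto\widetilde f$ versus order-preservation for $f\mapsto\overline f$, and verifying at each stage that the transformed element still lies in the closure of $\{f\in C_c(G),\ \int f=1\}$ — precisely the hypothesis one must not drop when invoking Lemma~\ref{lem:intrinsic_char_K_proj}.
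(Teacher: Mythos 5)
Your proof is correct and follows essentially the same route as the paper: apply the transpose map to the identity $f\ast p=(\int f)p$ to exhibit $\widetilde p$ as a witness of condition (\ref{item:exists_rightKp}) in Proposition \ref{prop:central}, conclude centrality, then use uniqueness of the Kazhdan projection to get $p=\widetilde p$, and repeat with $f\mapsto\overline f$ for realness. Your write-up merely spells out the bookkeeping (anti-multiplicativity, preservation of $\{f\in C_c(G),\ \int f=1\}$) that the paper leaves implicit.
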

\begin{proof}

Let $p \in \cC_\cF(G)$ a Kazhdan projection. Then $\widetilde p$ satisfies (\ref{item:exists_rightKp}) in the previous proposition, so $p$ is central (\ref{item:central}). Then $\widetilde p$ is another central Kazhdan projection, so by the previous proposition again $p=\widetilde p$ and $p$ is self-adjoint.

If moreover $f \mapsto \overline{f}$ extends by continuity to $\cC_\cF(G)$ then $\overline p$ is also a Kazhdan projection so by the previous proposition again $p=\overline p$ and $p$ is real.
\end{proof}

\subsection{Examples of Kazhdan projections}\label{subsection:examples}
The first example is for unitary representations~: it is classical that $G$ has property (T) if and only if the full $C^*$-algebra of $G$ has a Kazhdan projection. More generally, if $\cF$ is a class of isometric representations on a superreflexive set of Banach spaces, then $\cC_{\cF}(G)$ has a (necessarily central by Proposition \ref{prop:central} (\ref{item:invariant_complement})) Kazhdan projection if and only if there exists $\varepsilon>0$ such that $\max_{g \in S}\|\pi(g) x-x\| \geq \varepsilon \|x\|_{E/E^\pi}$ for every $(\pi,E) \in \cF$ and $x \in E$, \emph{i.e.} if and only if $E/E^\pi$ does not have almost invariant vectors uniformly in $(\pi,E) \in \cF$. See \cite[Theorem 1.2]{drutunowak} (the argument in the case of a discrete group was previously recorded in \cite[Proposition 5.1]{delaatdelasalle2}).

Let $(\pi,E)$ be a Banach space representation of $G$. An argument of Guichardet, originally used for unitary representations but valid for arbitrary Banach space representations, shows that if $H^1(G;\pi)=0$ then $E/E^\pi$ does not have almost invariant vectors\footnote{The argument goes as follows~: if $H^1(G;\pi)=0$, the map $x \in E/E^\pi \mapsto (\pi(g) x-x)_{g \in G}$ with values in the Banach space $Z^1(G;\pi)$ with the norm $\sup_{g \in S}\|b(g)\|$ is continuous and bijective, and hence invertible by the open mapping theorem. If $C$ is the norm of the inverse we have $\delta_S^\pi(x) \geq C^{-1} \|x\|_{E/E^\pi}$ for every $x \in E$.}. We shall see (this was observed by Masato Mimura \cite{mimura}) by some ultraproduct argument that in several case, this holds uniformly in $(\pi,E)$ (Lemma \ref{lem:FE_implies_uniform_spectral_gap}). In particular, in \S \ref{subsection:fixed_point_KP} we will deduce the following result. 
\begin{prop}\label{prop:FE_implies_KazhdanProj} Let $\cE$ be a class of superreflexive Banach spaces, and denote by $\cF$ all isometric representations of $G$ on a space in $\cE$. 
If $G$ has (F$_{\cE}$) then $\cC_{\cF}(G)$ has a Kazhdan projection in the following situations:
\begin{itemize}
\item $\cE$ is stable by finite representability,
\item or $\cE$ is the class of $L_p$-spaces for some $1<p<\infty$,
\item or $G$ is discrete and $\cE$ is stable under ultraproducts.
\end{itemize}
\end{prop}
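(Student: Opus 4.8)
The plan is to deduce Proposition~\ref{prop:FE_implies_KazhdanProj} from the local characterization of Theorem~\ref{thm=local_characterization_of_Kazhdan_constant} together with a uniform spectral gap statement obtained by an ultraproduct argument. Concretely, for $\cE$ a class of superreflexive Banach spaces and $\cF$ the isometric representations of $G$ on spaces in $\cE$, the first step is to establish the following (this is Mimura's Lemma~\ref{lem:FE_implies_uniform_spectral_gap}): if $G$ has (F$_\cE$), then there is $\varepsilon>0$ such that $\delta_S^\pi(x)\geq \varepsilon\,\|x\|_{E/E^\pi}$ for every $(\pi,E)\in\cF$ and every $x\in E$. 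Given this, by the already-recalled characterization in \S\ref{subsection:examples} (or \cite[Theorem 1.2]{drutunowak}), $\cC_\cF(G)$ has a Kazhdan projection; alternatively one can feed the uniform gap directly into Theorem~\ref{thm=local_characterization_of_Kazhdan_constant}, as I sketch below.

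For the uniform spectral gap, I would argue by contradiction via ultraproducts. Suppose no such $\varepsilon$ works: pick $(\pi_n,E_n)\in\cF$ and unit vectors $x_n\in E_n$ with $\|x_n\|_{E_n/E_n^{\pi_n}}=1$ (after rescaling) but $\delta_S^{\pi_n}(x_n)\to 0$. Fix a nonprincipal ultrafilter $\cU$ on $\N$ and form the ultraproduct $E=\prod_\cU E_n$. The operators $\pi_n(g)$ are isometries with $\sup_n\|\pi_n(g)\|=1$ on compacts, so $\pi(g)=\prod_\cU\pi_n(g)$ defines an isometric action of $G$ on $E$ — but here one must be careful about strong continuity: in the locally compact case $g\mapsto\pi(g)(x_n)_\cU$ need not be continuous, which is exactly why the hypotheses split into three cases. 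One restricts to the closed subspace $E_c\subset E$ of vectors on which the $G$-action is continuous (this is where separability/$\sigma$-compactness of $G$ is used, and where $L_p$ or discreteness or stability under finite representability guarantees $E_c$, or a suitable renormed version of it, still lies in the class finitely representable in $\cE$, hence is superreflexive and carries a fixed point by (F$_\cE$)). The key point is then: the vector $\bar x=(x_n)_\cU$ has $\delta_S^\pi(\bar x)=\lim_\cU\delta_S^{\pi_n}(x_n)=0$, so $\bar x\in E^\pi$; but $\|\bar x\|_{E/E^\pi}\geq\lim_\cU\|x_n\|_{E_n/E_n^{\pi_n}}=1$ would be a contradiction — the subtlety being that the norm on $E/E^\pi$ need not dominate the ultraproduct of the quotient norms, so one instead picks $\xi_n\in(E_n/E_n^{\pi_n})^*=(E_n^{\pi_n})^\perp$ of norm one with $\langle\xi_n,x_n\rangle\geq 1/2$, forms $\bar\xi=(\xi_n)_\cU$ in the dual ultraproduct, checks it is $\pi^*$-invariant (again using the gap $\delta_S^{\pi_n}(x_n)\to 0$ and isometry to control $\|\pi_n(g)^*\xi_n-\xi_n\|$), and then $\langle\bar\xi,\bar x\rangle\geq 1/2$ contradicts $\bar x\in E^\pi$ paired against an invariant functional that annihilates... — so in fact the cleaner route is: build from $\bar\xi$ and $\bar x$ an unbounded affine cocycle. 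That is the standard Shalom/Guichardet trick: almost-invariant-but-not-invariant vectors in a superreflexive space produce, via a renormalization and ultraproduct, a nontrivial reduced $1$-cocycle, contradicting (F$_\cE$). I would carry out this last reduction carefully, as it is the technical heart.

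With the uniform gap $\delta_S^\pi(x)\geq\varepsilon\|x\|_{E/E^\pi}$ in hand, to get the Kazhdan projection from Theorem~\ref{thm=local_characterization_of_Kazhdan_constant} I need $m\in\C[G]$ (in the locally compact case, $m\in C_c(G)$ with $\int m=1$) with $\delta_S^\pi(\pi(m)x)\leq\frac12\delta_S^\pi(x)$ for all $(\pi,E)\in\cF$, $x\in E$. Since the representations are isometric, for any $f\in C_c(G)$ with $\int f=1$ one has $\delta_S^\pi(\pi(f)x)=\max_{s\in S}\|\pi(f)(\pi(s)x-x)\|\leq\|\pi(f)\|_{B(E)}\cdot(\text{something})$ is not quite enough directly; instead I would take $f$ supported near $e$ in an approximate-unit fashion combined with the spectral-gap averaging: the standard argument is that $\|\pi(f)x\|\leq(1-c\varepsilon^2)\|x\|$ for $x\in E^\pi{}^\perp$-ish when $f$ is a suitable symmetric average, which by isometry of the $\pi(s)$ translates into the required contraction of $\delta_S^\pi$ after possibly iterating $f$ a bounded number of times. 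Then Theorem~\ref{thm=local_characterization_of_Kazhdan_constant} immediately yields the Kazhdan projection.

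The main obstacle, as flagged above, is the continuity/renorming bookkeeping in the ultraproduct step for locally compact $G$ — ensuring the ultraproduct representation, after passing to the subspace of continuous vectors and an equivalent uniformly convex renorming, still lands in a class to which the hypothesis (F$_\cE$) applies. This is precisely why the statement is split into the three cases ($\cE$ stable under finite representability; $\cE=L_p$; $G$ discrete with $\cE$ stable under ultraproducts): in each of them the relevant closure operation (finite representability, the classification of $L_p$-ultraproducts, or ordinary ultraproducts in the discrete case where no continuity issue arises) keeps us inside the superreflexive class with a fixed-point property, and the contradiction goes through. Everything else — the cocycle construction and the passage from uniform gap to the averaging measure $m$ — is routine and parallel to the Hilbert-space computation in the introduction.
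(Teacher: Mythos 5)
Your proposal follows the paper's own route: establish a uniform Kazhdan constant by the ultraproduct/fixed-point argument of Mimura (the paper's Lemma \ref{lem:FE_implies_uniform_spectral_gap}, applied to the closed families $\cF$, $\cG(\cE,0)$ or $\cG(L_p,0)$ supplied by Remark \ref{rem:closure_implies_fin_rep+bound} and Propositions \ref{prop:strong_neigh_discrete} and \ref{prop:strong_neigh_Lp} --- exactly how the three case hypotheses enter), and then invoke the Dru\c{t}u--Nowak characterization for isometric representations on superreflexive spaces. The ``technical heart'' you defer is carried out in Lemma \ref{lem:FE_implies_uniform_spectral_gap} precisely via the renormalized-coboundary argument you settle on at the end (normalize $\delta_S^{\pi_n}(x_n)=1$ with $\|x_n\|_{E_n/E_n^{\pi_n}}\to\infty$, regularize by $f_0$ to get equicontinuous coboundaries, pass to the ultraproduct affine action and use its fixed point to contradict the optimal constants), not via the dual-functional pairing you first attempt; note also that your optional alternative of extracting the averaging measure $m$ directly from the gap would amount to re-proving the Dru\c{t}u--Nowak step, and its orthogonal-complement-style sketch does not work verbatim in general superreflexive spaces.
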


Finally, examples where Kazhdan projections occur are in the definitions of Banach strong property (T) and its variant robust property (T). Let us recall the definitions.

If $\cE$ is a class of Banach spaces and $m \colon G \to (0,\infty]$ is a function, we denote $\cF({\cE},m)$ all equivalence classes of representations $(\pi,E)$ such that $E\in \cE$ and $\|\pi(g)\|\leq e^{m(g)}$ for all $g$.

Vincent Lafforgue's strong property (T) \cite{lafforguestrongt} was defined in terms of Kazhdan projection. 
\begin{defn}(Lafforgue) If $\cE$ is a class of Banach spaces, one says that $G$ has strong property (T) with respect to $\cE$ is there there is $s>0$ such that for all $C>0$, $\cC_{\cF(\cE,s\ell +C)}(G)$ has a Kazhdan projection.
\end{defn}
Lafforgue originally only considered the case when $\cE$ is stable by duality, subspaces and complex conjugation and wanted the Kazhdan projection to be self-adjoint and real, but in that case the two definitions coincide by Corollary \ref{cor:KP_automatically_central}.

Oppenheim \cite{oppenheim} defined robust property (T) with respect to a class of Banach spaces $\cE$, as an intermediate property between property (T) and strong property (T) with respect to $\cE$. An almost
\footnote{In his original definition, Oppenheim requests, as in \cite{delasalle1}, that the Kazhdan projection belong to the closure of the symmetric functions on $G$. In view of Corollary \ref{cor:KP_automatically_central}, this is automatic if $\cF(\cE,c\ell)$ is weakly self-adjoint. This does not seem any more so relevant otherwise, so we prefer to drop this condition.} equivalent form of his definition is the following.
\begin{defn}(Oppenheim)\label{defn:robustT} $G$ has robust property (T) with respect to $\cE$ if there exists $s>0$ such that $\cC_{\cF(\cE,c\ell)}(G)$ has a Kazhdan projection.
\end{defn}
This is equivalent to the Kazhdan projection being central when $\cF(\cE,c\ell)$ is weakly self-adjoint, for example if $\cE$ is stable by duality and made of reflexive spaces.

\subsection{The main theorem} If $(\pi,E)$ is a representation of $G$, for every $x \in X$ we denote 
\begin{equation}\label{eq:def_deltaS} \delta_S^\pi(x) =\max_{g \in S} \|\pi(g) x - x\|.\end{equation}
Observe that by the triangle inequality, if $g \in S^N$ then 
\begin{equation}\label{ineq_delta} \| \pi(g) x - x \| \leq N \sup_{g \in S} \|\pi(g)\|^{N-1} \delta_S^\pi(x).\end{equation}
We will also consider the quantity $\delta_S^\sigma(x) =\max_{g \in Q} \|\sigma(g) x - x\|$ when $\sigma$ is an affine action of $G$ on $E$ with linear part $\pi$, and in that case \eqref{ineq_delta} still holds in the form  $\| \sigma(g) x - x \| \leq N \sup_{g \in S} \|\pi(g)\|^{N-1} \delta_S^\sigma(x)$.

The following is Theorem \ref{thm=local_characterization_of_Kazhdan_constant} generalized to arbitrary locally compact groups.
\begin{thm}\label{thm=local_characterization_of_Kazhdan_constant_nondiscrete} The following are equivalent~:

\begin{enumerate}
\item\label{item=kazhdan} $\cC_{\cF}(G)$ contains a Kazhdan projection.
\item\label{item=local_shrinking} There is a compactly supported measure $m$ with $\int 1 dm=1$ such that $\delta_S^\pi(\pi(m) x) \leq \frac 1 2 \delta_S^\pi( x)$ for all $(\pi,E)\in \cF$ and $x \in E$.
\end{enumerate}

If these properties hold and if $(\sigma,E)$ is an affine action of $G$ whose linear part belongs to $\cF$, then $\sigma$ has a fixed point if and only if $\delta_S^\sigma(\sigma(m) x) \leq \frac 1 2\delta_S^\sigma(x)$ for all $x \in E$.
\end{thm}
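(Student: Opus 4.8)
The plan is to prove the two implications (Kazhdan projection $\iff$ local shrinking) together with the fixed-point criterion in one package, since the same averaging trick drives all three.

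First I would establish $(\ref{item=kazhdan}) \implies (\ref{item=local_shrinking})$. Given a Kazhdan projection $p \in \cC_\cF(G)$, pick $f \in C_c(G)$ with $\int f = 1$ close to $p$, say $\|f - p\|_\cF < \eta$ for a small $\eta$ to be fixed. For $(\pi,E) \in \cF$ and $x \in E$, decompose $\pi(f)x = \pi(p)x + (\pi(f)-\pi(p))x$, where $\pi(p)x \in E^\pi$ is fixed by every $\pi(s)$. Hence $\pi(s)\pi(f)x - \pi(f)x = \pi(s)(\pi(f)-\pi(p))x - (\pi(f)-\pi(p))x$, and using \eqref{ineq_delta}-type bounds together with $\|\pi(f)-\pi(p)\|_{B(E)} \le \eta$ we control $\delta_S^\pi(\pi(f)x)$ by a constant times $\eta$ times $\|x\|$. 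The subtlety is that the right-hand side we want is $\frac12 \delta_S^\pi(x)$, not $\frac12\|x\|$; so I would instead apply this to $x - \pi(p)x$ (on which $\delta_S^\pi$ and the ambient seminorm $\|x\|_{E/E^\pi}$ agree up to a factor depending on $\sup_{s\in S}\|\pi(s)\|_\cF$), after first using the Kazhdan projection to see that $E/E^\pi$ has no uniformly almost-invariant vectors — i.e. $\delta_S^\pi(x) \ge c\,\|x\|_{E/E^\pi}$ for a uniform $c>0$. Then for $\eta$ small enough the measure $m = f$ works. (Alternatively, and perhaps more cleanly, one shows $\pi(p^n)\to$ works with $m$ built from $p$ directly; but the above is elementary.)

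Next, $(\ref{item=local_shrinking}) \implies (\ref{item=kazhdan})$. Iterating \eqref{eq:m_locally_shrinks} gives $\delta_S^\pi(\pi(m)^n x) \le 2^{-n}\delta_S^\pi(x)$ for all $n$, uniformly over $\cF$. I would first argue that $m^{*n}$ is Cauchy in $\cC_\cF(G)$: for any $(\pi,E)\in\cF$ and $x\in E$, $\pi(m)^{n+1}x - \pi(m)^n x = \pi(m)^n(\pi(m)x - x)$; writing $\pi(m)x - x = \int(\pi(g)x - x)\,dm(g)$ and using \eqref{ineq_delta} (the support of $m$ lies in some $S^N$) bounds its norm by a constant times $\delta_S^\pi(x) \le (\text{const})\,\delta_S^\pi(\pi(m)^{n-1}\cdots)$, which decays geometrically; summing a geometric series shows $(\pi(m)^n x)$ converges, and taking sup over $x$ in the unit ball and over $\cF$ shows $m^{*n}$ converges in $\cC_\cF(G)$ to some $p$. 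By construction $p$ is in the closure of $\{f : \int f = 1\}$ (each $m^{*n}$ has total mass $1$, and one can convolve with an approximate unit to land in $C_c(G)$ without changing the limit). Then $p$ is idempotent: $p * p = \lim_n m^{*n} * p = \lim_n \lim_k m^{*(n+k)} = p$. Finally $\pi(p) = \lim_n \pi(m)^n$ and the geometric decay of $\delta_S^\pi(\pi(m)^n x)$ forces $\delta_S^\pi(\pi(p)x) = 0$, i.e. $\pi(p)x \in E^\pi$, while $\pi(p)$ fixes $E^\pi$ pointwise (each $\pi(m)$ does, since $\int 1\,dm = 1$); so $\pi(p)$ is a projection onto $E^\pi$, and $p$ is a Kazhdan projection. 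This also shows $E^\pi$ is complemented (by $\ker\pi(p)$), the point flagged after the theorem.

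For the fixed-point statement, suppose $(\sigma,E)$ is an affine action with linear part $\pi \in \cF$. If $\sigma$ has a fixed point $x_0$, translate to assume $x_0 = 0$, so $\sigma = \pi$ and \eqref{eq:m_locally_shrinks} for $\pi$ gives exactly $\delta_S^\sigma(\sigma(m)x) \le \frac12\delta_S^\sigma(x)$. Conversely, if this inequality holds for $\sigma$, the same iteration as above — now with the affine operator $\sigma(m)$, which still satisfies $\sigma(m)x - x = \int(\sigma(g)x - x)\,dm(g)$ and the affine form of \eqref{ineq_delta} — shows $(\sigma(m)^n x)$ is Cauchy in $E$, hence converges to some $y$ with $\delta_S^\sigma(y) = 0$, i.e. $\sigma(s)y = y$ for all $s \in S$, and since $S$ generates $G$, $y$ is a fixed point of $\sigma$. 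The main obstacle, and the place requiring the most care, is the first implication: getting the right-hand side to be $\frac12\delta_S^\pi(x)$ rather than $\frac12\|x\|$, which is exactly where one must invoke the absence of uniform almost-invariant vectors in $E/E^\pi$ — a consequence of, not an extra hypothesis alongside, the existence of the Kazhdan projection.
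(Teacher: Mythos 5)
Your proposal is correct and follows essentially the same route as the paper: for (\ref{item=kazhdan})$\implies$(\ref{item=local_shrinking}) approximate the projection $p$ by some $f\in C_c(G)$ with $\int f=1$, bound $\delta_S^\pi(\pi(f)x)$ by a small multiple of $\|x\|_{E/E^\pi}$ and invoke the uniform lower bound $\delta_S^\pi(x)\geq c\|x\|_{E/E^\pi}$; for (\ref{item=local_shrinking})$\implies$(\ref{item=kazhdan}) and the fixed-point claim, iterate $m$ to get geometric convergence of $\pi(m)^n$ (resp.\ $\sigma(m^{\ast n})x$) and repair the fact that $m^{\ast n}$ need not lie in $C_c(G)$ by convolving with a function of integral $1$, exactly as in the paper. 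The one ingredient you assert without proof, the uniform bound $\delta_S^\pi(x)\geq c\|x\|_{E/E^\pi}$, is precisely the paper's Lemma \ref{lem=kazhdan_implies_noaivector}, whose short argument (take $f$ with $\|f-p\|_{\cF}\leq \tfrac12$, estimate $\|x\|_{E/E^\pi}\leq \|x-\pi(f)x\|+\tfrac12\|x+y\|$ for $y\in E^\pi$, and conclude with Lemma \ref{lem=trivialite}) should be included to make the proof complete.
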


We first record an easy fact on the displacement \eqref{eq:def_deltaS}, that we will often use.
\begin{lem}\label{lem=trivialite} Let $m$ be a compactly supported measure on $G$ with $\int 1 dm =1$. There is a constant $C_m$ such that 
\[ \| \sigma(m) x - x \|_E \leq C_m \delta_S^\sigma(x)\]
for every affine action $\sigma$ of $G$ with linear part $(\pi,E)\in \cF$ and every $x \in E$.
\end{lem}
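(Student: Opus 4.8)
The plan is to reduce the statement to the elementary displacement bound \eqref{ineq_delta} (in its affine form) by writing the averaged displacement as an integral. First I would use the defining formula $\sigma(m)x = \int \sigma(g)x\, dm(g)$ from the preliminaries together with the normalization $\int 1\, dm = 1$ to subtract $x = \int x\, dm(g)$ inside the integral, obtaining
\[ \sigma(m)x - x = \int \bigl(\sigma(g)x - x\bigr)\, dm(g). \]
Applying the triangle inequality for vector-valued integrals then gives
\[ \|\sigma(m)x - x\|_E \leq \int \|\sigma(g)x - x\|_E \, d|m|(g), \]
which shifts the problem to a pointwise estimate of the integrand on the support of $m$.

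Next I would control $\|\sigma(g)x - x\|_E$ uniformly on that support. Since $m$ is compactly supported and $S$ is a compact generating set containing the identity, the support of $m$ is contained in $S^N$ for some integer $N$ (the Baire remark of \S\ref{subsection:groupG}). For $g \in S^N$ the affine version of \eqref{ineq_delta} recorded just before the lemma yields $\|\sigma(g)x - x\|_E \leq N \sup_{s\in S}\|\pi(s)\|^{\,N-1}\, \delta_S^\sigma(x)$. The one point deserving care is that the factor $\sup_{s\in S}\|\pi(s)\|$ a priori depends on the linear part $\pi$, whereas the constant $C_m$ is required to be uniform over the whole family $\cF$. This is exactly what the standing hypothesis on $\cF$ supplies: $\sup_{(\pi,E)\in\cF}\|\pi(g)\|_{B(E)}$ is bounded on compact subsets of $G$, so $L := \sup_{(\pi,E)\in\cF}\sup_{s\in S}\|\pi(s)\|_{B(E)}$ is finite, and $L \geq 1$ because $e \in S$ forces $\pi(e) = \mathrm{id}$. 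Hence $\|\sigma(g)x - x\|_E \leq N L^{N-1}\,\delta_S^\sigma(x)$ for every $g$ in the support of $m$ and every affine action whose linear part lies in $\cF$.

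Combining the two steps gives
\[ \|\sigma(m)x - x\|_E \leq N L^{N-1}\, \delta_S^\sigma(x)\int d|m| = N L^{N-1}\, |m|(G)\, \delta_S^\sigma(x), \]
so that $C_m := N L^{N-1}\, |m|(G)$ works; it depends only on $m$ (through $N$ and its total variation $|m|(G)$) and on the fixed family $\cF$ (through $L$), as required. I do not expect any genuine obstacle here: the argument is a routine combination of the triangle inequality with the already-established affine form of \eqref{ineq_delta}. The only subtlety, flagged above, is the passage from a representation-dependent bound to one uniform across $\cF$, which is precisely the purpose of the boundedness assumption \eqref{eq:unif_bound} on $\cF$.
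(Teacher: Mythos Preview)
Your proof is correct and follows essentially the same route as the paper: write $\sigma(m)x - x$ as an integral, apply the triangle inequality, bound each $\|\sigma(g)x - x\|$ via the affine form of \eqref{ineq_delta} using that the support of $m$ lies in some $S^N$, and invoke the standing boundedness assumption on $\cF$ to make the constant uniform. The resulting constant $C_m = N L^{N-1}\,|m|(G)$ is exactly the one the paper obtains.
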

\begin{proof} Let $N$ be such that the support of $m$ is contained in $S^N$, and $\|m\|_{TV}$ the total variation norm of $m$. Then \eqref{ineq_delta} implies
\[ \| \sigma(m) x - x \|_E \leq \int \|\sigma(g) x - x \|_E |dm| \leq \|m\|_{TV} N \sup_{g \in S} \|\pi(g)\|^{N-1} \delta_S^\pi(x).\]
This proves the lemma because $\sup_{g \in S} \|\pi(g)\|^{N-1}$ is bounded independantly of $(\pi,E)\in \cF$.
\end{proof}

For the proof of the direction (\ref{item=kazhdan})$\implies$(\ref{item=local_shrinking}) we will need a uniform version of the fact that the existence of a Kazhdan projection for $(\pi,E)$ implies that $(\pi,E)$ does not almost have invariant vectors.
\begin{lem}\label{lem=kazhdan_implies_noaivector}
If $\cC_{\cF}(G)$ contains a Kazhdan projection, then there exists $c>0$ such that
\[ \delta_S^\pi(x) \geq c \|x\|_{E/E^\pi} \ \forall (\pi,E) \in \cF\textrm{ and }x \in E.\]
\end{lem}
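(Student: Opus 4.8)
The plan is \emph{not} to pass to the limit in an approximating sequence $f_n\to p$ inside a displacement estimate: the compact supports of such $f_n$ a priori grow, and the constant in \eqref{ineq_delta} grows with the support once $M:=\sup_{(\pi,E)\in\cF,\, s\in S}\|\pi(s)\|_{B(E)}>1$, so this would not yield a bound uniform in $(\pi,E)\in\cF$. Instead I would fix one well-chosen approximant of $p$ and absorb the resulting error.

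Since $p$ belongs to the closure of $\{f\in C_c(G):\int f=1\}$ for $\|\cdot\|_\cF$, fix once and for all a single $f\in C_c(G)$ with $\int f=1$ and $\|f-p\|_\cF\leq\tfrac14$. Viewing the linear action $\pi$ as an affine action with zero cocycle and applying Lemma \ref{lem=trivialite} to the measure $f\,dg$ produces a constant $C_f$ (depending only on $f$ and $\cF$) such that $\|\pi(f)x-x\|_E\leq C_f\,\delta_S^\pi(x)$ for every $(\pi,E)\in\cF$ and every $x\in E$.

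Now fix $(\pi,E)\in\cF$ and $x\in E$. Since $E^\pi$ is closed we may choose $y\in E^\pi$ with $\|x-y\|_E\leq 2\|x\|_{E/E^\pi}$; replacing $x$ by $x-y$ affects neither $\delta_S^\pi(x)$ (as $\pi(g)$ fixes $E^\pi$ pointwise) nor $\|x\|_{E/E^\pi}$, so we may assume $\|x\|_E\leq 2\|x\|_{E/E^\pi}$. Since $p$ is a Kazhdan projection, $\pi(p)x\in E^\pi$, whence
\begin{align*}
\|x\|_{E/E^\pi} &\leq \|x-\pi(p)x\|_E \leq \|x-\pi(f)x\|_E + \|(\pi(f)-\pi(p))x\|_E\\
&\leq C_f\,\delta_S^\pi(x) + \|f-p\|_\cF\,\|x\|_E \leq C_f\,\delta_S^\pi(x) + \tfrac12\|x\|_{E/E^\pi}.
\end{align*}
Absorbing the last term gives $\tfrac12\|x\|_{E/E^\pi}\leq C_f\,\delta_S^\pi(x)$, so the lemma holds with $c=1/(2C_f)$.

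The only delicate point is the one flagged at the outset: one must commit to a fixed $f$ before invoking \eqref{ineq_delta}, and pay for this with the term $\|(\pi(f)-\pi(p))x\|_E$. This is harmless precisely because it is bounded by $\|x\|_E$, hence by $2\|x\|_{E/E^\pi}$ after the normalization, so that having chosen $\|f-p\|_\cF$ smaller than $\tfrac12$ lets it be absorbed into the left-hand side. (One could also argue by contradiction, extracting $(\pi_j,E_j)\in\cF$ and $x_j$ with $\|x_j\|_{E_j/E_j^{\pi_j}}=1$ and $\delta_S^{\pi_j}(x_j)$ decreasing fast enough relative to the $j$-th approximant of $p$; this is the same estimate repackaged.)
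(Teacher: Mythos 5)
Your proof is correct and is essentially the paper's own argument: fix a single $f\in C_c(G)$ with $\int f=1$ and $\|f-p\|_\cF$ small, bound $\|x\|_{E/E^\pi}\leq\|x-\pi(f)x\|_E+\|f-p\|_\cF\,\|x\|_E$ using $\pi(p)x\in E^\pi$, absorb the second term via the invariance of $x\mapsto x-\pi(f)x$ and $\delta_S^\pi$ under translation by $E^\pi$, and conclude with Lemma \ref{lem=trivialite}. The only (immaterial) difference is bookkeeping: you normalize $\|x\|_E\leq 2\|x\|_{E/E^\pi}$ at the outset with $\|f-p\|_\cF\leq\tfrac14$, whereas the paper takes the infimum over $y\in E^\pi$ at the end with $\|f-p\|_\cF\leq\tfrac12$.
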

\begin{proof} Let $p \in \cC_{\cF}(G)$ be a Kazhdan projection and $f \in C_c(G)$ such that $\int f=1$ and $\|f - p \|_{\cF}\leq 1/2$. Then for every $x \in E$, 
\[ \|x\|_{E/E^\pi} \leq \| x - \pi(p) x\|_E \leq \|x-\pi(f) x\|_E + \|\pi(f-p)x\|_E \leq \|x-\pi(f)x\|_E + \frac 1 2 \|x\|_E.\]
By replacing $x$ by $x+y$ for $y \in E^\pi$ in the preceding equation, the term $\|x - \pi(f) x\|_E$ is unchanged because $\int f=1$ and we get
\[ \|x\|_{E/E^\pi} \leq  \|x-\pi(f)x\| + \frac 1 2 \|x+y\|_E.\]
Taking the infimum over $y \in E^\pi$ we obtain $\|x\|_{E/E^\pi}\leq 2 \|x-\pi(f)\|_E$. We conclude by Lemma \ref{lem=trivialite} for the measure $f dg$.
\end{proof}

\begin{proof}[Proof of Theorem \ref{thm=local_characterization_of_Kazhdan_constant}] (\ref{item=kazhdan})$\implies$(\ref{item=local_shrinking}). Let $\varepsilon>0$ to be determined later. Let $p \in \cC_{\cF}(G)$ be a Kazhdan projection, and $f \in C_c(G)$ such that $\int f=1$ and $\|p - f\|_{\cF} <\varepsilon$. We prove (\ref{item=local_shrinking}) for the measure $m =fdg$ if $\varepsilon$ is small enough. Define $C$ by 
\[C=\sup_{(\pi,E) \in \cF} \max_{g \in S} \|\pi(g)\|_{B(E)}.\]
Then for $(\pi,E) \in \cF$
\[ \delta_S^\pi(\pi(f) x) = \delta_S^\pi(\pi(f-p) x) \leq (1+C)\|\pi(f-p) x\|_E \leq (1+C)\varepsilon \|x\|_E.\]
By applying this inequality to $x+y$ for $y \in E^\pi$ and taking the infimum over all $y$ we get
\[ \delta_S^\pi(\pi(f)x) \leq (1+C) \varepsilon \|x\|_{E/E^\pi},\]
which by lemma \ref{lem=kazhdan_implies_noaivector} is less than $\frac{(1+C) \varepsilon}{c}\delta_S^\pi(x)$. This is less than $\frac 1 2$ for $\varepsilon <\frac{c}{2+2C}$.

(\ref{item=local_shrinking})$\implies$(\ref{item=kazhdan}). Let $(\pi,E) \in \cF$. By iterating the inequality in (\ref{item=local_shrinking}), we get that $\delta_S^\pi( \pi(m)^n x) \leq 2^{-n} \delta_S^\pi(x)$. If $C_m$ is the constant given by lemma \ref{lem=trivialite} for $m$, we obtain $\|\pi(m)^{n+1} x-\pi(m)^n x\|\leq 2^{-n} C_m \delta_S^\pi(x)$.
By bounding 
\[ \delta_S^\pi(x) \leq  (1+\sup_{(\pi,E) \in \cF} \max_{g \in S} \|\pi(g)\|) \|x\|_E = (1+C)\|x\|_E,\]
we get $\|\pi(m)^{n+1} - \pi(m)^n\| \leq C_m(1+C) 2^{-n}$. This implies that $\pi(m)^n$ is a Cauchy sequence in $B(E)$, and hence has a limit $P_\pi \in B(E)$ and $\| \pi(m)^n - P_\pi\|_{B(E)} \leq 2^{-n} C'$, for $C'=2 C_m (1+C)$. Then $\delta_S^\pi(P_\pi x) = \lim_n \delta_S^\pi(\pi(m)^nx)=0$ and $P_\pi x \in E^\pi$ because $S$ generates $G$. Since for $x \in E^\pi$, $P_\pi(x) =\lim_n \pi(m)^n x = \lim_n x=x$, we get that $P_\pi$ is a projection on the invariant vectors. We are almost done, except that $m^{\ast n} = m\ast m\ast \dots m$ might not be absolutely continuous with respect to the Haar measure. This can be fixed by choosing a function $f_0 \in C_c(G)$ with $\int f_0=1$, and observing that $f_0\ast m^{\ast n}$ belongs to $C_c(G)$ and is Cauchy in $\cC_{\cF}(G)$. Its limit $p$ satisfies $\pi(p)=\pi(f_0) P_\pi =P_\pi$, \emph{i.e.} $p$ is a Kazhdan projection.

Now assume that (\ref{item=kazhdan}) and (\ref{item=local_shrinking}) hold, and let $(\sigma,E)$ be an affine action of $G$, the linear part of which belongs to $\cF$. If $\sigma$ has a fixed point, then $\sigma$ is just a representation in $\cF$ in which the origin has been renamed, so that the inequality $\delta_S^\sigma(\sigma(m) x) \leq \frac 1 2 \delta_S^\sigma( x)$ is immediate from (\ref{item=local_shrinking}). Conversely, if $\delta_S^\sigma(\sigma(m) x) \leq \frac 1 2 \delta_S^\sigma( x)$ for all $x \in E$ then the proof of (\ref{item=local_shrinking})$\implies$(\ref{item=kazhdan}) shows that $\sigma(m^{\ast n})x$ is a Cauchy sequence, and hence converges to a point $y$ satisfying $\delta_S^\sigma(y) = \lim_n \delta_S^\sigma(\sigma(m^{\ast n})x) = 0$, \emph{i.e.} to a fixed point. 
\end{proof}

Finally, we record the following corollary of the proof of Theorem \ref{thm=local_characterization_of_Kazhdan_constant}, which is essential for applications to dynamics \cite{brownFisherHurtado} (see also the analogous discussion below for positive Kazhdan constants). 
\begin{cor}\label{cor:Convergence_speed_to_KP} If $\cC_{\cF}(G)$ contains a Kazhdan projection, then there are $C,s>0$ and a Kazhdan projection $p\in\cC_{\cF}(G)$ such that $p$ is at distance $\leq C e^{-sn}$ from the continuous functions supported in $S^n$.
\end{cor}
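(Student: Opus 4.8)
The plan is to run through the proof of Theorem~\ref{thm=local_characterization_of_Kazhdan_constant} once more, keeping track of the rate at which the approximating functions converge. First I would observe that the implication (\ref{item=kazhdan})$\implies$(\ref{item=local_shrinking}) actually produces a measure of a very special form: starting from a Kazhdan projection $p$ and choosing $f\in C_c(G)$ with $\int f=1$ and $\|p-f\|_{\cF}<\varepsilon$ for $\varepsilon$ small enough (how small depending only on the constant $c$ of Lemma~\ref{lem=kazhdan_implies_noaivector} and on $C=\sup_{(\pi,E)\in\cF}\max_{g\in S}\|\pi(g)\|$), the \emph{absolutely continuous} measure $m=f\,dg$ already satisfies $\delta_S^\pi(\pi(m)x)\le\frac12\delta_S^\pi(x)$ for every $(\pi,E)\in\cF$ and $x\in E$. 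Fixing $N$ with $\mathrm{supp}(f)\subset S^N$, all convolution powers $f^{\ast n}$ then lie in $C_c(G)$ with $\mathrm{supp}(f^{\ast n})\subset S^{Nn}$, and in particular each $f^{\ast n}$ is a continuous function supported in $S^{Nn}$.

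Second, I would extract the quantitative content of the reverse implication (\ref{item=local_shrinking})$\implies$(\ref{item=kazhdan}). Iterating gives $\delta_S^\pi(\pi(m)^n x)\le 2^{-n}\delta_S^\pi(x)$, so by Lemma~\ref{lem=trivialite} and $\delta_S^\pi(x)\le(1+C)\|x\|$ one gets $\|\pi(m)^{n+1}x-\pi(m)^n x\|\le C_m(1+C)2^{-n}\|x\|$, hence $\|\pi(m)^{n+1}-\pi(m)^n\|_{B(E)}\le C_m(1+C)2^{-n}$ uniformly in $(\pi,E)\in\cF$. Taking the supremum over $\cF$ this reads $\|f^{\ast(n+1)}-f^{\ast n}\|_{\cF}\le C_m(1+C)2^{-n}$, so $(f^{\ast n})_n$ is Cauchy in $\cC_{\cF}(G)$ with some limit $p$; the argument in the theorem shows $\pi(p)=\lim_n\pi(m)^n$ is a projection onto $E^\pi$ for each $(\pi,E)\in\cF$, so this $p$ is a Kazhdan projection. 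Summing the geometric tail, $\|p-f^{\ast n}\|_{\cF}\le\sum_{k\ge n}C_m(1+C)2^{-k}=2C_m(1+C)2^{-n}$.

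Finally I would reparametrize from the scale $S^{Nn}$ to the scale $S^n$: given $n\ge N$ set $q=\lfloor n/N\rfloor\ge n/N-1$, so that $f^{\ast q}$ is a continuous function supported in $S^{Nq}\subset S^n$ with $\|p-f^{\ast q}\|_{\cF}\le 2C_m(1+C)2^{-q}\le 4C_m(1+C)2^{-n/N}$. This gives the corollary with $s=(\ln 2)/N$ and $C=4C_m(1+C)$, after enlarging $C$ to absorb the finitely many $n<N$. The only point needing care is that the measure handed to us by the theorem can be taken absolutely continuous with continuous compactly supported density, but this is precisely what the proof of (\ref{item=kazhdan})$\implies$(\ref{item=local_shrinking}) delivers, so no extra work is required; even if one started from a general compactly supported $m$, the $f_0\ast m^{\ast n}$ trick used at the end of the proof of the theorem preserves geometric growth of supports since $\mathrm{supp}(f_0\ast m^{\ast n})\subset S^{N_0}\cdot S^{Nn}$, so the same conclusion would follow.
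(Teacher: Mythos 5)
Your proposal is correct and follows essentially the paper's own route: the paper likewise deduces the corollary by observing that in the proof of (\ref{item=local_shrinking})$\implies$(\ref{item=kazhdan}) of Theorem \ref{thm=local_characterization_of_Kazhdan_constant} the projection is obtained as the limit of $f_0 \ast m^{\ast n}$ with $\|p - f_0\ast m^{\ast n}\|_\cF \leq C' 2^{-n}$ and compactly (hence linearly in $n$) supported approximants. Your only deviation — noting that $m$ may be taken absolutely continuous with density in $C_c(G)$ so that $f^{\ast n}$ works without the regularizer $f_0$, plus spelling out the reparametrization from $S^{Nn}$ to $S^n$ — is a harmless refinement of the same argument.
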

\begin{proof} In the proof of (\ref{item=local_shrinking})$\implies$(\ref{item=kazhdan}) in Theorem \ref{thm=local_characterization_of_Kazhdan_constant}, we constructed $p$ as the limit of $f_0 \ast m^{\ast n}$, where $f_0 \in C_c(G)$, $m$ is a compactly supported measure and $\|p - f_0 \ast m^{\ast n}\| \leq C' 2^{-n}$. This proves the corollary.
  \end{proof}

\subsection{Positive Kazhdan projections}

A natural variant of Definition \ref{defn:Kazhdan_proj} is to require additionally that $p$ belongs to the closure of the \emph{nonnegative} functions $\{ f \in C_c(G), f \geq 0, \int f =1\}$. This variant is particularily relevant to the applications to dynamics, see \cite{brownFisherHurtado}. To our knowledge, in all examples where a Kazhdan projection is known to exist (for examples strong property (T), or Corollaries \ref{cor:T_implies_robustT} -\ref{cor:Delorme-Guichardet_Banach}), it belongs to the closure of the nonnegative functions. However, we do not know if this is the case in general.

We can note that the proof of Theorem \ref{thm=local_characterization_of_Kazhdan_constant} shows in full generality that $\cC_{\cF}(G)$ contains such a ``positive'' Kazhdan projection if and only if there is a \emph{positive} compactly supported probability measure $m$ such that $\delta_S^\pi(\pi(m) x) \leq \frac 1 2 \delta_S^\pi( x)$ for all $(\pi,E)\in \cF$ and $x \in E$. Moreover, in that case there are $C,s>0$ and a Kazhdan projection $p\in\cC_{\cF}(G)$ such that $p$ is at distance $\leq C e^{-sn}$ from the continuous nonnegative functions supported in $S^n$.

Also, we can give a positive answer to the previous question for central Kazhdan projections and isometric representations.
\begin{prop} Assume that $\mathcal F$ is made of isometric representations and is stable by complex conjugation. If $\cC_{\cF}(G)$ contains a central Kazhdan projection, then this central Kazhdan projection belongs to the closure of the nonnegative functions.
\end{prop}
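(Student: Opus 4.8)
The plan is to reduce the statement to the unitary case via a standard averaging argument. Suppose $p \in \cC_{\cF}(G)$ is a central Kazhdan projection, where $\cF$ consists of isometric representations and is stable by complex conjugation. By Corollary \ref{cor:KP_automatically_central} (using that $\cF$ stable by complex conjugation makes $f \mapsto \overline f$ extend continuously, and that for isometric representations $\cF$ one can arrange weak self-adjointness after passing to $\cF \cup \cF^*$ — but here $p$ is already \emph{assumed} central), the projection $p$ is self-adjoint and real, i.e. $\widetilde p = p$ and $\overline p = p$. The first step is to observe that by Theorem \ref{thm=local_characterization_of_Kazhdan_constant} there is a compactly supported probability measure (complex a priori) $m$ with $\delta_S^\pi(\pi(m)x) \le \frac12 \delta_S^\pi(x)$ for all $(\pi,E) \in \cF$; we want to replace it by a \emph{nonnegative} one.

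First I would symmetrize. Replace $m$ by $m' = \frac14(m + \widetilde m + \overline m + \overline{\widetilde m})$, which is a real symmetric compactly supported probability measure. For an isometric representation $(\pi,E)$, $\pi(\widetilde m) = \pi(m)^*$-like behaviour and $\pi(\overline m)$ is the conjugate representation, so since $\cF$ is stable by complex conjugation the shrinking inequality $\delta_S^\pi(\pi(m')x) \le \frac12 \delta_S^\pi(x)$ still holds, possibly after noting the inequality is preserved under averaging the measures (the operator $\pi(m')$ is the average of the corresponding operators, and $\delta_S^\pi$ of an average is at most the max over the four terms by the triangle inequality, each of which is $\le \frac12 \delta_S^\pi(x)$ on the appropriate representation in $\cF$). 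So we may assume $m$ is real and symmetric but still possibly signed.

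The key step is to pass from a signed symmetric $m$ to a nonnegative one. The idea: consider $\mu = m * \widetilde m = m * m$ (using symmetry), which satisfies $\delta_S^\pi(\pi(\mu)x) \le \frac14 \delta_S^\pi(x) \le \frac12 \delta_S^\pi(x)$ but is still signed. Instead, the right move is to write $m = m_+ - m_-$ with $m_\pm \ge 0$ and use the total mass bookkeeping: $\int m_+ = 1 + t$, $\int m_- = t$ for some $t \ge 0$. Then consider the nonnegative probability measure obtained by replacing the negative part by something dominated. Concretely, pick a nonnegative compactly supported probability measure $\nu$ (e.g. uniform on $S^k$ for suitable $k$) and form, for large $N$, the measure $\nu^{*N} * m$ or a convex combination $(1-\lambda)\nu^{*N} + \lambda m$; the point of convolving with $\nu^{*N}$ is that for isometric representations $\|\pi(\nu^{*N}) - \pi(p)\| \to 0$ would follow if $\nu$ itself locally shrinks, but it may not. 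So the cleaner route: since $p$ is already a Kazhdan projection in $\cC_{\cF}(G)$ with $\cF$ isometric, and $\nu$ is any nonnegative probability measure, $\pi(\nu)\pi(p) = \pi(p)$; moreover for isometric representations on superreflexive spaces one has spectral gap so $\|\pi(\nu^{*N}) - \pi(p)\|$ decays — but superreflexivity is \emph{not} assumed here. This is the main obstacle: without reflexivity/spectral-gap machinery, controlling iterates of a genuinely positive averaging operator for \emph{all} isometric representations in $\cF$ at once is delicate.

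The way around the obstacle, and the step I expect to be decisive, is: use that $p$ is \emph{central} together with the intrinsic characterization (Lemma \ref{lem:intrinsic_char_K_proj}): $f * p = (\int f) p = p * f$ for all $f \in C_c(G)$. Take any fixed nonnegative $f_0 \in C_c(G)$ with $\int f_0 = 1$. Then $p = f_0 * p$, and $p = \lim_n f_n$ for some $f_n \in C_c(G)$ with $\int f_n = 1$; hence $p = f_0 * p = \lim_n f_0 * f_n$. The plan is to show $f_0 * f_n$ can be taken nonnegative for $n$ large by truncating its negative part: set $g_n = (f_0 * f_n)_+$ normalized; one shows $\|g_n - f_0 * f_n\|_{\cF} \to 0$ because for any isometric $(\pi,E)$, $\|\pi(f_0 * f_n) - \pi(p)\| \to 0$ uniformly (this uses boundedness $\sup \|\pi(g)\| \le 1$ heavily, plus that the convergence $f_0 * f_n \to p$ is in the $\cF$-norm which is the sup over $\pi$), and the $L^1$-mass of the negative part of $f_0 * f_n$ tends to $0$ — indeed $\|(f_0*f_n)_-\|_1 \le \|f_0 * f_n - f_0 * f_m\|_1 + \|(f_0*f_m)_-\|_1$, and while $L^1$-convergence is not given, one can instead argue that on the trivial representation $\pi = 1$ (which belongs to $\cF$ since the trivial $1$-dimensional representation is isometric), $\pi(f_0 * f_n) = \int f_0 * f_n = 1$ is automatic, and test against coefficients; more robustly, realize $(f_0 * f_n)$ is Cauchy in $\cC_\cF$ and in $L^1$ simultaneously by choosing the $f_n$ from the explicit construction $f_0 * m^{*n}$ in the proof of Theorem \ref{thm=local_characterization_of_Kazhdan_constant}, with $m$ now nonnegative after the symmetrization-plus-convolution trick above. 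So the final clean statement of the plan: (1) symmetrize $m$; (2) replace $m$ by $m * m \ge$-dominated convex combination with a positive measure to kill the negative part while preserving the $\frac12$-shrinking — this is the crux; (3) invoke the ``positive Kazhdan projection'' remark right before the proposition, which says a positive compactly supported probability measure with the $\frac12$-shrinking property yields a Kazhdan projection in the closure of the nonnegative functions; (4) conclude $p$ equals that projection by centrality and Proposition \ref{prop:central}\eqref{item:only_proj}. The hard part is genuinely step (2): producing a \emph{positive} locally-shrinking measure from a signed one, for which I would exploit that for \emph{isometric} representations $\pi(m)^n \to \pi(p)$ in norm with geometric rate, so $\mu = (1-\lambda) \nu + \lambda\, \frac{1}{N}\sum_{k<N} m^{*k}$ for small $\lambda$ and $N$ large is positive (as $\frac1N \sum m^{*k}$ is close in $\cC_\cF$ to $p$, hence the operator it induces is close to $\pi(p)$, but that doesn't make the \emph{measure} positive) — so actually one must instead convolve on the left by a fixed positive $\nu$ enough times that the negative mass is absorbed, using $\|\nu * \cdot\|_1$-contraction is false, hence one really does need to go through operators: define the positive measure as a long average of $m^{*k}$ \emph{plus} a correction supported near the origin, and verify the shrinking directly. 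I would present this computation carefully as the heart of the proof.
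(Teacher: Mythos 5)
There is a genuine gap: the decisive step of your plan --- producing nonnegative approximants of $p$ --- is never actually carried out, and each route you sketch is either acknowledged by you to fail or is circular. Truncating the negative part of $f_0\ast f_n$ does not work as stated, because convergence $f_0\ast f_n\to p$ in $\|\cdot\|_{\cF}$ gives no control whatsoever on the $L^1$-mass of $(f_0\ast f_n)_-$ (the trivial representation only sees the total integral $\int f_0\ast f_n=1$, not the positive and negative masses separately), and your fallback ``choose $f_n=f_0\ast m^{\ast n}$ with $m$ nonnegative'' presupposes precisely the positive shrinking measure you are trying to construct --- this is your step (2), which you label the crux and leave as ``verify the shrinking directly'' without an argument. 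The final suggestion (a long average of $m^{\ast k}$ plus a correction near the origin) is likewise not justified: as you yourself note, closeness of the \emph{operator} $\pi\bigl(\tfrac1N\sum_k m^{\ast k}\bigr)$ to $\pi(p)$ says nothing about positivity of the \emph{measure}. A smaller issue: from stability of $\cF$ under complex conjugation you may conclude $\overline p=p$ (uniqueness of the central Kazhdan projection), but self-adjointness $\widetilde p=p$ requires weak self-adjointness of $\cF$, which is not among the hypotheses; fortunately it is not needed.

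The missing idea is that one should not try to make the shrinking measure positive at all. The paper's argument goes as follows: by centrality, each $\ker\pi(p)$ is $\pi(G)$-invariant, so one may replace every $(\pi,E)\in\cF$ by its restriction to $\ker\pi(p)$; on this family (still isometric) the projection becomes $0$, and an approximation of $0$ there converts back into an approximation of $p$ in $\cC_{\cF}(G)$ because $\pi(g)-\pi(p)=\pi(g)(1-\pi(p))$ for any $g\in C_c(G)$ with $\int g=1$. Now take a \emph{real} $f\in C_c(G)$ with $\int f=1$ and $\|f\|\leq\frac12$ (realness of $p$ gives such an $f$), and write $f=af_+-bf_-$ with $f_\pm\geq0$, $\int f_\pm=1$, $a-b=1$. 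Since the representations are isometric, $\|f_-\|\leq\|f_-\|_{L^1}=1$, so $a\|f_+\|\leq\|f\|+b\leq\frac12+b<a$, i.e. $\|f_+\|<1$; hence the convolution powers $f_+^{\ast n}$ are nonnegative, of integral $1$, and converge to $0=p$ in the restricted family, hence to $p$ in $\cC_{\cF}(G)$. This exploitation of the isometry hypothesis (to bound $\|\pi(f_-)\|\leq1$ and conclude that the positive part of a close approximant has norm strictly less than $1$ on $\ker\pi(p)$) is exactly what your proposal lacks, and without it the positivity step remains unproved.
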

\begin{proof} Assume that $\cC_{\cF}(G)$ contains a central Kazhdan projection $p$. By Proposition \ref{prop:central}, $p$ is real and there is a real valued function $f \in C_c(G)$ with $\int f=1$ and $\|f-p\|\leq \frac 1 2$. Moreover, by replacing every $(\pi,E)\in \cF$ by $(\pi\left|_{\ker p}\right.,\ker p)$ (which is indeed a representation because the projection is central, see Proposition \ref{prop:central}) we can assume that $p=0$. Let $f=a f_+ - b f_-$ be a decomposition of $f$ with $f_+$ and $f_-$ nonnegative with integral $1$, and $a, b$ are nonnegative real numbers such that $a-b=1$. Then $1/2 \geq \|f\| \geq a \|f_+\| - b \|f_-\|$. This implies that $\|f_+\| \leq \frac{1/2+b}{a} <1$ (here we use that $\pi$ is isometric to ensure $\|\pi(f_-)\| \leq 1$). Then the sequence of $n$-th power convolutions of $f_+$ is a sequence of nonnegative functions of integral $1$ which also converge to $0=p$.
\end{proof}

\section{A topology on the space of representations}\label{sec:topology}

The purpose of this section is to define a natural topology on sets of (equivalence classes of) Banach space valued representations of locally compact groups, and to characterize this topology in terms of ultraproduct representations.

\subsection{Definition of the topology}\label{subsection:def_topolofy}

Let $G$ be a locally compact group and $\cR$ be a set of equivalence classes of Banach space representations of $G$. 

For every $(\pi,E) \in \cR$, every $x_1,\dots,x_n \in E$, every compact subset $Q \subset G$ and every $\varepsilon>0$ we define $W_{x_1,\dots,x_n,Q,\varepsilon}(\pi,E) \subset \cR$ as the set of all representations $(\pi',E') \in \cR$ such that there is $ x'_1,\dots,x'_n \in E'$ such that 
\begin{equation}\label{eq:def_WxQe}\sup_{f_1,\dots,f_n}| \| \sum_{k=1}^n \pi'(f_k) x'_k\| - \| \sum_{k=1}^n \pi(f_k) x_k \| |  < \varepsilon \end{equation} where the supremum is over all $f_1,\dots,f_n \in C_c(G)$ supported in $Q$ and with $\|f_k\|_{L_1} \leq 1$. 

The sets $W_{x_1,\dots,x_n,Q,\varepsilon}(\pi,E)$ form a basis for a topology on $\cR$. This topology is not Hausdorff (every subrepresentation of $(\pi,E)$ belongs to the closure of $\{(\pi,E)\}$). If $\cR$ contains the trivial representation on the $0$-dimensional Banach space, then it is compact-but-not-Hausdorff for the stupid reason that $\cR$ is the only neighbourhood of $0$.

\begin{rem}\label{rem:equivalence_WxQe}
Because we are dealing with strongly continuous representation, \eqref{eq:def_WxQe} implies 
\[\sup_{m_1,\dots,m_n}| \| \sum_{k=1}^n \pi'(m_k) x'_k\| - \| \sum_{k=1}^n \pi(m_k) x_k \| |  < \varepsilon\]
where the supremum is over all measures $m_1,\dots,m_n$ supported in the interior of $Q$ and with total variation $\leq 1$. Conversely, \eqref{eq:def_WxQe} follows from the preceding inequality where the supremum is over all measures $m_1,\dots,m_n$ with finite support contained in $Q$ and with total variation $\leq 1$.
\end{rem}

The restriction to unitary representations of this topology is not exactly the usual Fell topology \cite[Appendix F]{bekkadelaharpevalette}, since for example the trivial representation of $G$ on $\C^2$ does not belong to the closure of the trivial representation on $\C$, whereas it belongs to the Fell topology closure of it. The next lemma in particular shows that a unitary representation $\pi$ belongs to the closure of another unitary representation $\rho$ if and only if $\pi$ is weakly contained in the sense of Zimmer in $\rho$ \cite{zimmer} (see also \cite[Appendix F]{bekkadelaharpevalette}). 
\begin{lem} Assume that $\cR$ is a set of unitary representations of $G$. A representation $(\pi,\mathcal H)\in  \cR$ belongs to the closure of $A \subset \cR$ if and only if for every orthonormal family $\xi_1,\dots,\xi_n \in \mathcal H$, every compact subset $Q \subset G$ and every $\varepsilon>0$, there is a representation $(\rho,\mathcal K) \in A$, an orthonormal family $\eta_1,\dots,\eta_n \in \mathcal \mathcal K$ such that $\max_{i,j} \max_{g \in Q} |\langle \pi(g) \xi_i,\xi_j\rangle - \langle \pi(g) \eta_i,\eta_j\rangle| < \varepsilon$. 
\end{lem}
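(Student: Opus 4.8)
The plan is to prove the equivalence between membership in the closure of $A$ for our topology and the Zimmer-style weak containment condition by translating both sides into statements about norms of linear combinations $\sum_k \rho(f_k)\eta_k$, which is what the topology is literally defined in terms of, and then use elementary Hilbert space identities to pass between those norms and the matrix coefficients $\langle \rho(g)\eta_i,\eta_j\rangle$.

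First I would observe that, by definition, $(\pi,\mathcal H)$ belongs to the closure of $A$ if and only if for every $\xi_1,\dots,\xi_n\in\mathcal H$, every compact $Q$ and every $\varepsilon>0$ the set $W_{\xi_1,\dots,\xi_n,Q,\varepsilon}(\pi,\mathcal H)$ meets $A$, i.e.\ there is $(\rho,\mathcal K)\in A$ and $\eta_1,\dots,\eta_n\in\mathcal K$ with $|\,\|\sum_k\rho(f_k)\eta_k\| - \|\sum_k\pi(f_k)\xi_k\|\,|<\varepsilon$ for all $f_k\in C_c(G)$ supported in $Q$ with $\|f_k\|_{L_1}\le 1$. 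So the real content is: this family of norm conditions (quantified over $n$, the $\xi_i$, $Q$, $\varepsilon$) is equivalent to the matrix-coefficient condition (quantified over \emph{orthonormal} families, $Q$, $\varepsilon$). Since approximating vectors by nearby ones only perturbs all the relevant quantities by a controlled amount (the $\rho(f_k)$ are contractions on $L_1$-norm-$\le 1$ inputs because $\rho$ is unitary, so $\|\sum\rho(f_k)\eta_k\|$ is Lipschitz in the $\eta_k$), I can freely pass between arbitrary families and orthonormal ones on both sides, absorbing the error into $\varepsilon$. Also, using Remark \ref{rem:equivalence_WxQe} I may replace the $f_k$'s by finitely supported measures, in particular by Dirac masses $\delta_{g}$ with $g\in Q$.

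The bridge between the two formulations is the polarization-type identity: for vectors $u_1,\dots,u_m$ and scalars $c_1,\dots,c_m$,
\[ \Bigl\|\sum_{k} c_k u_k\Bigr\|^2 = \sum_{k,l} c_k\overline{c_l}\,\langle u_k,u_l\rangle. \]
Taking $u_k = \rho(g_k)\eta_{i_k}$ for finitely many group elements $g_k\in Q$ and indices $i_k$, the inner products appearing are $\langle\rho(g_k)\eta_{i_k},\rho(g_l)\eta_{i_l}\rangle = \langle\rho(g_l^{-1}g_k)\eta_{i_k},\eta_{i_l}\rangle$, a matrix coefficient evaluated at a point of $Q^{-1}Q$. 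Thus controlling all matrix coefficients $\langle\rho(g)\eta_i,\eta_j\rangle$ on $Q^{-1}Q$ to within $\delta$ controls all the norms $\|\sum_k\rho(f_k)\eta_k\|$ (for $f_k$ supported in $Q$, $\|f_k\|_{L_1}\le 1$) to within a bound depending only on $n$, $m$ and $\delta$ — giving one implication directly. Conversely, each individual matrix coefficient $\langle\rho(g)\eta_i,\eta_j\rangle$ can be recovered from finitely many norms $\|\rho(\delta_g)\eta_i + z\,\eta_j\|$ for $z\in\{1,i,-1,-i\}$ by the usual polarization identity $4\langle a,b\rangle = \sum_{z} z\|a + z b\|^2$ — wait, one must be careful that $\eta_j$ is itself of the form $\rho(\delta_e)\eta_j$ and $e\in Q$ since $e\in S\subset Q$ may be assumed (or enlarge $Q$), so these are exactly the allowed test quantities — and controlling those norms to within $\delta$ controls the matrix coefficient to within $O(\delta)$. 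So controlling the norms controls the matrix coefficients, giving the other implication.

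The main obstacle I anticipate is bookkeeping with the quantifiers and the passage between ``arbitrary'' and ``orthonormal'' families, together with the fact that orthonormality of the $\eta_i$'s is part of the target statement but the topology gives no orthogonality for free. The resolution is Gram–Schmidt stability: if $\xi_1,\dots,\xi_n$ are orthonormal in $\mathcal H$ and the $\eta_i\in\mathcal K$ have all pairwise inner products $\langle\eta_i,\eta_j\rangle$ close to $\delta_{ij}$ (which is forced once we test the norm condition against Dirac masses at $e$ and short combinations, since $\|\eta_i\|$ and $\|\eta_i+z\eta_j\|$ are among the controlled quantities), then the $\eta_i$ span an $n$-dimensional space on which the Gram matrix is within $O(\delta)$ of the identity, so applying the inverse square root of the Gram matrix produces a genuinely orthonormal family $\eta_i'$ with $\|\eta_i-\eta_i'\| = O(\delta)$; replacing $\eta_i$ by $\eta_i'$ changes every matrix coefficient by $O(\delta)$, absorbed into $\varepsilon$. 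A minor point to handle cleanly is that $Q$ and $Q^{-1}Q$ differ, but since $Q$ ranges over all compact sets and $S$ is symmetric with $e\in S$, we can always enlarge $Q$ to be symmetric and contain $e$ before running the argument, so $Q^{-1}Q\subset Q^2$ and this costs nothing. Everything else is routine $\varepsilon$-$\delta$ chasing with explicit constants depending only on $n$.
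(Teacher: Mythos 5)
Your proposal is correct and takes essentially the same route as the paper: the paper's entire proof is the identity $\|\sum_k \pi(m_k)\xi_k\|^2 = \sum_{k,l}\iint\langle\pi(h^{-1}g)\xi_k,\xi_l\rangle\,dm_k(g)\,dm_l(h)$, which is precisely your bridge between the norms defining the topology and the matrix coefficients. The polarization step, the Gram-matrix orthonormalization, and the enlargement of $Q$ that you spell out are the routine details the paper dismisses as ``clear''.
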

\begin{proof} Clear, because if $\pi$ is a unitary representation,
\[ \|\sum_k \pi(m_k) \xi_k\|^2 = \sum_{k,l} \iint \langle \pi(h^{-1} g) \xi_k,\xi_l\rangle dm_k(g) dm_l(h).\]
\end{proof}

\begin{rem}\label{rem:closure_implies_fin_rep+bound}
If $(\pi,E)$ belongs to the closure of $\cF \subset \cR$, then
\begin{itemize} 
\item $\|\pi(m)\| \leq \sup_{(\pi',E') \in \cF} \|\pi'(m)\|$ for every compactly supported measure on $G$. In particular $\cC_{\cF}(G)$ and   $\cC_{\overline{\cF}}(G)$ are isometric. By Lemma \ref{lem:intrinsic_char_K_proj}, this implies that if $\cC_{\cF}(G)$ has a Kazhdan projection, then so has $\cC_{\overline{\cF}}(G)$.
\item $E$ is finitely representable in $\{E',(\pi',E') \in \cF\}$. In particular if $G=\{1\}$ is the trivial group, we have just defined a (classical) topology on sets of Banach spaces which is characterized as follows~: $X$ belongs to the closure of a subset $\cE$ if and only if $X$ is finitely representable in $\cE$. Hence (Subsection \ref{subsection:ultraproducts}) this topology can be characterized in terms of ultraproducts. In the rest of this section we show such a characterization in the case of an arbitrary group $G$. This will be rather straightforward for discrete groups, and technically more involved in the general case.
\end{itemize}
\end{rem}

Before that we investigate strong neighbourhoods for this topology in the sense of Definition \ref{defn:strong_neigh}.

\subsection{Examples of strong neighbourhoods}\label{subsection:strongneig_ex}
Here we characterize, in several examples, the strong neighbourhoods of subsets of $\cR$. We need to recall some notation. If $\cE$ is a class of Banach spaces and $m \colon G \to (0,\infty]$ is a function, we denote $\cF({\cE},m)$ (respectively $\cF(\overline{\cE},m)$) the set of all equivalence classes of representations $(\pi,E)$ in $\cR$ such that $E$ is isometric to a space in $\cE$ (respectively $E$ is finitely representable in $\cE$) and $\|\pi(g)\|\leq e^{m(g)}$ for all $g$. Recall the definition of $\cE^{N,\varepsilon}$ in \S \ref{subsection:ultraproducts}.

\begin{prop}\label{prop:strong_neigh_Em} Let $\cE$ be a class of Banach spaces and $m \colon G\to (0,\infty]$ a function. Every strong neighbourhood of $\cF(\overline{\cE},m)$ contains $\cF(\cE^{N,\varepsilon},m+\varepsilon \ell)$ for some $N \in \N$ and $\varepsilon>0$.
\end{prop}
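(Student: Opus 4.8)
The plan is to prove the contrapositive via the net/ultrafilter characterization of strong neighbourhoods in Lemma \ref{lem:characterization_strong_neigh}. Suppose $B$ is a strong neighbourhood of $\cF(\overline{\cE},m)$. We must produce $N$ and $\varepsilon$ with $\cF(\cE^{N,\varepsilon},m+\varepsilon\ell)\subset B$. If not, then for every $N\in\N$ we can find a representation $(\pi_N,E_N)$ with $E_N\in\cE^{1/N,1/N}$ and $\|\pi_N(g)\|\le e^{m(g)+\ell(g)/N}$ for all $g$, but $(\pi_N,E_N)\notin B$. We index by the directed set $I=\N$ (or by pairs, to get finer control) and form the net $(\pi_N,E_N)_{N}$, all of whose terms lie in $B^c$. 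By property (\ref{item:strongneig1}) of Lemma \ref{lem:characterization_strong_neigh}, this net has an accumulation point $(\pi,E)$ lying outside $\cF(\overline{\cE},m)$. The strategy is then to show that in fact every accumulation point of such a net \emph{does} lie in $\cF(\overline{\cE},m)$ — i.e. it is a representation on a space finitely representable in $\cE$ with $\|\pi(g)\|\le e^{m(g)}$ — giving the desired contradiction.

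The core of the argument is therefore to understand accumulation points of this net in the topology defined in \S\ref{subsection:def_topolofy}. Fix an accumulation point $(\pi,E)$. First I would control the norms: if $\|\pi(g_0)\|>e^{m(g_0)}$ for some $g_0$, then there is $x\in E$ with $\|\pi(f)x\|/\|x'\|$-type quantities (with $f$ an approximate unit at $g_0$) exceeding $e^{m(g_0)}$ by a definite amount; but membership of $(\pi,E)$ in the closure of $\{(\pi_N,E_N):N\ge N_0\}$ for every $N_0$ forces, via \eqref{eq:def_WxQe} and Remark \ref{rem:equivalence_WxQe}, the existence of arbitrarily large $N$ with $\|\pi_N\|$-data matching this within $\varepsilon$; since $\|\pi_N(g)\|\le e^{m(g)+\ell(g)/N}\to e^{m(g)}$, we get a contradiction for $N$ large. (One has to be slightly careful with the ``per-$g$'' versus ``uniform'' bound and with strong continuity, using \eqref{ineq_delta} to pass between displacement on $S$ and on general compact sets.) Second, I would show $E$ is finitely representable in $\cE$: given a finite-dimensional subspace $Y\subset E$ and $\delta>0$, pick vectors $x_1,\dots,x_n$ whose span is $\delta$-dense enough in the unit sphere of $Y$, realized as $\pi(f_k)$-images so that the $W$-neighbourhood controls their pairwise norms; matching these against $(\pi_N,E_N)$ for $N$ with $1/N<\delta$, we get a $(1+O(\delta))$-embedding of $Y$ into $E_N\in\cE^{1/N,1/N}$, hence (composing with the definition of $\cE^{1/N,1/N}$ and Lemma on Banach–Mazur submultiplicativity) into a subspace of a space in $\cE$, up to $(1+O(\delta))$. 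Letting $\delta\to0$ shows $Y$ embeds $(1+\varepsilon')$-isometrically into $\cE$ for every $\varepsilon'$, i.e. $E$ is finitely representable in $\cE$, so $(\pi,E)\in\cF(\overline{\cE},m)$.

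Here it is cleaner to index the net by $I=\{(N,\cZ,\delta)\}$ where $\cZ$ ranges over finite-dimensional subspaces of candidate limit spaces — but actually, since we do not yet have the limit space in hand, the honest approach is the one above: take any accumulation point and verify the two properties directly from the neighbourhood basis $W_{x_1,\dots,x_n,Q,\varepsilon}$, using that an accumulation point lies in the closure of every tail of the net. The passage from ``matches the norm-data of $\pi(f_k)x_k$'' to ``$Y$ Banach–Mazur close to a subspace of $E_N$'' is the standard local-theory argument already reproduced in \S\ref{subsection:ultraproducts} for the trivial group, now carried out with the group action switched on but only used to \emph{produce} the test vectors as $\pi(f_k)x_k$; the group structure plays no role in the finite-representability part beyond supplying vectors.

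I expect the main obstacle to be bookkeeping the quantifiers so that a \emph{single} $N$ simultaneously handles the norm bound (needs $\ell(g)/N$ small on a large enough ball, hence $N$ large relative to the support of the test functions) and the finite-representability estimate (needs $1/N<\delta$), while the test data $x_1,\dots,x_n,Q,\varepsilon$ must be chosen \emph{before} $N$; this is fine because an accumulation point meets $W_{x_1,\dots,x_n,Q,\varepsilon}(\pi,E)$ restricted to arbitrarily large tails of the net, so one first fixes the test data, then selects $N$ large. A secondary technical point is strong continuity of the limit representation $\pi$ — but this is automatic since $\pi$ is assumed to be a (strongly continuous) representation in $\cR$, being an element of $\cR$; no separate verification is needed. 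The only genuinely delicate estimate is converting $\|\pi(g_0)\|>e^{m(g_0)}$ into a violation expressible through $\|\pi(f)x\|$ with $f$ a normalized bump supported near $g_0$, which uses strong continuity of $\pi$ to find $x$ and $f$ with $\|\pi(f)x\|$ close to $\|\pi(g_0)\|\,\|x\|$; this is routine.
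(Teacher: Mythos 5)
Your overall strategy is exactly the paper's: assume no pair $(N,\varepsilon)$ works, extract a net of representations lying outside the putative strong neighbourhood, invoke (\ref{item:strongneig1}) of Lemma \ref{lem:characterization_strong_neigh} to obtain an accumulation point outside $\cF(\overline{\cE},m)$, and contradict this by showing that every accumulation point of such a net lies in $\cF(\overline{\cE},m)$. The paper compresses that last step into a citation of Remark \ref{rem:closure_implies_fin_rep+bound}; you re-prove its two halves (the norm bound and finite representability) by hand. Your finite-representability argument is the standard local-theory one and is fine (apart from the slip $\cE^{1/N,1/N}$, which should read $\cE^{N,1/N}$, and the need to take $N\geq \dim Y$, which the tail provides).

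The one step that would fail as written is the norm bound. You propose to convert $\|\pi(g_0)\|>e^{m(g_0)}$ into a contradiction by testing against a normalized bump $f$ supported near $g_0$, using strong continuity; but $m$ is an arbitrary function $G\to(0,\infty]$, so $\sup_{g\in\mathrm{supp}(f)}e^{m(g)+\ell(g)/N}$ need not be comparable to $e^{m(g_0)}$ (it can even be infinite), and shrinking the support does not rescue the estimate. The remedy is already among the tools you cite: by Remark \ref{rem:equivalence_WxQe}, the defining inequality \eqref{eq:def_WxQe} extends to all measures supported in the interior of $Q$ with total variation $\leq 1$, in particular to the Dirac masses $\delta_{g_0}$ and $\delta_e$. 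Testing against these gives, for fixed $x$ and arbitrarily large $N$ in the tail, $\|\pi(g_0)x\|\leq e^{m(g_0)+\ell(g_0)/N}\left(\|x\|+\varepsilon\right)+\varepsilon$, hence $\|\pi(g_0)\|\leq e^{m(g_0)}$, with no continuity assumption on $m$ and no bump-function detour. This is precisely how the paper's Remark \ref{rem:closure_implies_fin_rep+bound} (stated for arbitrary compactly supported measures) is meant to be applied; with that repair your proof coincides with the paper's.
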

\begin{proof} Let $\cF'$ be a strong neighbourhood of $\cF$. Assume by contradiction that for every pair $\alpha = (N,\varepsilon)$ of an integer $N$ and a positive number $\varepsilon$ there is a representation $(\pi_\alpha,E_\alpha)$ which belongs to $\cF(\cE^{N,\varepsilon},m+\varepsilon \ell)$ by not to $\cF'$. Then every accumulation point of this net (for the order $(N,\varepsilon) \leq (N',\varepsilon')$ if $N \leq N'$ and $\varepsilon \geq \varepsilon'$) belongs to $\cF(\overline{\cE},m)$ (see Remark \ref{rem:closure_implies_fin_rep+bound}). This is a contradiction with (\ref{item:strongneig1}) in Lemma \ref{lem:characterization_strong_neigh}.
\end{proof}
A particular case of the preceding proposition is worth mentioning.
\begin{prop}\label{prop:strong_neigh_Hilb} Let $\mathcal H$ be the class of Hilbert spaces and $m \colon G\to (0,\infty]$ a function. Every strong neighbourhood of $\cF(\mathcal H,m)$ contains $\cF(\cE(\varepsilon),m+\varepsilon \ell)$ for some  $\varepsilon>0$, where $\cE(\varepsilon)$ is the class of Banach spaces such that
\[\frac 1 2  \left(\|x+y\|^2 + \|x-y\|^2 \right) \leq (1+\varepsilon) \left(\|x\|^2 + \|y\|^2 \right) \ \ \forall x,y \in E.\]
\end{prop}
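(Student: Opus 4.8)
The plan is to deduce this from Proposition \ref{prop:strong_neigh_Em} applied with $\cE=\mathcal H$, after two elementary observations. \emph{First}, one checks that $\cF(\overline{\mathcal H},m)=\cF(\mathcal H,m)$; equivalently, that every Banach space $X$ finitely representable in the class $\mathcal H$ of Hilbert spaces is itself (isometric to) a Hilbert space. By the characterization recalled in \S\ref{subsection:ultraproducts}, such an $X$ embeds isometrically, hence as a closed subspace, into some ultraproduct $\prod_\cU \mathcal H_i$ of Hilbert spaces; the parallelogram identity passes to the limit along $\cU$ (using $\|(x_i)_\cU\|^2=\lim_\cU\|x_i\|^2$), so $\prod_\cU\mathcal H_i$ is a Hilbert space by the Jordan--von Neumann theorem, and so is $X$. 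Consequently a strong neighbourhood of $\cF(\mathcal H,m)$ is a strong neighbourhood of $\cF(\overline{\mathcal H},m)$, and Proposition \ref{prop:strong_neigh_Em} provides $N\in\N$ and $\varepsilon>0$ such that it contains $\cF(\mathcal H^{N,\varepsilon},m+\varepsilon\ell)$. Note also that, since subspaces of Hilbert spaces are Hilbert, $\mathcal H^{N,\varepsilon}$ is exactly the class of Banach spaces all of whose $N$-dimensional subspaces are at Banach--Mazur distance $<1+\varepsilon$ from the Euclidean space $\ell_2^N$.

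\emph{Second}, it suffices to produce $\varepsilon'\in(0,\varepsilon]$ with $\cE(\varepsilon')\subset\mathcal H^{N,\varepsilon}$: then $\cF(\cE(\varepsilon'),m+\varepsilon'\ell)\subset\cF(\mathcal H^{N,\varepsilon},m+\varepsilon\ell)$, the inclusion $\cE(\varepsilon')\subset\mathcal H^{N,\varepsilon}$ taking care of the spaces and $\varepsilon'\le\varepsilon$ of the bounds $\|\pi(g)\|\le e^{m(g)+\varepsilon'\ell(g)}\le e^{m(g)+\varepsilon\ell(g)}$, which finishes the proof. As any $N$-dimensional subspace of a space in $\cE(\varepsilon')$ is an $N$-dimensional normed space satisfying the same defining inequality, the required inclusion reduces, in view of the previous paragraph, to the following finite-dimensional claim: for every $N\in\N$ and $\varepsilon>0$ there is $\varepsilon'>0$ such that every normed space $X$ with $\dim X=N$ and $\tfrac12(\|x+y\|^2+\|x-y\|^2)\le(1+\varepsilon')(\|x\|^2+\|y\|^2)$ for all $x,y\in X$ is at Banach--Mazur distance $<1+\varepsilon$ from $\ell_2^N$.

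This claim is a soft quantitative form of the Jordan--von Neumann theorem, and I would prove it by a compactness argument. Suppose it fails for some $N$ and $\varepsilon$: then there are normed spaces $X_k$ of dimension $N$ satisfying the above inequality with $\varepsilon'=1/k$ but at Banach--Mazur distance $\ge1+\varepsilon$ from $\ell_2^N$. By compactness of the Banach--Mazur compactum of $N$-dimensional normed spaces, a subsequence of $(X_k)$ converges, in Banach--Mazur distance, to an $N$-dimensional normed space $X_\infty$. Passing to the limit (after the routine identification of the $X_k$ with norms on a fixed $N$-dimensional space) in the inequality defining $\cE(1/k)$, whose constant tends to $1$, shows that $X_\infty$ satisfies the parallelogram identity exactly, hence is isometrically Euclidean, so its Banach--Mazur distance to $\ell_2^N$ equals $1$. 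But then submultiplicativity of the Banach--Mazur distance $d(\cdot,\cdot)$ gives $1+\varepsilon\le d(X_k,\ell_2^N)\le d(X_k,X_\infty)\,d(X_\infty,\ell_2^N)=d(X_k,X_\infty)\to1$, a contradiction.

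The only step with any content is this last claim; everything else is bookkeeping with the monotonicity of $\cF(\cdot,\cdot)$ in its two arguments and with the definitions of $\mathcal H^{N,\varepsilon}$ and $\cE(\varepsilon')$. I do not anticipate a real obstacle: in particular one does not need the (harder, though classical) fact that a space satisfying the $\cE(\varepsilon')$-inequality is isomorphic to a Hilbert space with constant independent of the dimension, only the fixed-dimension statement above, for which the compactness argument suffices.
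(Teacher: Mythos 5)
Your argument is correct and follows essentially the same route as the paper: the statement is deduced from Proposition \ref{prop:strong_neigh_Em}, the whole content being that finite representability in Hilbert spaces yields Hilbert spaces and that $\cE(\varepsilon')\subset \mathcal H^{N,\varepsilon}$ for a suitable $\varepsilon'>0$, which the paper asserts in one line (``the parallelogram inequality characterizes the Hilbert spaces'') and which you prove by compactness of the Banach--Mazur compactum. One micro-remark: the limit of the $\cE(1/k)$-inequalities is a priori only the one-sided bound $\tfrac12\left(\|x+y\|^2+\|x-y\|^2\right)\leq \|x\|^2+\|y\|^2$, but applying it to the pair $(x+y,x-y)$ gives the reverse bound, so $X_\infty$ indeed satisfies the parallelogram identity as you claim.
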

This is indeed a particular case because, since the parallelogram inequality characterizes the Hilbert spaces, for every $N,\varepsilon$ there is $\varepsilon'>0$ such that $\mathcal H^{N,\varepsilon}$ contains $\cE(\varepsilon')$.

If $\cE$ is a class of Banach spaces and $m \colon G \to (0,\infty]$ is a function, we denote $\cG({\cE},m)$ the set of all equivalence classes of representations $(\pi,E) \in \mathcal R$ which are equivalent to a subrepresentation of $(\pi',E')$ where $E' \in \cE$ and $\|\pi'(g)\| \leq e^{m(g)}$ for all $g$ in $G$.
\begin{prop}\label{prop:strong_neigh_discrete} Let $\cE$ be a class of Banach spaces stable by ultraproducts and $m \colon G\to (0,\infty]$ a function. Assume that $G$ is discrete. Then $\cG(\cE,m)$ is closed and every strong neighbourhood of $\cG(\cE,m)$ contains $\cG(\cE,m+\varepsilon \ell)$ for some $\varepsilon>0$.
\end{prop}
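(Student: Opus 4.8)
The plan is to reduce both assertions to a single device: \emph{diagonally embedding $(\pi,E)$ into an ultraproduct of approximating representations}. The discreteness of $G$ is what makes this clean: $C_c(G)$ consists of the finitely supported functions, compact subsets of $G$ are finite, and any homomorphism $G\to\mathrm{GL}(E)$ is automatically strongly continuous, so an ultraproduct of representations is again a representation with no continuity issue to check.

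First I would isolate the following embedding lemma: if $(\pi,E)$ lies in the closure of a set of representations, each equivalent to a subrepresentation of some $(\rho,F)$ with $F\in\cE$ and $\|\rho(g)\|\le e^{n(g)}$ for some function $n$ (possibly depending on the representation), then $(\pi,E)$ embeds isometrically and equivariantly into an ultraproduct of the spaces $F$. The construction: let $I$ be the directed set of triples $(\mathcal X,Q,\delta)$ with $\mathcal X\subset E$ finite, $Q\subset G$ finite and containing $e$, $\delta>0$; for each $i\in I$ choose from the set a representation $(\rho_i,F_i)$ with $F_i\in\cE$ together with approximants $x\mapsto x_i\in F_i$ ($x\in\mathcal X_i$) witnessing membership in $W_{\mathcal X,Q,\delta}(\pi,E)$, using Remark \ref{rem:equivalence_WxQe} to work with finitely supported measures; fix a cofinal ultrafilter $\cU$ on $I$ and set $F=\prod_\cU F_i\in\cE$ (stability under ultraproducts), $\rho=\prod_\cU\rho_i$. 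One then checks — the routine but slightly tedious part — that $u(x)=(x_i)_\cU$ (with $x_i=0$ when $x\notin\mathcal X_i$) is well-defined, linear, isometric and intertwines $\pi$ with $\rho$; each point follows by applying \eqref{eq:def_WxQe} to a suitable finite tuple of vectors and point masses: $x$ with $\delta_e$ for isometry; $x_1,\dots,x_k$ and $\sum_j a_j x_j$ with the measure $\sum_j a_j\delta_e-\delta_e$, after rescaling to total variation $\le1$, for linearity; $x$ and $\pi(g)x$ with $\delta_g,-\delta_e$, and cofinality to capture all $g$, for equivariance. Finally $\|\rho(g)\|\le\lim_\cU\|\rho_i(g)\|\le\lim_\cU e^{n_i(g)}$.

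With this lemma in hand, closedness of $\cG(\cE,m)$ is immediate: if $(\pi,E)\in\overline{\cG(\cE,m)}$, apply the lemma with every $n_i=m$ to get an isometric equivariant embedding of $(\pi,E)$ into $(\rho,F)$ with $F\in\cE$ and $\|\rho(g)\|\le e^{m(g)}$, i.e. $(\pi,E)\in\cG(\cE,m)$. For the statement on strong neighbourhoods I would argue by contradiction: if $\cF'$ were a strong neighbourhood of $\cG(\cE,m)$ containing no $\cG(\cE,m+\varepsilon\ell)$, pick for each $n\ge1$ a representation $(\sigma_n,D_n)\in\cG(\cE,m+\tfrac1n\ell)\setminus\cF'$; by Lemma \ref{lem:characterization_strong_neigh}\,(\ref{item:strongneig1}) the sequence $(\sigma_n,D_n)$ has an accumulation point $(\pi,E)\notin\cG(\cE,m)$. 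But $(\pi,E)$ then lies in the closure of $\{(\sigma_n,D_n):n\ge j\}$ for every $j$, so, indexing now by $(j,\mathcal X,Q,\delta)$ and choosing for each index some $n\ge j$ with $(\sigma_n,D_n)\in W_{\mathcal X,Q,\delta}(\pi,E)$, the embedding lemma applies with $n_i=m+\tfrac1{n}\ell\le m+\tfrac1j\ell$; cofinality of $\cU$ gives $\lim_\cU\tfrac1j=0$, hence $\|\rho(g)\|\le\lim_\cU e^{m(g)+\frac1j\ell(g)}=e^{m(g)}$, so $(\pi,E)\in\cG(\cE,m)$ — a contradiction.

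I expect the main obstacle to be organizational rather than conceptual: making the diagonal embedding lemma airtight, especially the verification of linearity with honest bookkeeping of the $L_1$-normalization in \eqref{eq:def_WxQe}, and the handling of the two-parameter directed set together with a cofinal ultrafilter that lets the exponents $m+\tfrac1n\ell$ collapse to $m$ in the limit. Everything else is a direct unwinding of the definitions and of Lemma \ref{lem:characterization_strong_neigh}.
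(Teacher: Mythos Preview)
Your proof is correct and follows essentially the same route as the paper. The only organizational difference is that the paper packages your ``embedding lemma'' as a separate result, Theorem~\ref{thm:topology_ultraproducts}, which characterizes the closure of any subset of $\cR$ in terms of subrepresentations of ultraproducts; the proof of Proposition~\ref{prop:strong_neigh_discrete} then simply invokes that theorem, observes that for discrete $G$ the representation ultraproduct $E_\cU$ equals the full Banach space ultraproduct $\prod_\cU E_i$ (hence lies in $\cE$), and concludes exactly as you do.
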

\begin{prop}\label{prop:strong_neigh_Lp} Let $1 < p < \infty$. Then $\cG(L_p,0)$ is closed and every strong neighbourhood of $\cG(L_p,0)$ contains $\cup_{q \in [p-\varepsilon,p+\varepsilon]} \cG(L_q,\varepsilon \ell)$ for some $\varepsilon>0$.
\end{prop}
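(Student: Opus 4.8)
The plan is to deduce Proposition~\ref{prop:strong_neigh_Lp} from the more general Proposition~\ref{prop:strong_neigh_discrete}-style machinery, but since $G$ need not be discrete here one cannot quote that proposition directly; instead I would mimic its proof with the extra continuity bookkeeping, using crucially that $L_p$ spaces interact well with ultraproducts and with perturbation of the exponent. Concretely, the two claims to establish are: (a) $\cG(L_p,0)$ is closed in $\cR$; (b) if $\cF'$ is a strong neighbourhood of $\cG(L_p,0)$, then $\cF'\supset\bigcup_{q\in[p-\varepsilon,p+\varepsilon]}\cG(L_q,\varepsilon\ell)$ for some $\varepsilon>0$.

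For (a), I would take a net $(\pi_i,E_i)$ in $\cG(L_p,0)$ converging to $(\pi,E)$ and show $(\pi,E)\in\cG(L_p,0)$. By definition each $(\pi_i,E_i)$ embeds as a subrepresentation of an isometric representation on a space $F_i$ isometric to an $L_p$-space. The ultraproduct characterization of the topology (the analogue, for general $G$, of what is being set up in \S\ref{sec:topology} and what Remark~\ref{rem:closure_implies_fin_rep+bound} already gives at the level of Banach spaces and of the norm $\|\cdot\|_\cF$) should show that $(\pi,E)$ embeds isometrically and $G$-equivariantly into an ultraproduct $\prod_\cU F_i$ with the ultraproduct action, which is again isometric; and since the class of $L_p$-spaces is stable under ultraproducts (a classical fact, e.g.\ from \cite{dacunhacastellekrivine} or \cite{Heinrich}), $\prod_\cU F_i$ is again an $L_p$-space. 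One subtlety to handle carefully is continuity: the ultraproduct action on $\prod_\cU F_i$ need not be strongly continuous, so I would pass to the subspace on which $g\mapsto\pi(g)x$ is continuous (as in the definition of $\tr E$ earlier), note that $E$ lands inside it, and observe this subspace is still an $L_p$-space or at least that $(\pi,E)$ itself, being a genuine strongly continuous representation, sits inside $\cG(L_p,0)$ with the honest ambient space being the closed $G$-invariant $L_p$-subspace generated by $E$. This gives $(\pi,E)\in\cG(L_p,0)$, so $\cG(L_p,0)$ is closed.

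For (b), I would argue by contradiction using the characterization of strong neighbourhoods in Lemma~\ref{lem:characterization_strong_neigh}(\ref{item:strongneig1}), exactly as in the proof of Proposition~\ref{prop:strong_neigh_Em}. Suppose no such $\varepsilon$ works; then for each $n$ there is $q_n\in[p-1/n,p+1/n]$ and a representation $(\pi_n,E_n)\in\cG(L_{q_n},\frac1n\ell)$ with $(\pi_n,E_n)\notin\cF'$. By Lemma~\ref{lem:characterization_strong_neigh}(\ref{item:strongneig1}) the net $(\pi_n,E_n)$ must have an accumulation point $(\pi,E)$ outside $\cG(L_p,0)$ --- but I will show every accumulation point lies in $\cG(L_p,0)$, a contradiction. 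Indeed $q_n\to p$ and $\frac1n\to 0$, so along an ultrafilter $\cU$ the accumulation point embeds, by the ultraproduct description of the topology, $G$-equivariantly and isometrically into an ultraproduct of isometric representations on $L_{q_n}$-spaces; the operator norms $\|\pi_n(g)\|\le e^{\ell(g)/n}$ all tend to $1$ uniformly on compacts, so the ultraproduct action is isometric; and by a classical fact an ultraproduct of $L_{q_n}$-spaces with $q_n\to p$ is an $L_p$-space (this is where $q\to p$ is used in an essential way — it is the Banach-space-local-theory statement that $L_p$ is finitely representable in $\{L_q:|q-p|<\varepsilon\}$, together with the ultraproduct-of-$L_q$ stability). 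The same continuity caveat as in part (a) applies and is dealt with the same way. Hence the accumulation point is in $\cG(L_p,0)$, contradiction, which proves (b).

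The main obstacle I anticipate is not the Banach-space input — stability of $L_p$ under ultraproducts and the behaviour of $\prod_\cU L_{q_n}$ for $q_n\to p$ are standard — but the interface between the \emph{topology} on $\cR$ and ultraproduct representations in the non-discrete case: one needs the (still-to-be-proved, in \S\ref{sec:topology}) statement that $(\pi,E)$ being an accumulation point of $\cF$ is equivalent to $(\pi,E)$ embedding into a suitable ultraproduct of members of $\cF$, and this is precisely the place where the excerpt warns that ``technically more involved in the general case.'' I would therefore phrase the proof so that it invokes whatever ultraproduct characterization theorem \S\ref{sec:topology} establishes (the analogue of the discrete-group statement), plus the observation from Remark~\ref{rem:closure_implies_fin_rep+bound} that closure forces finite representability of the Banach space and control of the relevant norms, and then the $L_p$-specific facts slot in cleanly. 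The only genuinely delicate point left is ensuring strong continuity of the limiting representation, handled by restricting to the continuity subspace of the ultraproduct and checking it is still an $L_p$-space (which it is, being a $1$-complemented — via conditional expectation — subspace, or simply a closed subspace that one re-recognizes as $L_p$ after noting the ambient honest representation can be taken on the closed invariant $L_p$-subspace generated by the image of $E$).
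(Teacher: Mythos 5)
Your proposal follows essentially the same route as the paper: the paper proves Proposition \ref{prop:strong_neigh_Lp} by repeating the argument of Proposition \ref{prop:strong_neigh_discrete} (the contradiction argument of Proposition \ref{prop:strong_neigh_Em} combined with the ultraproduct characterization of the closure, Theorem \ref{thm:topology_ultraproducts}), replacing the use of discreteness by Proposition \ref{prop:ultraproduct_Lp}, which says exactly what you need: an ultraproduct $(\pi_\cU,E_\cU)$ of representations on $L_{q_i}$-spaces with $q_i\to p$ and $\|\pi_i(g)\|\to 1$ uniformly on compacta is an isometric representation on an $L_p$-space. Two caveats about the one delicate point you single out (strong continuity of the ambient representation). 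First, the contractive projection onto the continuity subspace $E_\cU$ is not produced by a conditional expectation: it is obtained as a weak operator topology accumulation point of the operators $\pi(f_n)$ for an approximate unit $(f_n)$ in $C_c(G)$, using reflexivity of $\prod_\cU E_i$ (here $1<p<\infty$ is used); since the norms tend to $1$ this projection has norm $1$, and then Tzafriri's theorem \cite{tzafriri} (a $1$-complemented subspace of an $L_p$-space is isometric to an $L_p$-space) identifies $E_\cU$ as an $L_p$-space --- this is precisely the content of Proposition \ref{prop:ultraproduct_Lp}, which you should invoke or reprove rather than assert. Second, your fallback option --- ``take the closed $G$-invariant subspace generated by the image of $E$'' --- does not work: a closed subspace of $L_p$ is in general not isometric to an $L_p$-space, and the definition of $\cG(L_p,0)$ requires a strongly continuous ambient representation on a genuine $L_p$-space, so the $1$-complementation argument is the one you must use.
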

We postpone the proof of these propositions to the end of the section, because their proof requires the material in the rest of the section.

\subsection{Ultraproduct of Banach space representations}
Let $G$ be a locally compact group. We define ultraproducts in the category of continuous Banach space linear representations of $G$ in the same way as for unitary representations in \cite{CherixCowlingStraub}.

Let $(\pi_i)_{i \in I}$ be a family of representations of $G$ on Banach spaces $E_i$. Assume also that $\sup_i \|\pi_i(g)\|_{B(E_i)}$ is bounded on compact subsets of $G$. Let $\cU$ be an ultrafilter on $I$. Let $\prod_{\cU} E_i$ be the Banach space ultraproduct of $E_i$, and, for $g \in G$, $\pi(g)$ the ultraproduct of $\pi_i(g)$, which makes sense because $\sup_i \|\pi_i(g)\|_{B(E_i)} < \infty$ (\S \ref{subsection:ultraproducts}). For $f \in C_c(G)$ we define $\pi(f)$ as the ultraproduct of $\pi_i(f)$, which makes sense because $\sup_i \|\pi_i(g)\|_{B(E_i)}$ is bounded on the support of $f$.

Then $\pi$ is a group morphism from $G$ to the invertible operators on $\prod_\cU E_i$. However it is in general not strongly continuous, and to fix this we consider $E_\cU$ the subspace of $\prod_\cU E_i$ defined as the closure of the space spanned by 
\[\{ \pi(f) x, x \in \prod_\cU E_i, f \in C_c(G)\}.\] It is straightforward (see Lemma \ref{lem=characterization_XU} for a stronger statement) that the space $E_\cU$ is invariant by $\pi(g)$ for all $g \in G$, and the restriction of $\pi$ to $E_\cU$ is a strongly continuous representation of $G$. We define the ultraproduct of $(\pi_i,E_i)$ with respect to $\cU$ as $(\pi_\cU,E_\cU)$, where  $\pi_\cU$ is the restriction of $\pi$ to $E_\cU$.

We also have the following characterization of $E_\cU$ (see \S \ref{subsection:groupG} for the terminology).
\begin{lem}\label{lem=characterization_XU} Let $f_n \in C_c(G)$ be an approximate unit, and let $x  \in \prod_\cU E_i$ with representative $(x_i)_i \in \prod_i E_i$. Then the following are equivalent
\begin{enumerate}
\item\label{item:restriction1}$x \in E_\cU$.
\item\label{item:restriction2}$\lim_n \|\pi(f_n) x - x\|=0$.
\item\label{item:restriction3} For every $\varepsilon>0$, there is a neighbourhood $U$ of $e$ in $G$ such that \[\lim_\cU \sup_{g \in U} \|\pi_i(g) x_i - x_i\| \leq \varepsilon.\]
\end{enumerate}
\end{lem}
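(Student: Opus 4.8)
The statement to prove is Lemma~\ref{lem=characterization_XU}, characterizing the subspace $E_\cU \subset \prod_\cU E_i$ on which the ultraproduct representation becomes strongly continuous. I will prove the cycle of implications $(\ref{item:restriction2}) \Rightarrow (\ref{item:restriction1}) \Rightarrow (\ref{item:restriction3}) \Rightarrow (\ref{item:restriction2})$, with $(\ref{item:restriction1})$ being the definition of $E_\cU$ as the closed span of the $\pi(f)x$. The only genuinely delicate point is the passage through the ultraproduct: norms in $\prod_\cU E_i$ are $\lim_\cU$ of norms in $E_i$, so I must be careful to phrase uniform estimates so that they pass to the limit over $\cU$. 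Throughout, fix $C = \sup_i \sup_{g \in S} \|\pi_i(g)\|$, finite by hypothesis, so that all $\pi_i(g)$ for $g$ in a fixed compact set are uniformly bounded.

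\textbf{$(\ref{item:restriction2}) \Rightarrow (\ref{item:restriction1})$.} If $\lim_n \|\pi(f_n)x - x\| = 0$ then $x$ is a norm-limit of elements $\pi(f_n)x$, each of which lies in the span of $\{\pi(f)y\}$; hence $x \in E_\cU$ by definition. This direction is immediate.

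\textbf{$(\ref{item:restriction1}) \Rightarrow (\ref{item:restriction3})$.} First I check $(\ref{item:restriction3})$ for elements of the form $x = \pi(f)y$ with $f \in C_c(G)$. For a neighbourhood $U$ of $e$ and $g \in U$, one has $\pi_i(g)\pi_i(f)y_i - \pi_i(f)y_i = \pi_i(g_*f - f)y_i$ where $g_*f$ denotes left translation; since $f$ is continuous with compact support, $\|g_*f - f\|_{L_1} \to 0$ as $U$ shrinks, uniformly in $g \in U$, and the supports stay in a fixed compact set $Q$, so $\sup_{g\in U}\|\pi_i(g_*f - f)y_i\| \le (\sup_{h \in Q}\|\pi_i(h)\|)\,\|g_*f-f\|_{L_1}\,\|y_i\|$, which is bounded by $\varepsilon$ once $U$ is small enough, uniformly in $i$; taking $\lim_\cU$ gives $(\ref{item:restriction3})$ for $x = \pi(f)y$. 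The set of $x \in \prod_\cU E_i$ satisfying $(\ref{item:restriction3})$ is easily seen to be a closed linear subspace: closedness under the (uniform, over shrinking $U$) estimate is a standard $3\varepsilon$-argument using that $\|\pi_i(g) - \mathrm{Id}\| \le 1 + C$ on $S$. Hence it contains the closed span $E_\cU$.

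\textbf{$(\ref{item:restriction3}) \Rightarrow (\ref{item:restriction2})$.} Suppose $(\ref{item:restriction3})$ holds. Given $\varepsilon > 0$, pick $U$ as in $(\ref{item:restriction3})$, so $\lim_\cU \sup_{g\in U}\|\pi_i(g)x_i - x_i\| \le \varepsilon$. For $n$ large the approximate unit $f_n$ is supported in $U$ and satisfies $\int f_n = \int|f_n| = 1$; then for each $i$,
\[
\|\pi_i(f_n)x_i - x_i\| = \Bigl\| \int f_n(g)\bigl(\pi_i(g)x_i - x_i\bigr)\,dg \Bigr\| \le \int |f_n(g)|\,\|\pi_i(g)x_i - x_i\|\,dg \le \sup_{g \in U}\|\pi_i(g)x_i - x_i\|.
\]
Taking $\lim_\cU$ over $i$ gives $\|\pi(f_n)x - x\| = \lim_\cU \|\pi_i(f_n)x_i - x_i\| \le \varepsilon$ for all such $n$; since $\varepsilon$ was arbitrary, $\lim_n \|\pi(f_n)x - x\| = 0$, which is $(\ref{item:restriction2})$. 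I should note at the end that the equivalence in particular shows $E_\cU$ does not depend on the choice of approximate unit, and that the definition of $E_\cU$ via $C_c(G)$ is consistent. The main obstacle, such as it is, is purely bookkeeping: making sure every estimate is stated uniformly in $i \in I$ before applying $\lim_\cU$, and handling the translation continuity $\|g_*f - f\|_{L_1} \to 0$ together with the uniform operator bound on a compact set.
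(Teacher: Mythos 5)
Your proof is correct and follows essentially the same route as the paper: the trivial implication (2)$\Rightarrow$(1), the bound $\|\pi_i(f_n)x_i-x_i\|\le\sup_{g\in U}\|\pi_i(g)x_i-x_i\|$ for (3)$\Rightarrow$(2), and for (1)$\Rightarrow$(3) the $L_1$-continuity of translations applied to elements $\pi(f)y$ combined with a $3\varepsilon$/closed-subspace argument, which is just a repackaging of the paper's direct approximation of $x$ by $\sum_k\pi(h_k)y^{(k)}$. Only cosmetic fix needed: in the closedness step the uniform operator bound should be quoted for $g$ in a fixed compact neighbourhood $U_0$ of $e$ (available since $\sup_i\|\pi_i(g)\|$ is bounded on compact sets), not for $g\in S$.
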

\begin{proof} The (\ref{item:restriction2})$\implies$(\ref{item:restriction1}) direction is obvious~: if $\lim_n \|\pi(f_n) x - x\|$ is equal to $0$, then $x = \lim_n \pi(f_n) x$ belongs to $E_\cU$. So is (\ref{item:restriction3})$\implies$(\ref{item:restriction2}) because if $Q_n$ denotes the support of $f_n$,
\[ \|\pi(f_n) x - x\| = \lim_\cU \|\pi_i(f_n) x_i - x_i\| \leq \lim_\cU \sup_{g \in Q_n} \|\pi_i(g) x_i-x_i\|.\]

Let us prove the implication (\ref{item:restriction1})$\implies$(\ref{item:restriction3}). Assume that $x \in E_\cU$. Fix $U_0$ a compact neighbourhood of the identity in $G$, and define $M_0 =  \sup_{g \in U_0} \sup_i \|\pi_i(g)\|$. Let $\varepsilon>0$, and take $y = \sum_k \pi(h_k) y^{(k)}$ with $h_k \in C_c(G)$ and $y^{(k)} \in \prod_{\cU} E_i$ such that $\| y - x\|<\varepsilon$. Let $(y^{(k)}_{i})_i \in \prod_i E_i$ be a representative of $y^{(k)}$, so that $(y_i = \sum_k \pi_i(h_k) y^{(k)}_{i})_i$ is a representative of $y$. Then for every $g \in G$,
\[ \|\pi_i(g) x_i- x_i\| \leq \sum_k \|\pi(\lambda_g h_k -h_k) y^{(k)}_{i}\| + (1+\|\pi(g)\|) \|y_i - x_i\|\]
where $\lambda_g h_k(g') = h_k(g^{-1} g')$. Let also $Q \subset G$ a compact subset which contains the support of $h_k$ and $\lambda_g h_k$ for all $g \in U_0$ and all $k$. Let $M= \sup_{g \in Q} \sup_i \|\pi_i(g)\|$, so that $\| \pi_i(\lambda_g h_k -h_k) \| \leq M \|\lambda_g h_k - h_k\|_{L_1(G)}$. For $g \in U_0$ the previous inequality becomes
\[ \|\pi_i(g) x_i- x_i\| \leq M \sum_k \|\lambda_g h_k -h_k\|_{L_1(G)} \|y^{(k)}_{i}\| + (1+M_0) \|y_i - x_i\|.\]
By continuity of the translations on $L_1(G)$, there exists $U \subset U_0$ a neighbourhood of the identity such that
\[ M \sum_k \|\lambda_g h_k -h_k\|_{L_1(G)} \|y^{(k)}\| \leq \varepsilon\]
for every $g \in U$. Taking the supremum on $U$ and taking the limit in the preceding inequality we get
\[  \lim_\cU \sup_{g \in U} \|\pi_i(g) x_i- x_i\| \leq (2+M_0)\varepsilon.\]
This proves (\ref{item:restriction3}).
\end{proof}
\begin{lem}\label{lem:ultraproduct_convergence_on_compactly_supported_measures}
Let $x_k \in E_\cU$, $1 \leq k \leq n$ be a finite family with representative $(x_{k,i})_i$, and $Q \subset G$ a compact subset. If $M(Q)_1$ denotes the set of all complex measures supported in $Q$ and with total variation $\leq 1$, we have
\[\lim_\cU \sup_{m_1,\dots,m_n \in M(Q)_1} |\|\sum_{k=1}^n \pi_i(m_k) x_{k,i}\| - \| \sum_{k=1}^n \pi_\cU(m_k) x_{k} \| | =0.\]

Also, for every $x = (x_i)_i \in E_\cU$ and every compactly supported complex measure on $G$, $\int \pi_\cU(g) x dm(g) = (\int \pi_i(g) x_i dm(g))_\cU$. In particular $\pi_\cU(f) = (\pi_i(f))_\cU$ for every $f \in C_c(G)$.
\end{lem}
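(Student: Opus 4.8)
The plan is to deduce both assertions from a single approximation step: for a compactly supported complex measure $m$, replace it by a finitely supported ``Riemann sum'' $\Phi(m)$ carried by a \emph{fixed} finite set, and control the error using Lemma \ref{lem=characterization_XU}(\ref{item:restriction3}). Concretely, fix a relatively compact symmetric open neighbourhood $U_0$ of $e$ and set $M=\sup_{g\in Q\cup\overline{U_0}}\sup_i\|\pi_i(g)\|<\infty$, where $Q$ is a fixed compact set containing the support of the measure(s) involved; all neighbourhoods below are taken inside $U_0$. Given $U\subset U_0$, choose a symmetric open $U'\ni e$ with $U'U'\subset U$, cover $Q$ by translates $g_1U',\dots,g_JU'$ with $g_j\in Q$, let $A_j=(Q\cap g_jU')\setminus\bigcup_{l<j}A_l$, discard empty pieces, pick $h_j\in A_j$, and put $F=\{h_1,\dots,h_J\}$ and $\Phi(m)=\sum_j m(A_j)\delta_{h_j}$; note $\Phi(m)$ is supported in $F$ and $\|\Phi(m)\|_{TV}\le\|m\|_{TV}$. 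Since $g\in A_j$ and $h_j\in A_j$ both lie in $g_jU'$, we have $h_j^{-1}g\in U'U'\subset U$, whence for any strongly continuous representation $\rho$ of $G$ with $\|\rho(g)\|\le M$ on $Q$ and any vector $v$,
\[\|\rho(m)v-\rho(\Phi(m))v\|\le\sum_j\int_{A_j}\|\rho(h_j)\|\,\|\rho(h_j^{-1}g)v-v\|\,d|m|(g)\le M\,\|m\|_{TV}\,\sup_{h\in U}\|\rho(h)v-v\|.\]

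I would first prove the second assertion. Fix $x=(x_i)_\cU\in E_\cU$, a compactly supported complex measure $m$, and $\varepsilon>0$; by Lemma \ref{lem=characterization_XU}(\ref{item:restriction3}) there is $U\subset U_0$ with $\lim_\cU\sup_{h\in U}\|\pi_i(h)x_i-x_i\|\le\varepsilon$, and since $\|\pi_\cU(h)x-x\|=\lim_\cU\|\pi_i(h)x_i-x_i\|$ this also forces $\sup_{h\in U}\|\pi_\cU(h)x-x\|\le\varepsilon$. Build $\Phi$ from this $U$. Because $\Phi(m)$ is finitely supported and $\pi_\cU(g)$ is by construction the restriction to $E_\cU$ of the ultraproduct operator $\prod_\cU\pi_i(g)$, one has $\pi_\cU(\Phi(m))x=\sum_j m(A_j)\pi_\cU(h_j)x=(\sum_j m(A_j)\pi_i(h_j)x_i)_\cU=(\pi_i(\Phi(m))x_i)_\cU$. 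Applying the displayed estimate to $\rho=\pi_\cU$, $v=x$ on one side, and to $\rho=\pi_i$, $v=x_i$ followed by $\lim_\cU$ on the other, I get $\|\pi_\cU(m)x-(\pi_i(m)x_i)_\cU\|\le M\|m\|_{TV}(\sup_{h\in U}\|\pi_\cU(h)x-x\|+\lim_\cU\sup_{h\in U}\|\pi_i(h)x_i-x_i\|)\le 2M\|m\|_{TV}\varepsilon$, and letting $\varepsilon\to0$ yields $\int\pi_\cU(g)x\,dm(g)=\pi_\cU(m)x=(\pi_i(m)x_i)_\cU$. The case $m=f\,dg$, $f\in C_c(G)$, is the last sentence.

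For the first assertion I would fix $\varepsilon>0$ and apply Lemma \ref{lem=characterization_XU}(\ref{item:restriction3}) to $x_1,\dots,x_n$ simultaneously to get one $U\subset U_0$ with $\lim_\cU\sup_{h\in U}\|\pi_i(h)x_{k,i}-x_{k,i}\|\le\varepsilon$ and $\sup_{h\in U}\|\pi_\cU(h)x_k-x_k\|\le\varepsilon$ for all $k$, and build $\Phi$. The displayed estimate (with $\|m\|_{TV}\le1$) bounds $|\,\|\sum_k\pi_i(m_k)x_{k,i}\|-\|\sum_k\pi_i(\Phi(m_k))x_{k,i}\|\,|$ by $M\sum_k\sup_{h\in U}\|\pi_i(h)x_{k,i}-x_{k,i}\|$, and likewise on the $\pi_\cU$ side by $Mn\varepsilon$, \emph{both uniformly over} $\vec m\in M(Q)_1^n$. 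So it only remains to bound $\lim_\cU\sup_{\vec\mu\in\mathcal K}\psi_i(\vec\mu)$, where $\mathcal K$ is the set of $n$-tuples of measures supported in the fixed finite set $F$ with total variation $\le1$ and $\psi_i(\vec\mu)=|\,\|\sum_k\pi_i(\mu_k)x_{k,i}\|-\|\sum_k\pi_\cU(\mu_k)x_k\|\,|$. The set $\mathcal K$ is compact (a product of finitely many $\ell^1$-balls in $\C^{|F|}$); each $\psi_i$ is $L$-Lipschitz on it for the metric $\sum_k\|\mu_k-\mu'_k\|_{TV}$ with $L=M(\sup_i\max_k\|x_{k,i}\|+\max_k\|x_k\|)$ independent of $i$ (using $F\subset Q$, so $\|\pi_i(\mu)\|\le M\|\mu\|_{TV}$, and $\sup_i\|x_{k,i}\|<\infty$); and for each fixed $\vec\mu\in\mathcal K$ the second assertion gives $\lim_\cU\psi_i(\vec\mu)=0$. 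Taking a finite $\delta$-net $\vec\mu^{(1)},\dots,\vec\mu^{(P)}$ of $\mathcal K$, the set $\{i:\psi_i(\vec\mu^{(p)})\le\delta\text{ for all }p\}$ lies in $\cU$ and on it $\sup_{\vec\mu\in\mathcal K}\psi_i(\vec\mu)\le(1+L)\delta$, so $\lim_\cU\sup_{\vec\mu\in\mathcal K}\psi_i(\vec\mu)\le(1+L)\delta$, which is $0$ since $\delta$ is arbitrary and $L$ does not depend on $\delta$. Combining, $\lim_\cU\sup_{\vec m}|\,\|\sum_k\pi_i(m_k)x_{k,i}\|-\|\sum_k\pi_\cU(m_k)x_k\|\,|\le 2Mn\varepsilon$, and since $M$ does not depend on $\varepsilon$, letting $\varepsilon\to0$ finishes.

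The step I expect to be the main obstacle is the uniformity over $M(Q)_1$ in the first assertion: pointwise in $\vec m$ it reduces to the second assertion, but nothing obvious prevents the convergence from degrading as the measures oscillate. The resolution is the two-stage reduction above — first to finitely supported measures carried by a \emph{fixed} finite set $F$, which is uniform in $\vec m$ and is exactly where membership in $E_\cU$ (through Lemma \ref{lem=characterization_XU}(\ref{item:restriction3})) is used, and then an equi-Lipschitz-plus-finite-net argument on the now finite-dimensional compact space $\mathcal K$, for which the pointwise $\cU$-convergence supplied by the second assertion suffices.
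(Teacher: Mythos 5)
Your proof is correct and follows essentially the same route as the paper: reduce, via Lemma \ref{lem=characterization_XU}(\ref{item:restriction3}) and a Riemann-sum partition of $Q$, to finitely supported measures on a fixed finite set, where the convergence is uniform on the (compact, finite-dimensional) ball of such measures. The only differences are cosmetic: you prove the second assertion first and deduce the first from it, whereas the paper goes the other way, and you spell out with an equi-Lipschitz/finite-net argument the uniformity that the paper dismisses as ``easily seen''.
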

\begin{proof}
The formula $\pi_{\cU}(m_k)x_k = (\pi_i(m_k) x_{k,i})_\cU$ is obvious by linearity when $m_k$ is a finitely supported measure by definition of $\pi_{\cU}$, and it implies \[\lim_\cU \|\sum_{k=1}^n \pi_i(m_k) x_{k,i}\|= \| \sum_{k=1}^n \pi_\cU(m_k) x_{k} \|.\] The convergence is easily seen to be uniform among all measures of total variation $\leq 1$ and support contained in a fixed finite subset of $G$. We will reduce to finitely supported measures with the help of the preceding lemma. Let $\varepsilon>0$. By (\ref{item:restriction3}) in Lemma \ref{lem=characterization_XU} there is a neighbourhood $U$ of the identity such that 
\[\lim_\cU \sup_{g \in U} \|\pi_i(g) x_{k,i} - x_{k,i}\| \leq \varepsilon\]
for every $k \in \{1,\dots,n\}$. By compactness of $Q$ there exists a finite subset $Q'=\{g_1,\dots,g_t\} \subset Q$ such that $Q \subset \cup_{s=1}^t g_s U$. Let us extract a partition $Q=Q_1\cup \dots \cup Q_t$ with $Q_s \subset g_s U$. If we denote $M=\sup_{(\pi,E) \in \cR} \sup_{g \in Q} \|\pi_i(g)\|$, we deduce that
\[ \lim_\cU \sup_{g \in Q_s} \|\pi_i(g) x_{k,i} - \pi_i(g_s) x_{k,i}\| \leq M \varepsilon.\]
Let $m_k$ be a complex measure supported in $Q$ and with total variation less than $1$, and define $m'_k = \sum_p m_k(Q_p) \delta_{g_p}$. It is a signed measure supported in $Q'$ and with total variation less than $1$. We have
\[ \| \pi_i(m_k) x_{k,i}-\pi_i(m'_k) x_{k,i}\| \leq \max_k \sup_{g \in Q_k} \|\pi_i(g) x_{k,i} - \pi_i(g_k) x_{k,i}\|,\]
and therefore we get (using also the same estimates for $\pi_{\cU}$)
\begin{multline*}\lim_\cU \sup_{m_1,\dots,m_n \in M(Q)_1} |\|\sum_k \pi_i(m_k) x_{k,i} \| - \|\sum_k \pi_{\cU}(m_k)x_k\| | \\ \leq \lim_\cU \sup_{m'_k \in M(Q')_1} |\| \sum_k \pi_i(m_k') x_{k,i}\| - \| \sum_k \pi_{\cU}(m_k')x_k\| | + 2nM\varepsilon = 2nM\varepsilon.\end{multline*}
The last equality is because $Q'$ is finite. By taking the limit as $\varepsilon$ goes to $0$ we get
\[\lim_\cU \sup_{m_1,\dots,m_n \in M(Q)_1} |\|\sum_{k=1}^n \pi_i(m_k) x_{k,i}\| - \| \sum_{k=1}^n \pi_\cU(m_k) x_{k} \| | =0.\]

We can now move to the second part. Take $m'$ a finitely supported measure such that $\| \pi_\cU(m) x - \pi_\cU(m')x \|<\varepsilon$. Then since $\pi_{\cU}(m')x = (\pi_i(m') x_i)_\cU$
\[ \|\pi_\cU(m) x - (\pi_i(m) x_i)_\cU\| \leq \|\pi_\cU(m) x - \pi_\cU(m') x\| + \|( \pi_i(m') x_i-\pi_i(m) x_i)_\cU\| \leq 2\varepsilon,\]
because $\|( \pi_i(m') x_i-\pi_i(m) x_i)_\cU\| = \|\pi_\cU(m) x - \pi_\cU(m') x\|$ by the first part of the lemma with $n=1$. We conclude by taking the limit $\varepsilon \to 0$.
\end{proof}
We will use the following standard Banach algebraic lemma.
\begin{lem} Assume that $\prod_{\cU} E_i$ is reflexive. Then $E_\cU$ is $M$-complemented in $\prod_{\cU} E_i$, where $M$ is the infimum over all neighbourhoods $V$ of the identity in $G$ of $\lim_\cU \sup_{g \in V} \|\pi_i(g)\|$.
\end{lem}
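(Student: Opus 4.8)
We must show: if $\prod_\cU E_i$ is reflexive, then $E_\cU$ is $M$-complemented in $\prod_\cU E_i$, where $M = \inf_V \lim_\cU \sup_{g\in V}\|\pi_i(g)\|$ over neighbourhoods $V$ of $e$.

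**The plan.** The natural projection onto $E_\cU$ should come from averaging $\pi(f_n)$ along the approximate unit $(f_n)$ and then taking a limit — but $\pi(f_n)$ need not converge in operator norm, so I would instead take a weak limit, exploiting reflexivity. Concretely: let $(f_n)\subset C_c(G)$ be an approximate unit with supports shrinking to $e$. For each $n$, $\pi(f_n)\in B(\prod_\cU E_i)$ has norm at most $\sup_{g\in \mathrm{supp}(f_n)}\|\pi(g)\| = \lim_\cU \sup_{g\in \mathrm{supp}(f_n)}\|\pi_i(g)\|$ (using Lemma~\ref{lem:ultraproduct_convergence_on_compactly_supported_measures} for the norm computation of ultraproduct operators, or directly the submultiplicativity), and these norms decrease toward $M$ as $n\to\infty$. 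Since $\prod_\cU E_i$ is reflexive, $B(\prod_\cU E_i)$ is the dual of the projective tensor product $\prod_\cU E_i \,\widehat\otimes\, (\prod_\cU E_i)^*$ — more to the point, the closed ball of radius $M+1$ in $B(\prod_\cU E_i)$ is weak-$*$ compact and metrizable issues aside, I can extract a subnet (or use a cofinal ultrafilter on $\N$) along which $\pi(f_n)$ converges in the weak operator topology to some operator $P$. That the WOT limit exists is where reflexivity is genuinely used: on a reflexive space bounded sets are weakly compact, so for each $x$ the net $\pi(f_n)x$ has a weak cluster point, and a diagonal/ultrafilter argument upgrades this to a single operator $P$. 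Then $\|P\|\le M$.

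**Identifying $P$ as a projection onto $E_\cU$.** First, $Px\in E_\cU$ for every $x$: by Lemma~\ref{lem=characterization_XU}, $E_\cU$ is weakly closed (it is norm-closed and a subspace, hence weakly closed), and $\pi(f_n)x\in E_\cU$ for all $n$ since $\pi(f_n)x$ is in the span of the defining generators; so the weak limit $Px$ lies in $E_\cU$. Second, $P$ acts as the identity on $E_\cU$: if $x\in E_\cU$ then by Lemma~\ref{lem=characterization_XU}\eqref{item:restriction2}, $\|\pi(f_n)x - x\|\to 0$ in norm, so certainly $\pi(f_n)x\to x$ weakly, giving $Px=x$. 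Combining, $P^2=P$, $\operatorname{ran}(P)=E_\cU$, and $\|P\|\le M$, so $E_\cU$ is $M$-complemented. Finally, I should check $M\ge 1$ if $E_\cU\neq 0$ (since a nonzero projection has norm $\ge 1$), but this is automatic from $\pi_i(e)=\mathrm{id}$, so $\sup_{g\in V}\|\pi_i(g)\|\ge 1$ always; and if $E_\cU=\{0\}$ the statement is vacuous.

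**Main obstacle.** The delicate point is constructing the operator $P$ as a genuine WOT-limit: the net $(\pi(f_n))_n$ is only norm-bounded and lives in a possibly non-metrizable WOT. The clean fix is to fix a cofinal ultrafilter $\mathcal V$ on the directed set of neighbourhoods of $e$ (or on $\N$, indexing the $f_n$) and define $Px := \operatorname{w\text{-}lim}_{\mathcal V}\pi(f_V)x$, where the weak limit exists in the reflexive space $\prod_\cU E_i$ because $\{\pi(f_V)x\}$ is bounded and reflexive spaces have weakly compact balls. Linearity of $P$ is immediate, boundedness follows since $\|Px\|\le \liminf \|\pi(f_V)x\| \le M\|x\|$ (weak lower semicontinuity of the norm combined with the bound on $\|\pi(f_V)\|$ that tends to $M$ along $\mathcal V$ after passing to a cofinal ultrafilter refining the one that realizes the infimum $M$). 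Everything else is then routine bookkeeping using the two preceding lemmas.
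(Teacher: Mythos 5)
Your proof is correct and follows essentially the same route as the paper: bound $\|\pi(f_n)\|$ by $\lim_\cU\sup_{g\in Q_n}\|\pi_i(g)\|$ along an approximate unit, use reflexivity to extract a weak operator topology accumulation point $P$ of the net $(\pi(f_n))$ with $\|P\|\leq M$, and conclude that $P$ is a projection onto $E_\cU$ since its range lies in $E_\cU$ and $\pi(f_n)x\to x$ in norm for $x\in E_\cU$ by Lemma \ref{lem=characterization_XU}. Your pointwise ultrafilter-limit construction of $P$ is just a more explicit way of producing the same accumulation point, so there is nothing substantively different to flag.
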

\begin{proof} Let $(f_n)$ be an approximate unit in $C_c(G)$, and $Q_n$ the support of $f_n$. Then $\|\pi(f_n)\| \leq \lim_\cU \sup_{g \in Q_n} \|\pi_i(g)\|$. Therefore $\limsup_n \|\pi(f_n)\| \leq M$. Since $\prod_{\cU} E_i$ is reflexive, the balls in $B(\prod_\cU E_i)$ are compact for the weak operator topology. So there exists $P \in B(\prod_\cU E_i)$ an accumulation point, in the weak operator topology, of the net $(\pi(f_n))$. It has norm $\|P\|\leq M$. The image of $P$ is contained in $E_\cU$ because this is the case for $\pi(f_n)$ for all $n$. Moreover, the restriction of $P$ to $E_\cU$ is the identity because (Lemma \ref{lem=characterization_XU}) $\pi(f_n) x$ converges in norm to $x$ for every $x \in E_\cU$. Therefore $P$ is a projection in $E_\cU$ of norm $M$, as requested.
\end{proof}
We deduce the following.
\begin{prop}\label{prop:ultraproduct_Lp} Let $1<p<\infty$. Assume that $E_i$ is an $L_{p_i}$-space with $\lim_\cU p_i = p$, and that $\lim_{\cU} \|\pi_i(g)\|=1$ uniformly on compact subsets of $G$. Then $(\pi_\cU,E_\cU)$ is an isometric representation on an $L_p$-space.
\end{prop}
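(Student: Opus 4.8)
The plan is to recognise $\prod_\cU E_i$ as an $L_p$-space, to transfer this structure to the strongly continuous part $E_\cU$ via a contractive projection, and then to read off that $\pi_\cU$ acts by surjective isometries from the isometric embedding $\prod_\cU B(E_i)\hookrightarrow B(\prod_\cU E_i)$ recalled in \S\ref{subsection:ultraproducts}. The first point is immediate: since $\lim_\cU p_i = p\in(1,\infty)$ by hypothesis, the theorem of Dacunha-Castelle and Krivine \cite{dacunhacastellekrivine} identifies $\prod_\cU E_i$ isometrically with an $L_p$-space $L_p(\nu)$, which is reflexive because $1<p<\infty$.

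Next I would invoke the lemma immediately preceding this proposition. Reflexivity of $\prod_\cU E_i$ makes $E_\cU$ $M$-complemented in it, where $M$ is the infimum over neighbourhoods $V$ of $e$ of $\lim_\cU\sup_{g\in V}\|\pi_i(g)\|$. The assumption $\lim_\cU\|\pi_i(g)\|=1$ uniformly on compact subsets of $G$ forces $\lim_\cU\sup_{g\in V}\|\pi_i(g)\|=1$ for every compact neighbourhood $V$ of $e$: it is $\le 1$ by the uniform convergence, and $\ge 1$ since $\pi_i(e)=\mathrm{id}$. Hence $M=1$, so $E_\cU$ is the range of a norm-one projection on $L_p(\nu)$, and by the classical theorem on contractively complemented subspaces of $L_p$-spaces (Ando for $1<p<\infty$, $p\neq 2$; trivial for $p=2$, where the $L_p$-spaces are exactly the Hilbert spaces), $E_\cU$ is isometric to an $L_p$-space.

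Finally I would check that $\pi_\cU$ is isometric. For $g\in G$, the operator $\pi(g)$ on $\prod_\cU E_i$ is the ultraproduct of the $\pi_i(g)$, so the isometric embedding $\prod_\cU B(E_i)\hookrightarrow B(\prod_\cU E_i)$ gives $\|\pi(g)\|_{B(\prod_\cU E_i)}=\lim_\cU\|\pi_i(g)\|=1$; since $E_\cU$ is $\pi(g)$-invariant for all $g$, the restrictions $\pi_\cU(g)$ and $\pi_\cU(g^{-1})$ are mutually inverse operators on $E_\cU$, each of norm $\le 1$, and are therefore surjective isometries. Thus $(\pi_\cU,E_\cU)$ is an isometric representation on an $L_p$-space. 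The only non-routine ingredients are the two structural facts about $L_p$-spaces used above --- the Dacunha-Castelle--Krivine description of their ultraproducts and the Ando-type theorem on contractive projections --- so I expect the only real point to be ensuring $M=1$, so that Ando's theorem applies to $E_\cU$ itself and not merely to the ambient $\prod_\cU E_i$; everything else is bookkeeping with results already established in this section.
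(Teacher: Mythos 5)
Your proposal is correct and follows essentially the same route as the paper: identify $\prod_\cU E_i$ as a (reflexive) $L_p$-space, use the preceding lemma (with $M=1$, exactly as you argue) to obtain a norm-one projection onto $E_\cU$, and invoke the contractive-projection theorem for $L_p$-spaces (the paper cites Tzafriri where you cite Ando) to conclude, the isometry of $\pi_\cU$ being checked as you do. The only cosmetic difference is the reference for the ultraproduct step: since the exponents $p_i$ vary with $i$, the paper appeals to the proof of Heinrich's Theorem 3.3(ii) as applying without change rather than quoting Dacunha-Castelle--Krivine directly, but this does not affect the argument.
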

\begin{proof} It is clear that $\pi_\cU$ is an isometric representation. It is well-known that $\prod_\cU E_i$ is an $L_p$ space. For example the proof of \cite[Theorem 3.3 (ii)]{Heinrich} applies without a change. In particular it is reflexive, and by the preceding lemma, $E_\cU$ is a $1$-complemented subspace of an $L_p$ space, and therefore is isometric to an $L_p$ space by \cite{tzafriri}.
\end{proof}
\subsection{Ultraproducts of affine actions}
Let $(\sigma_i)_{i \in I}$ be a family of affine actions of $G$ on a Banach space $E_i$, with linear part $\pi_i$ and translation part $b_i \colon G \to E_i$. This means that $\sigma_i(g) x = \pi_i(g) x + b_i(g)$ for all $x \in E_i$. We assume that $\sup_i \|\pi_i(g)\|$ is bounded on compact subsets of $G$. Let $\cU$ be an ultrafilter on $I$. We wish to define the ultraproduct of $\sigma_i$ as the continuous affine action with linear part the ultraproduct of $\pi_i$ and translation part $b(g) = (b_i(g))_{\cU} \in \prod_{\cU} E_i$. An obvious necessary condition is that $(b_i(g))_{i \in I}$ is bounded and that $b$ is continuous at $0$. The following proposition shows that this is not far from being sufficient. 

\begin{prop}\label{prop:ultraproduct_affine} Let $\pi_i$ be a family of representations of $G$ on Banach spaces $E_i$, with $\sup_i \|\pi_i(g)\|$ bounded on compact subsets of $G$, and let $(\sigma_i)_{i}$ be affine actions of $G$ on a Banach space $E_i$, with linear part $\pi_i$ and translation part $b_i \colon G \to E_i$. Assume that the cocycles $b_i$ are pointwise bounded~:
\[\forall g, \sup_{i} \|b_i(g)\|<\infty\]
and equicontinuous at the identity of $G$~:
\begin{equation}\label{eq:equicontinuity_cocycles} \forall \varepsilon>0, \exists U\subset G \textrm{ neighbourhood of the identity}, \sup_{i}\sup_{g \in U} \|b_i(g)\|<\varepsilon.\end{equation}
Let $\cU$ be an ultrafilter on $I$. There is a continuous affine action $\sigma_\cU$ of $G$ on $E_\cU$  with linear part $\pi_\cU$ and translation part $b(g) = (b_i(g))_\cU$.

Moreover for every $ x=(x_i)_\cU \in E_\cU$, we have
\begin{equation}\label{eq:convergence_delta_sigma_ultraproducts}\delta_S^{\sigma_\cU}( x) = \lim_U \delta_S^{\sigma_i}( x_i ).\end{equation}
\end{prop}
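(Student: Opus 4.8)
The plan is to construct $\sigma_\cU$ by hand, using the characterization of $E_\cU$ from Lemma \ref{lem=characterization_XU}, and then to prove the displacement identity \eqref{eq:convergence_delta_sigma_ultraproducts} by reducing a supremum over the compact set $S$ to a finite maximum.

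\emph{Step 1: $b(g)\in E_\cU$.} Pointwise boundedness of the cocycles makes $(b_i(g))_i$ a bounded family, so $b(g):=(b_i(g))_\cU$ is a well-defined element of $\prod_\cU E_i$; the point is that it lies in the strongly continuous part $E_\cU$. I would verify criterion (\ref{item:restriction3}) of Lemma \ref{lem=characterization_XU} for $b(g)$. Applying the cocycle relation twice gives the identity $\pi_i(v)b_i(g)-b_i(g)=\pi_i(g)b_i(g^{-1}vg)-b_i(v)$, so given $\varepsilon>0$ I pick, using the equicontinuity hypothesis \eqref{eq:equicontinuity_cocycles}, a neighbourhood $U_0$ of $e$ with $\sup_i\sup_{u\in U_0}\|b_i(u)\|$ small, and then a neighbourhood $V$ of $e$ with both $V$ and $g^{-1}Vg$ contained in $U_0$; since $\sup_i\|\pi_i(g)\|<\infty$ for this fixed $g$, this bounds $\lim_\cU\sup_{v\in V}\|\pi_i(v)b_i(g)-b_i(g)\|$ by a multiple of $\varepsilon$.

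\emph{Step 2: $\sigma_\cU$ is a continuous affine action.} The cocycle identity $b(gh)=b(g)+\pi_\cU(g)b(h)$ in $E_\cU$ follows from the one for the $b_i$ together with $\pi_\cU(g)b(h)=(\pi_i(g)b_i(h))_\cU$, valid since $b(h)\in E_\cU$. Continuity of $b$ at $e$ follows from $\|b(g)\|=\lim_\cU\|b_i(g)\|\le\sup_i\|b_i(g)\|$ and \eqref{eq:equicontinuity_cocycles}, and continuity at an arbitrary point then follows by the usual cocycle argument $b(g)-b(g_0)=\pi_\cU(g_0)b(g_0^{-1}g)$ together with boundedness of $\pi_\cU$ on compact sets. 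Hence $b\in Z^1(G,\pi_\cU)$, and $\sigma_\cU(g)x:=\pi_\cU(g)x+b(g)$ is a continuous affine action on $E_\cU$ with linear part $\pi_\cU$ and translation part $b$.

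\emph{Step 3: the displacement formula.} For fixed $g\in S$ one has $\sigma_\cU(g)x-x=(\sigma_i(g)x_i-x_i)_\cU$, hence $\|\sigma_\cU(g)x-x\|=\lim_\cU\|\sigma_i(g)x_i-x_i\|\le\lim_\cU\delta_S^{\sigma_i}(x_i)$; taking the supremum over $g\in S$ gives one inequality $\delta_S^{\sigma_\cU}(x)\le\lim_\cU\delta_S^{\sigma_i}(x_i)$. (Boundedness of $\delta_S^{\sigma_i}(x_i)$ in $i$, so that the $\cU$-limit makes sense, is a short consequence of the pointwise boundedness and equicontinuity of the $b_i$ and compactness of $S$, covering $S$ by finitely many translates of a small neighbourhood.) For the reverse inequality I would establish a uniform-in-$i$ equicontinuity of the affine orbit maps near $e$: from the general affine identity $\sigma_i(g_sv)x_i-\sigma_i(g_s)x_i=\pi_i(g_s)\bigl(\sigma_i(v)x_i-x_i\bigr)$ and the bound $\|\sigma_i(v)x_i-x_i\|\le\|\pi_i(v)x_i-x_i\|+\|b_i(v)\|$, one shows that $\lim_\cU\sup_{v\in V}\|\sigma_i(v)x_i-x_i\|$ can be made arbitrarily small as the neighbourhood $V$ of $e$ shrinks, controlling the $b_i$-term by \eqref{eq:equicontinuity_cocycles} and the $\pi_i$-term by criterion (\ref{item:restriction3}) of Lemma \ref{lem=characterization_XU} applied to $x\in E_\cU$. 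Covering $S\subset\bigcup_{s}g_sV$ with $g_s\in S$ and $M:=\sup_i\sup_{g\in S}\|\pi_i(g)\|<\infty$, this yields $\delta_S^{\sigma_i}(x_i)\le M\sup_{v\in V}\|\sigma_i(v)x_i-x_i\|+\max_s\|\sigma_i(g_s)x_i-x_i\|$; applying $\lim_\cU$, the finite maximum passes through to $\max_s\|\sigma_\cU(g_s)x-x\|\le\delta_S^{\sigma_\cU}(x)$, and letting $V$ shrink gives $\lim_\cU\delta_S^{\sigma_i}(x_i)\le\delta_S^{\sigma_\cU}(x)$.

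I expect the main obstacle to be the reverse inequality in Step 3, namely exchanging the supremum over the infinite (but compact) set $S$ with $\lim_\cU$. This is exactly the place where all three ingredients — pointwise boundedness of the $b_i$, their equicontinuity at the identity, and the defining property of $E_\cU$ from Lemma \ref{lem=characterization_XU} — are combined to produce a genuine equicontinuity of the maps $g\mapsto\sigma_i(g)x_i$ near $e$ that is uniform in $i$, which is what reduces the supremum over $S$ to a finite maximum to which $\lim_\cU$ applies routinely.
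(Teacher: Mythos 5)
Your proposal is correct and takes essentially the same route as the paper: Steps 1 and 2 reproduce the paper's argument verbatim (the identity $\pi_i(h)b_i(g)-b_i(g)=\pi_i(g)b_i(g^{-1}hg)-b_i(h)$ combined with criterion (\ref{item:restriction3}) of Lemma \ref{lem=characterization_XU}, then the translation trick $b(g)-b(g_0)=\pi_\cU(g_0)b(g_0^{-1}g)$ for continuity). For the displacement formula the paper simply cites Lemma \ref{lem:ultraproduct_convergence_on_compactly_supported_measures} together with the equicontinuity of the $b_i$, and unpacking that citation gives exactly your covering-of-$S$ argument, so your Step 3 just re-proves the needed special case inline.
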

\begin{proof} From the assumption $\sup_{i} \|b_i(g)\|<\infty$, the element $b(g) = (b_i(g))_\cU$ is well-defined in $\prod_\cU E_i$, and it is clear that $b(g)$ satisfies the cocycle relation $b(gh) = b(g)+ (\pi_i(g))_\cU b(h)$, so the only things that deserve a proof are that $b(g)$ belongs to $E_\cU$ and that $b$ is continuous.

Fix $g \in G$ and $\varepsilon>0$. Let $U$ be neighbourhood of the identity in $G$. From the cocycle relation we deduce that $\pi_i(h)b_i(g) - b_i(g) = \pi_i(g) b_i(g^{-1} h g) - b_i(h)$ and that 
\[ \sup_{h \in U} \| \pi_i(h) b_i(g) - b_i(g)\| \leq  (1+\|\pi_i(g)\|) \sup_{ h \in U \cup g^{-1} U g} \|b_i(h)\|.\]
By \eqref{eq:equicontinuity_cocycles} there is a choice of $U$ such that the preceding is less than $\varepsilon$. Since $\varepsilon$ is arbitrary Lemma \ref{lem=characterization_XU} implies that $b(g) \in E_\cU$.
 
The continuity of $b$ at the identity is immediate from \eqref{eq:equicontinuity_cocycles}. From the relation $b(g) - b(g_0) = \pi_\cU(g_0) b(g_0^{-1} g)$ the continuity of $b$ at the identity implies the continuity of $b$ at every point $g_0 \in G$. This concludes the proof that $b \in Z^1(G;\pi_\cU)$.

By writing 
\[\delta_S^{\sigma_\cU}( x) = \sup_{s \in S} \|\pi_\cU(s) x+b(s)\| \textrm{ and }  \delta_S^{\sigma_i}( x_i ) = \sup_{s \in S} \|\pi_i(s) x+b_i(s)\|,\]
the last assertion follows from Lemma \ref{lem:ultraproduct_convergence_on_compactly_supported_measures} and the equicontinuity of the maps $b_i$ that we just established.
\end{proof}

\subsection{Topology and ultraproducts}
We now characterize (when ultraproducts make sense) the topology defined in \S \ref{subsection:def_topolofy} in terms of representation ultraproducts.  For ultraproducts to make sense, we assume in this part that $\cR$ is a set of equivalence classes of Banach space representations of $G$ such that $\sup_{(\pi,E) \in \cR} \|\pi(g)\|$ is bounded on compact subsets of $G$.

\begin{thm}\label{thm:topology_ultraproducts} The closure of a subset $A \subset \cR$ coincides with all equivalence classes belonging to $\cR$ of subrepresentations of an ultraproduct $(\pi_\cU,E_\cU)$ of representations in $A$.
\end{thm}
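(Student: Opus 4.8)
The plan is to prove the two inclusions separately. For the easy inclusion, suppose $(\pi,E)$ is a subrepresentation of an ultraproduct $(\pi_\cU,E_\cU)$ of a family $(\pi_i,E_i)_{i\in I}$ with each $(\pi_i,E_i)\in A$. I must show that every basic neighbourhood $W_{x_1,\dots,x_n,Q,\varepsilon}(\pi,E)$ meets $A$. Given $x_1,\dots,x_n\in E\subset E_\cU$, choose representatives $(x_{k,i})_i$. By Lemma \ref{lem:ultraproduct_convergence_on_compactly_supported_measures},
\[
\lim_\cU\ \sup_{m_1,\dots,m_n\in M(Q)_1}\Bigl|\,\|\textstyle\sum_k\pi_i(m_k)x_{k,i}\| - \|\sum_k\pi_\cU(m_k)x_k\|\,\Bigr| = 0,
\]
so the set of $i$ for which the supremum is $<\varepsilon$ lies in $\cU$, hence is non-empty; any such $(\pi_i,E_i)$ witnesses that $W_{x_1,\dots,x_n,Q,\varepsilon}(\pi,E)\cap A\neq\emptyset$ (using that $\|f\|_{L_1}\le 1$ measures are among the $M(Q)_1$ measures, via Remark \ref{rem:equivalence_WxQe}). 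Since every basic neighbourhood meets $A$, $(\pi,E)$ is in the closure of $A$.

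For the hard inclusion, suppose $(\pi,E)\in\cR$ is in the closure of $A$; I want to realize it as (an equivalence class of) a subrepresentation of an ultraproduct of members of $A$. The natural directed index set is $I$ = the set of triples $i=(\xi,Q,\varepsilon)$ where $\xi=(x_1,\dots,x_n)$ is a finite tuple in $E$, $Q\subset G$ is compact, and $\varepsilon>0$; order it by refining the tuple, enlarging $Q$, and shrinking $\varepsilon$. Since $(\pi,E)$ is in the closure of $A$, for each $i$ there is $(\pi_i,E_i)\in A$ with a tuple $(x_{k,i})_k$ in $E_i$ satisfying the estimate \eqref{eq:def_WxQe}; fix a cofinal ultrafilter $\cU$ on $I$. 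Form the Banach-space ultraproduct $\prod_\cU E_i$, the (a priori discontinuous) representation $\pi$ on it, and its continuous part $E_\cU$ with $\pi_\cU$ as in \S\ref{subsection:ultraproducts}. The idea is to build an isometric $G$-equivariant embedding $u\colon E\to E_\cU$: send $x\in E$ to the class of the family whose $i$-th coordinate is $x_{k,i}$ when $x$ appears as the $k$-th entry of the tuple $\xi$ in $i$, and $0$ otherwise. Cofinality of $\cU$ forces this to be well-defined, linear and isometric on each finite-dimensional subspace, hence on $E$, exactly as in the proof reproduced in \S\ref{subsection:ultraproducts} that finite representability gives an embedding into an ultraproduct; the extra input here is that the estimate \eqref{eq:def_WxQe} also controls $\|\sum_k\pi(m_k)x_k\|$, which gives equivariance: $u(\pi(g)x)=\pi_\cU(g)u(x)$, first for $g$ ranging over a dense set of point masses and then for all $g$ by strong continuity. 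One must also check that the image $u(E)$ actually lands in $E_\cU$ (not merely in $\prod_\cU E_i$): this follows because $u$ intertwines the actions and $\pi$ is strongly continuous on $E$, so $\pi(f_n)u(x)\to u(x)$ for an approximate unit $(f_n)$, and Lemma \ref{lem=characterization_XU} applies. Then $u$ identifies $(\pi,E)$ with a subrepresentation of $(\pi_\cU,E_\cU)$, and since $(\pi,E)\in\cR$ its equivalence class is of the desired form.

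The main obstacle is the bookkeeping in the hard direction: one has to check that the coordinatewise definition of $u$ is independent of how a vector $x$ is listed inside the tuple $\xi$ (it can appear in several positions, or not at all, for various $i$), that linearity survives the ultralimit — which needs cofinality so that finitely many linear relations among the finitely many chosen vectors are eventually respected — and that equivariance holds simultaneously with isometry, which is why the neighbourhood basis was defined using the joint quantity $\|\sum_k\pi(m_k)x_k\|$ rather than separate seminorms. A secondary point requiring care is the reduction from arbitrary compactly supported measures (Remark \ref{rem:equivalence_WxQe}) to finitely supported ones and to $C_c(G)$-functions, so that the estimate \eqref{eq:def_WxQe} and Lemma \ref{lem:ultraproduct_convergence_on_compactly_supported_measures} speak about the same quantities; this is where equicontinuity-type arguments from the proof of that lemma get reused. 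Once these are in place, combining both inclusions yields the theorem, and Propositions \ref{prop:strong_neigh_discrete} and \ref{prop:strong_neigh_Lp} follow by feeding in the ultraproduct stability of $L_p$ (Proposition \ref{prop:ultraproduct_Lp}) and of classes closed under ultraproducts.
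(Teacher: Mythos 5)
Your two inclusions and the overall architecture (the easy direction via Lemma \ref{lem:ultraproduct_convergence_on_compactly_supported_measures}, and for the hard direction the directed set of triples, a cofinal ultrafilter, the coordinatewise map $u$, with \eqref{eq:def_WxQe} giving approximate isometry, linearity and equivariance) are exactly the paper's. But there is a genuine gap at the step where you place $u(E)$ inside $E_\cU$: you argue that ``$u$ intertwines the actions and $\pi$ is strongly continuous on $E$, so $\pi(f_n)u(x)\to u(x)$''. This inference is false in general. On $\prod_\cU E_i$ the operator $\pi(f_n)$ is by definition the ultraproduct $\prod_\cU \pi_i(f_n)$; it is \emph{not} computable from the limit group action on the vector $u(x)$, and equivariance of $u$ together with strong continuity of $\pi$ on $E$ only gives continuity of the orbit map $g\mapsto \pi(g)u(x)$, which is strictly weaker than membership in $E_\cU$ (the difference is precisely the interchange of $\sup_{g\in U}$ and $\lim_\cU$ in condition (\ref{item:restriction3}) of Lemma \ref{lem=characterization_XU}). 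Concretely, take $G=\R$, $E_j=\C^2$, $\pi_j(t)=\mathrm{diag}(1,e^{i\lambda_j t})$ with $\lambda_j\to\infty$ chosen (by recurrence of linear flows on tori, over a suitable directed index set with a cofinal ultrafilter) so that $\lim_\cU e^{i\lambda_j t}=1$ for every fixed $t$. Then $y=\bigl(\tfrac{1}{\sqrt2}(1,1)\bigr)_\cU$ is an invariant vector of the ultraproduct action, so $\C y$ is a strongly continuous subrepresentation of $\prod_\cU E_j$ and the inclusion intertwines; yet $\lim_\cU\sup_{|t|\le\delta}\|\pi_j(t)y_j-y_j\|=\sqrt2$ for every $\delta>0$, so $y\notin E_\cU$, and indeed $\pi(f_n)y\to\bigl(\tfrac{1}{\sqrt2}(1,0)\bigr)_\cU\neq y$. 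So properties of the limit map $u$ alone cannot yield $u(x)\in E_\cU$.

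The repair has to be made at the level of the approximating maps $u_i$, before passing to the limit, and it is available from your own hypotheses: in \eqref{eq:def_WxQe} take the measures $m_k\in\{0,\tfrac12(\delta_e-\delta_g)\colon g\in \mathrm{Int}(Q)\}$ to get, for every $U\subset\mathrm{Int}(Q)$ and $x$ in the tuple,
\[ \sup_{g\in U}\|\pi_i(g)u_i(x)-u_i(x)\|\;\le\;2\varepsilon+\sup_{g\in U}\|\pi(g)x-x\|, \]
and then cofinality of $\cU$ plus strong continuity of $\pi$ on $E$ gives exactly condition (\ref{item:restriction3}) of Lemma \ref{lem=characterization_XU}, hence $u(x)\in E_\cU$ (this is what the paper does); alternatively, apply \eqref{eq:def_WxQe} to the pair $(x,\pi(f_n)x)$ with the measures $f_n\,dg$ and $-\delta_e$ to get $\pi(f_n)u(x)=u(\pi(f_n)x)$ as elements of $\prod_\cU E_i$, after which your appeal to condition (\ref{item:restriction2}) becomes correct. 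A minor related point: you do not need the ``dense set of point masses plus strong continuity'' step for equivariance (which anyway presupposes continuity of $g\mapsto\pi(g)u(x)$, i.e.\ essentially what is being proved); for fixed $g$ and $x$, cofinally many indices have $x,\pi(g)x$ in the tuple and $g\in\mathrm{Int}(Q)$, so \eqref{eq:def_WxQe} with point masses already gives $u(\pi(g)x)=\pi_\cU(g)u(x)$ for every $g\in G$ directly.
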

\begin{proof} We have two inclusions to prove. First assume that $(\pi,E) \in \cR$ is equivalent to a subrepresentation of an ultraproduct $(\pi_\cU,E_\cU)$ of a family $(\pi_i,E_i)_{i \in I} \in A^I$. Let $u \colon E \to E_\cU$ the corresponding $G$-equivariant isometry. Let $x_1,\dots,x_n \in E$, $Q \subset G$ a compact subset. Pick representatives $(x_{k,i})_{i} \in \prod_I E_i$ of $u(x_k) \in E_\cU$. Define a linear map $u_i\colon  F \to E_i$ by $u_i(x_k)=x_{i,k}$. Lemma \ref{lem:ultraproduct_convergence_on_compactly_supported_measures} implies that for every $\varepsilon>0$,  $(\pi_i,E_i)$ belongs to $W_{x_1,\dots,x_n,Q,\varepsilon}(\pi,E)$ for $\cU$-almost every $i$. In particular, every neighbourhood of $(\pi,E)$ intersects $A$, thereby proving the first inclusion. 

Let us move to the second inclusion. Assume that $(\pi,E)$ belongs to the closure of $A$. Consider $I$, the set of all triples $(F,Q,\varepsilon)$ where $F=(x_1,\dots,x_n)$ is a finite sequence of elements of $E$, $Q \subset G$ is compact containing a neighbourhood of the identity and $\varepsilon>0$. It is a directed set for the order $(F,Q,\varepsilon) \leq (F',Q',\varepsilon')$ when $F \subset F'$,  $Q \subset Q'$ and $\varepsilon\geq \varepsilon'$. For every $i=(F,Q,\varepsilon) \in I$, there is $(\pi_i,E_i) \in A \cap W_{F,Q,\varepsilon}(\pi,E)$. Let $u_i(x_{1}),\dots,u_i(x_n) \in E_{i}$ be a witness of this inclusion. Note in particular that (since $e \in \mathrm{Int}(Q)$) taking $m_k\in \{0,\delta_e\}$ in the definition of $W_{F,Q,\varepsilon}(\pi,E)$ (see Remark \ref{rem:equivalence_WxQe}) we have
\begin{equation}\label{eq:u_i_a_linear} | \| u_i(x)\| - \|x\| \|  \leq \varepsilon \textrm{ for all }x \in F,\end{equation}
and taking $m_k$ a multiple of $\delta_e$
\begin{equation}\label{eq:u_i_a_isometric} \| u_i(x+\lambda y) - u_i(x) - \lambda u_i(y) \|  \leq \varepsilon \max(1,|\lambda|)\end{equation}
for all $x,y \in F$ and $\lambda \in \C$ such that $x+\lambda y \in F$. Similarly, taking $m_k \in \{0,\frac{1}{2}(\delta_e - \delta_g), g \in \mathrm{Int}(Q)\}$ we get that for all $U \subset \mathrm{Int}(Q)$, 
\begin{equation}\label{eq:u_i_equicontinuous} \sup_{g \in U} \| \pi_{i}(g) u_i(x) -u_i(x)\| \leq 2 \varepsilon + \sup_{g \in U} \|\pi(g) x-x\| \textrm{ for all }x \in F.\end{equation}
Finally taking $m_k \in \{0,\delta_g,g \in \mathrm{Int}(Q)\}$ we have
\begin{equation}\label{eq:u_i_a_equivariant} \| \pi_{i}(g) u_i(x) - u_i(\pi(g)x)\| \leq \varepsilon\end{equation}
for all $x \in F$ and $g \in \mathrm{Int}(Q)$ such that $\pi(g) x \in F$. We extend $u_i$ to a map $u_i \colon E \to E_{\alpha(i)}$ by setting $u_i(x) = 0$ is $x \notin F$. Finally let $\cU$ be a cofinal ultrafilter on $I$~: $\{i \in I, i \geq i_0\} \in \cU$ for all $i_0 \in I$. Consider the ultraproduct $(\pi_{\cU},E_{\cU})$. For every $x \in X$, define $u(x) = (u_i(x))_{\cU} \in \prod_\cU E_{i}$. Since $\cU$ is cofinal, \eqref{eq:u_i_a_linear} and \eqref{eq:u_i_a_isometric} imply that $u$ is linear isometric, \eqref{eq:u_i_equicontinuous} and Lemma  \ref{lem=characterization_XU} imply that $u$ takes its values in $E_\cU$, and \eqref{eq:u_i_a_equivariant} implies that it satisfies $u(\pi(g) x) = \pi_\cU(g) u(x)$ for all $x \in E$ and $g \in G$. Therefore, $u$ realizes $(\pi,E)$ as a subrepresentation of $(\pi_\cU,E_\cU)$. This shows the second inclusion.
\end{proof}

We can now give the
\begin{proof}[Proof of Proposition \ref{prop:strong_neigh_discrete}] As in the proof Proposition \ref{prop:strong_neigh_Em}, we have to show that every accumulation point $(\pi,E)$ of a sequence $(\pi_n,E_n) \in \cG(\cE,m+o(1) \ell)$ belongs to $\cG(\cE,m)$. For every $n$, realize $(\pi_n,E_n)$ as a subrepresentation of $(\pi'_n,E'_n)$ with $E'_n \in \cE$ and $\|\pi'_n(g)\| \leq e^{m(g)}$. By Theorem \ref{thm:topology_ultraproducts}, $(\pi,E)$ is a subrepresentation of an ultraproduct $(\pi_\cU,E_\cU)$ of a family $(\pi_{n_i},E_{n_i})_{i \in I}$ with $\lim_\cU n_i=\infty$, which is itself a subrepresentation of the ultraproduct $(\pi'_\cU, E'_\cU)$ of $(\pi'_{n_i},E'_{n_i})_{i \in I}$. Then $\|\pi'_\cU(g)\| \leq \lim_\cU \|\pi'_{n_i}(g)\| \leq e^{m(g)}$ for every $g$. Moreover, since $G$ is discrete, $E'_\cU = \prod_\cU E_{n_i}$, which belongs to $\cE$ because $\cE$ is stable by ultraproducts. So $(\pi,E) \in \cG(\cE,m)$.
\end{proof}

\begin{proof}[Proof of Proposition \ref{prop:strong_neigh_Lp}] The same proof applies, except that instead of using that $G$ is discrete, use Proposition \ref{prop:ultraproduct_Lp}.
\end{proof}

\section{Applications}\label{sec:applications}

\subsection{Choice of $\cR$}\label{subsection:R} 
In the previous section $\cR$ could be an arbitrary set of equivalence classes of Banach space representations, but in the applications we need $\cR$ to be large enough. For this we fix $M>1$ and take for $\cR$ the set of all equivalence classes of Banach space representations $(\pi,E)$ on a separable Banach space and such that $\sup_{g \in S} \|\pi(g)\| \leq M$ for all $g \in G$. 

The reason why we impose a bound on $\|\pi(g)\|$ is that this ensures that ultraproducts of representations in $\cR$ make sense. The reason why we have to bound the dimensions of the spaces in $\cR$ is because otherwise $\cR$ would not be a set. The separability is enough for our purposes because $\cR$ is stable by ultraproducts in the following sense~: for every ultraproduct $(\pi_\cU,E_\cU)$ of a family $(\pi_i,E_i)_i\in \cR^I$ and every continuous function $f \colon G \to E_\cU$, there is a $\pi_\cU(G)$-invariant closed subspace $E \subset E_\cU$ containing $f(G)$ such that the equivalence class of $(\pi,E)$ belongs to $\cR$.

\subsection{Proof of Theorem \ref{thm:Kazproj+H1_open}}
We now prove the following precise form of Theorem \ref{thm:Kazproj+H1_open}, in the general setting of locally compact compactly generated groups.
\begin{thm}\label{thm:Kazproj} Let $\cF \subset \cR$. Assume that $\cC_{\cF}(G)$ has a Kazhdan projection and that $H^1(G,\pi)=0$ for all $(\pi,E) \in \cF$. Then there is a strong neighbourhood $\cF'$ of $\cF$ such that $\cC_{\cF'}(G)$ has a Kazhdan projection and that $H^1(G,\pi)=0$ for all $(\pi,E) \in \cF'$.
\end{thm}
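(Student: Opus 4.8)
The plan is to combine the local characterization of Kazhdan projections (Theorem \ref{thm=local_characterization_of_Kazhdan_constant_nondiscrete}) with the ultraproduct description of closures (Theorem \ref{thm:topology_ultraproducts}) and the characterization of strong neighbourhoods via accumulation points (Lemma \ref{lem:characterization_strong_neigh}(\ref{item:strongneig1})). First I would apply Theorem \ref{thm=local_characterization_of_Kazhdan_constant_nondiscrete} to $\cF$: the existence of a Kazhdan projection in $\cC_{\cF}(G)$ produces a compactly supported measure $m$ with $\int 1\,dm = 1$ such that $\delta_S^\pi(\pi(m)x) \leq \tfrac12\,\delta_S^\pi(x)$ for all $(\pi,E)\in\cF$ and all $x\in E$. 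The idea is that this inequality, being ``local'' (it only involves the $S^{N+1}$-orbit of $x$, where $S^N$ contains the support of $m$), should persist with constant $\tfrac12$ replaced by, say, $\tfrac12 + \tfrac14 = \tfrac34$ on a whole strong neighbourhood; and then $\tfrac34 < 1$ still forces a Kazhdan projection by the ``if'' direction of Theorem \ref{thm=local_characterization_of_Kazhdan_constant_nondiscrete} (valid for any constant in $(0,1)$). The $H^1$-vanishing is the delicate part and requires zooming in around points, which is where the affine-action statement of Theorem \ref{thm=local_characterization_of_Kazhdan_constant_nondiscrete} and the ultraproducts-of-affine-actions machinery (Proposition \ref{prop:ultraproduct_affine}) enter.

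Concretely, I would define the candidate strong neighbourhood as
\[
\cF' = \{(\pi,E)\in\cR : \delta_S^\pi(\pi(m)x) \leq \tfrac34\,\delta_S^\pi(x)\ \forall x\in E,\ \text{and}\ \delta_S^\sigma(\sigma(m)x)\leq \tfrac34\,\delta_S^\sigma(x)\ \forall x,\ \forall \sigma\in Z^1(G;\pi)\},
\]
that is, $\cF'$ consists of representations for which $m$ still $\tfrac34$-shrinks displacement both linearly and for every associated affine action. By Theorem \ref{thm=local_characterization_of_Kazhdan_constant_nondiscrete} applied to $\cF'$ itself, the first condition (for all of $\cF'$ at once, with a fixed constant $<1$) gives that $\cC_{\cF'}(G)$ has a Kazhdan projection; and the second condition, again by that theorem, gives $H^1(G;\pi)=0$ for every $(\pi,E)\in\cF'$. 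It remains to show $\cF'$ is a strong neighbourhood of $\cF$, for which I would use Lemma \ref{lem:characterization_strong_neigh}(\ref{item:strongneig1}): take any net $(\pi_i,E_i)$ lying in the complement of $\cF'$ and show it has an accumulation point outside $\cF$. Each $(\pi_i,E_i)\notin\cF'$ either admits a unit-ish vector $x_i$ with $\delta_S^{\pi_i}(\pi_i(m)x_i) > \tfrac34\,\delta_S^{\pi_i}(x_i)$, or admits an affine action $\sigma_i$ with linear part $\pi_i$ and a point $x_i$ with $\delta_S^{\sigma_i}(\sigma_i(m)x_i) > \tfrac34\,\delta_S^{\sigma_i}(x_i)$. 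Passing to a subnet landing in one of the two cases, I would rescale $x_i$ (and, in the affine case, also recenter—replace $\sigma_i$ by the conjugate action moving $x_i$ to a controlled position, and rescale the cocycle) so that the relevant displacement is normalized to $1$; the bound \eqref{ineq_delta} together with the uniform operator bound on $\cR$ keeps the relevant orbit vectors bounded. Then by Theorem \ref{thm:topology_ultraproducts} an accumulation point of $(\pi_i,E_i)$ in $\cR$ is (a subrepresentation of) an ultraproduct $(\pi_\cU,E_\cU)$; in the affine case, Proposition \ref{prop:ultraproduct_affine} (after checking the cocycles $b_i$ are pointwise bounded and equicontinuous at $e$, which the normalization plus \eqref{ineq_delta} plus strong continuity arrange) produces an affine action $\sigma_\cU$ on $E_\cU$ with linear part $\pi_\cU$, and the limit point $x = (x_i)_\cU$ satisfies, by \eqref{eq:convergence_delta_sigma_ultraproducts} and Lemma \ref{lem:ultraproduct_convergence_on_compactly_supported_measures}, $\delta_S^{\sigma_\cU}(\sigma_\cU(m)x) \geq \tfrac34 > \tfrac12$ while $\delta_S^{\sigma_\cU}(x) = 1$. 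If this accumulation point lay in $\cF$, then by Theorem \ref{thm=local_characterization_of_Kazhdan_constant_nondiscrete} (the affine-action ``only if'' direction, since $H^1(G;\pi_\cU)=0$ would force $m$ to $\tfrac12$-shrink every such affine displacement) we'd reach a contradiction; the linear case is similar and simpler, contradicting $\delta_S^{\pi_\cU}(\pi_\cU(m)x)\leq \tfrac12\delta_S^{\pi_\cU}(x)$. Hence the accumulation point is outside $\cF$, as needed.

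The main obstacle I expect is the bookkeeping in the affine case: one must argue that after the rescale-and-recenter normalization the resulting cocycles really are equicontinuous at the identity uniformly in $i$ (so that Proposition \ref{prop:ultraproduct_affine} applies and the limit is a genuine \emph{continuous} affine action on $E_\cU$ rather than something degenerate), and that the normalization does not trivialize the witnessing data—i.e. that $\delta_S^{\sigma_i}(x_i)$ stays bounded below so the limit vector $x$ is genuinely nonzero in $E_\cU$. Equicontinuity at $e$ should follow from the cocycle relation $\pi_i(h)b_i(g)-b_i(g) = \pi_i(g)b_i(g^{-1}hg)-b_i(h)$ combined with the continuity statement \eqref{eq:u_i_equicontinuous}-type control inherited from strong continuity of $\pi_i$ and the uniform bound on $\cR$; but making this precise, and checking it interacts correctly with the choice of basepoint renormalization, is the technical heart of the argument. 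Once the ultraproduct affine action is in hand, the contradiction is immediate from Theorem \ref{thm=local_characterization_of_Kazhdan_constant_nondiscrete}, and the fact that $\cF'$ has all the desired properties is just a re-application of that same theorem with constant $\tfrac34$ in place of $\tfrac12$.
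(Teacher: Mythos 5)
Your overall strategy --- extract $m$ from Theorem \ref{thm=local_characterization_of_Kazhdan_constant_nondiscrete}, define $\cF'$ by an affine shrinking inequality with a slightly degraded constant, and verify the strong-neighbourhood property via Lemma \ref{lem:characterization_strong_neigh} and an ultraproduct of affine actions built from witnesses of failure --- is the paper's strategy, and for \emph{discrete} $G$ your argument is essentially complete (the paper notes that in that case one can indeed keep $m'=m$ and just worsen the constant). But for general locally compact $G$ there is a genuine gap exactly at the point you flag as the technical heart: the equicontinuity hypothesis \eqref{eq:equicontinuity_cocycles} of Proposition \ref{prop:ultraproduct_affine} does \emph{not} follow from strong continuity of the individual $\pi_i$ (or $\sigma_i$) together with the cocycle relation. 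Strong continuity gives a modulus of continuity for each $b_i$ separately, with no uniformity in $i$, and the identity $\pi_i(h)b_i(g)-b_i(g)=\pi_i(g)b_i(g^{-1}hg)-b_i(h)$ only transfers smallness of $b_i$ near $e$ to other points; it cannot create smallness near $e$. The same problem reappears in your linear branch: for the witness vectors $x_i$ to define an element of $E_\cU$ you need condition (\ref{item:restriction3}) of Lemma \ref{lem=characterization_XU}, again a uniform-in-$i$ continuity statement which can fail (think of $G=\R$ acting on $\C$ by characters $e^{itn_i}$ with $n_i\to\infty$ and $x_i=1$: no neighbourhood of $e$ moves the $x_i$ uniformly little). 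So without an extra ingredient the ultraproduct affine action, and hence your contradiction, need not exist.

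The missing ingredient is the regularization the paper builds in from the start: fix a nonnegative $f_0\in C_c(G)$ with compact support $Q_0$ and $\int f_0=1$, recenter each witnessing action at $\sigma_i(f_0)x_i$, i.e.\ set $b_i(g)=\sigma_i(g)\sigma_i(f_0)x_i-\sigma_i(f_0)x_i$, so that $\|b_i(g)\|\leq \|\lambda_g f_0-f_0\|_{L_1(G)}\sup_{h,h'\in Q_0}\|\sigma_i(gh)x_i-\sigma_i(h')x_i\|$ and equicontinuity follows from continuity of translation in $L_1(G)$, uniformly in $i$ (the second factor being uniformly bounded by the normalization $\delta_S^{\sigma_i}(x_i)=1$ and \eqref{ineq_delta}). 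The price is a multiplicative loss $C$ as in \eqref{eq:f_0_does_not_increase_delta_too_much}, which is absorbed by testing $\cF'$ not with $m$ but with $m'=m^{\ast k}\ast f_0$ for $k$ with $2^{-k}C<\frac 1 2$, and by running the contradiction at the recentered origin: one gets $\delta_S^\sigma(\sigma(m^{\ast k})0)\geq \frac 1 2$ while $\delta_S^\sigma(0)\leq C$, contradicting the $k$-fold iterate of the $\frac 1 2$-shrinking that membership of the accumulation point in $\cF$ (via $H^1=0$ and Theorem \ref{thm=local_characterization_of_Kazhdan_constant_nondiscrete}) would force. With this modification --- which also changes your bookkeeping, since the witness becomes the recentered origin rather than the vector $x$ itself --- the rest of your outline goes through as you describe.
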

\begin{rem} As we shall see in \S \ref{subsec:Lp}, it is not true that if one moreover assumes that $\cC_{cF}(G)$ has a central Kazhdan projection, then $\cC_{\cF'}(G)$ has a Kazhdan projection.
\end{rem}
\begin{proof}
By Theorem \ref{thm=local_characterization_of_Kazhdan_constant_nondiscrete}, there is a a compactly supported measure $m$ with $\int 1 dm=1$ such that $\delta_S^\sigma( \sigma(m) x) \leq \frac 1 2 \delta_S^\sigma(x)$ for every affine action $\sigma$ on $E$ with linear part in $\cF$ and every $x \in E$. We shall find another measure $m'$ and a strong neighbourhood $\cF'$ of $\cF$ such that $\delta_S^\sigma( \sigma(m') x) \leq \frac 1 2 \delta_S^\sigma(x)$ for every affine action $\sigma$ with linear part $(\pi,E)$ in $\cF'$ and every $x \in E$. By the converse direction in Theorem \ref{thm=local_characterization_of_Kazhdan_constant_nondiscrete} this will imply that $\cC_{\cF'}(G)$ has a Kazhdan projection and that $H^1(G,\pi)=0$ for all $(\pi,E) \in \cF'$, and prove the first part of the theorem.

The proof is particularily simple in the case when the group is discrete, and in that case we can take $m'=m$ (and replace $\frac 1 2$ by $\frac 1 {\sqrt 2}$, say). In the general case we fix a nonnegative function $f_0 \in L_1(G)$ with compact support $Q_0$ and $\int f_0=1$ which will be used for regularization (if $G$ is discrete just take $f_0 = \delta_e$). By Lemma \ref{lem=trivialite} there is $C>0$ such that 
\begin{equation}\label{eq:f_0_does_not_increase_delta_too_much}\delta^\sigma_S(\sigma(f_0)x) \leq C \delta^\sigma_S(x)\end{equation} for every affine action $\sigma$ on a Banach space $E$ with linear part in $\cR$ and every $x \in E$. Take $k\in \N$ such that $\frac C {2^k} < \frac 1 2$, and denote $m' = m^{\ast k} \ast f_0$. Define $\cF'$ as the set of representation classes $(\pi,E) \in \cR$ such that $\delta_S^\sigma( \sigma(m') x) \leq \frac 1 2 \delta_S^\sigma(x)$ for every affine action $\sigma$ with linear part $(\pi,E)$ and every $x \in E$.

To prove that $\cF'$ is a strong neighbourhood of $\cF$ we use the characterization in Lemma \ref{lem:characterization_strong_neigh}: we consider a net $(\pi_i,E_i)_{i \in I}$ contained in $\cR \setminus \cF'$, and we have to construct an accumulation point of this net which does not belong to $\cF$. By definition of $\cF'$, for every $i \in I$, there is an affine action $\sigma_i$ of $G$ on $E_i$ with linear part $\pi_i$ and $x_i \in E_i$ such that 
\[ \delta_S^{\sigma_i}(\sigma(m') x_i) > \frac 1 2 \delta_S^{\sigma_i}( x_i).\]
By normalizing we can assume that $\delta_S^{\sigma_i}( x_i)=1$. The formula \[b_i(g) = \sigma_i(g) \sigma_i(f_0) x_i -\sigma_i(f_0)x_i\] defines a cocycle with values in $\pi_i$, cohomologous to the cocycle $g \mapsto \sigma_i(g) 0$. By \eqref{eq:f_0_does_not_increase_delta_too_much}, it satisfies $\sup_{g \in S} \|b_i(g)\| \leq C$, and therefore by the cocycle relation we have $\sup_{i} \|b_i(g)\|<\infty$ for every $g \in G$. Moreover, we have
\[ \|b_i(g)\| \leq \| \lambda_g f_0 - f_0\|_{L_1(G)} \sup_{h,h' \in Q_0} \| \sigma_i(gh) x_i-\sigma_i(h') x_i\|.\]
Since $\lim_{g \to e} \| \lambda_g f_0 - f_0\|_{L_1(G)}=0$, this implies that the $b_i$ are equicontinuous on the neighbourhood of $e$. Take $\cU$ a cofinal ultrafilter on $I$. By Propostion \ref{prop:ultraproduct_affine}, $b(g) = (b_i(g))_\cU$ is a cocycle with values in $(\pi_\cU,E_\cU)$, and defines an affine action $\sigma$. Since $G$ is separable, there is a separable closed subspace $E \subset E_\cU$ that is invariant under $\pi$ and $\sigma$. Then $(\pi_\cU,E) \in \cR$ and by Theorem \ref{thm:topology_ultraproducts} and the cofinality of $\cU$, $(\pi_\cU,E)$ is an accumulation point of the net $(\pi_i,E_i)_i$. On the one hand we have $\delta_S^\sigma(0) = \sup_{g \in S} \|b(g)\| \leq C$, and on the other hand by \eqref{eq:convergence_delta_sigma_ultraproducts} in Proposition \ref{prop:ultraproduct_affine},
\[ \delta_S^\sigma(\sigma(m^k) 0) = \lim_\cU  \delta_S^{\sigma_i}(\sigma_i(m^k)\sigma_i(f_0) x_i) = \lim_\cU  \delta_S^{\sigma_i}(\sigma_i(m') x_i)  \geq \frac 1 2.\]
By the definition of $k$ we therefore have  $\delta_S^\sigma( \sigma(m^k) 0) > 2^{-k} \delta_S^\sigma(0)$. This implies that $(\pi_\cU,E) \notin \cF$ and concludes the proof of the first half of the theorem.
\end{proof}

\subsection{Fixed point and Kazhdan projections}\label{subsection:fixed_point_KP}
The next lemma, a form of which has been proved by Masato Mimura, gives an improvement on Guichardet's argument that $H^1(G;\pi)=0$ implies that $E/E^\pi$ does not have almost invariant vectors. We provide a proof for completeness.
\begin{lem}\label{lem:FE_implies_uniform_spectral_gap} Let $\cF \subset \cR$ be closed with the property that for every $(\pi,E) \in \cF$ and every continuous affine action of $G$ on $E$ with linear part $\pi$ has a fixed point. There exists $\varepsilon>0$ such that $\delta_S^\pi(x) \geq \varepsilon \|x\|_{E/E^\pi}$ for every $(\pi,E) \in \cF$ and $x \in E$.
\end{lem}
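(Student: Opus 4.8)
The plan is to argue by contradiction, producing from a failure of the inequality a representation in $\cF$ violating the hypothesis $H^1(G,\pi)=0$. First I would recall the argument sketched just before the lemma: for every $(\pi,E)\in\cF$, $H^1(G,\pi)=0$ already forces, via the open mapping theorem, a \emph{finite} constant $C_\pi:=\sup\{\|x\|_{E/E^\pi}/\delta_S^\pi(x):x\in E\setminus E^\pi\}$; the whole point is to bound $\sup_{\cF}C_\pi$. So I assume $\sup_{\cF}C_\pi=\infty$ and pick $(\pi_n,E_n)\in\cF$ together with vectors $x_n\in E_n$ that \emph{nearly saturate} $C_{\pi_n}$: normalising $\|x_n\|_{E_n/E_n^{\pi_n}}=1$ with $\|x_n\|_{E_n}\le 2$, I arrange $C_{\pi_n}\to\infty$ and $\varepsilon_n:=\delta_S^{\pi_n}(x_n)$ with $\varepsilon_n^{-1}\ge C_{\pi_n}-1$ (so in particular $\varepsilon_n\to 0$).

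Next I would manufacture affine actions whose fixed points run to infinity and pass to an ultraproduct. Fix a nonnegative mollifier $f_0\in C_c(G)$ with $\int f_0=1$ supported in some $S^{N_0}$ (take $f_0=\delta_e$ if $G$ is discrete) and set $z_n:=\pi_n(f_0)x_n$; using \eqref{ineq_delta} and $\sup_{g\in S}\|\pi_n(g)\|\le M$ one gets, with constants $c',c''$ depending only on $f_0$ and $M$, the estimates $\|z_n-x_n\|\le c'\varepsilon_n$, $\ \|z_n\|_{E_n/E_n^{\pi_n}}\ge 1-c'\varepsilon_n$ and $\delta_S^{\pi_n}(z_n)\le c''\varepsilon_n$. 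Let $\sigma_n$ be the affine action $\sigma_n(g)v=\pi_n(g)v+b_n(g)$ with $b_n(g):=\varepsilon_n^{-1}(\pi_n(g)-1)z_n$. Writing $b_n(g)=\varepsilon_n^{-1}\int(\lambda_g f_0-f_0)(h)\,(\pi_n(h)x_n-x_n)\,dh$ (legitimate since $\int(\lambda_g f_0-f_0)=0$), continuity of translation on $L^1(G)$ together with \eqref{ineq_delta} shows the $b_n$ are pointwise bounded on $G$ and equicontinuous at $e$; moreover the fixed-point set of $\sigma_n$ is $-\varepsilon_n^{-1}z_n+E_n^{\pi_n}$, so every fixed point of $\sigma_n$ has norm $\ge\varepsilon_n^{-1}(1-c'\varepsilon_n)\to\infty$. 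Fixing a nonprincipal ultrafilter $\cU$ on $\N$, Proposition \ref{prop:ultraproduct_affine} produces an ultraproduct affine action $\sigma_\cU$ on $E_\cU$ with linear part $\pi_\cU$ and cocycle $b(g)=(b_n(g))_\cU$; choosing as in \S\ref{subsection:R} a separable $\pi_\cU(G)$-invariant closed subspace $E\subseteq E_\cU$ with $b(G)\subseteq E$, the representation $(\pi_\cU|_E,E)$ lies in $\cR$ and, being a subrepresentation of an ultraproduct of the $\pi_n\in\cF$, belongs to $\overline{\cF}=\cF$ by Theorem \ref{thm:topology_ultraproducts}.

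Finally I would extract the contradiction. By the hypothesis on $\cF$ the restricted action $\sigma_\cU|_E$ has a fixed point $w\in E$; write $w=(w_n)_\cU$ with $R:=\sup_n\|w_n\|<\infty$. Since $\delta_S^{\sigma_\cU}(w)=0$, formula \eqref{eq:convergence_delta_sigma_ultraproducts} gives $\lim_\cU\delta_S^{\sigma_n}(w_n)=0$; but $\sigma_n(s)w_n-w_n=(\pi_n(s)-1)(w_n+\varepsilon_n^{-1}z_n)$, so with $v_n:=w_n+\varepsilon_n^{-1}z_n$ we get $\delta_S^{\pi_n}(v_n)\to 0$ along $\cU$. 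On the other hand $\|v_n\|_{E_n/E_n^{\pi_n}}\ge\varepsilon_n^{-1}\|z_n\|_{E_n/E_n^{\pi_n}}-\|w_n\|\ge\varepsilon_n^{-1}-c'-R\ge C_{\pi_n}-c_2$ with $c_2:=1+c'+R$, so by the very definition of $C_{\pi_n}$ (and $v_n\neq 0$ for $n$ large), $C_{\pi_n}\ge\|v_n\|_{E_n/E_n^{\pi_n}}/\delta_S^{\pi_n}(v_n)\ge(C_{\pi_n}-c_2)/\delta_S^{\pi_n}(v_n)$, i.e.\ $\delta_S^{\pi_n}(v_n)\ge 1-c_2/C_{\pi_n}$. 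As $C_{\pi_n}\to\infty$ this forces $\lim_\cU\delta_S^{\pi_n}(v_n)\ge 1$, contradicting the previous line, and the lemma follows.

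The step I expect to be the crux is the passage from the $\pi_n$'s to a genuine representation in $\cF$ on which to derive the contradiction: the naive attempt to run Guichardet's argument ``in the limit'' fails because $E^\pi$ may grow under ultraproducts, so the quotient norm $\|\cdot\|_{E/E^\pi}$ is not continuous there. Working with affine actions repairs this, since the displacement at a \emph{fixed} point does pass to the ultraproduct (Proposition \ref{prop:ultraproduct_affine}); the decisive idea is then to choose $x_n$ so as to almost saturate $C_{\pi_n}$, which makes the almost-invariant vector $v_n$ manufactured in the limit have quotient norm essentially $C_{\pi_n}$ — larger than the definition of $C_{\pi_n}$ permits. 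The mollifier $f_0$ is only a technical device, needed when $G$ is not discrete to make the cocycles $b_n$ equicontinuous at the identity so that Proposition \ref{prop:ultraproduct_affine} applies.
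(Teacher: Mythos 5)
Your proof is correct and follows essentially the same route as the paper: argue by contradiction with near-extremal vectors for the optimal constant, mollify by $f_0$, form the rescaled coboundary cocycles, pass to an ultraproduct affine action via Proposition \ref{prop:ultraproduct_affine} and closedness of $\cF$ (Theorem \ref{thm:topology_ultraproducts}), and use the fixed point to produce almost-invariant vectors of large quotient norm contradicting the definition of the Kazhdan constant. The only differences from the paper's proof are the normalization (quotient norm $1$ and cocycle scaled by $\varepsilon_n^{-1}$, versus $\delta_S^{\pi_n}(x_n)=1$ and the unscaled coboundary) and phrasing the contradiction through $C_\pi=1/\varepsilon(\pi,S)$, which are cosmetic.
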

\begin{proof}
If $(\pi,E)$ is a Banach space representation of $G$, we denote by $\varepsilon(\pi,S)$ the associated Kazhdan constant, \emph{i.e.} the best (=largest) $\varepsilon$ such that $\delta_S^\pi(x) \geq \varepsilon \|x\|_{E/E^\pi}$ for every $x \in E$. If the lemma was not true, there would exist a sequence $(\pi_n,E_n) \in \cE$ such that $\lim_n \varepsilon(\pi_n,S)=0$. Denote $\varepsilon_n = \varepsilon(\pi_n,S)$. As recalled above, $\varepsilon_n$ is strictly positive. Let $\cU$ be a free ultrafilter on $\N$ and $(\pi_\cU,E_\cU)$ the ultraproduct of $(\pi_n,E_n)$. Let $x_n \in E_n$ such that $\delta_S^\pi(x_n)=1$ and $\|x_n\|_{E_n/E_n^{\pi_n}} > \frac{1}{2\varepsilon_n}$. Let $f_0 \in C_c(G)$ nonnegative with $\int f_0=1$ and $y_n = \pi(f_0) x_n $. By Lemma \ref{lem=trivialite} there is $C>0$ such that $\|x_n - y_n\| \leq C$ for all $n$, so that 
\[ \|y_n\|_{E_n/E_n^{\pi_n}}  \geq \|x_n\|_{E_n/E_n^{\pi_n}} - \|y_n-x_n\|_{E_n} \geq \frac{1}{3\varepsilon_n}\]
for all $n$ large enough. Moreover, if we define $b_n(g) = \pi(g) y_n - y_n$,  then as in the proof of Theorem \ref{thm:Kazproj} $b(g) := (b_n(g))_\cU$ defines an affine action $\sigma$ with linear part $(\pi_\cU,E_\cU)$ and there is a closed subspace $E \subset E_\cU$ that is invariant under $\pi_\cU$ and $\sigma$ such that $(\pi_\cU,E) \in \cR$. By Theorem \ref{thm:topology_ultraproducts} $(\pi_\cU,E)$ belongs to $\cF$ because $\cF$ is closed. This affine action therefore has a fixed point $z = (z_n)_\cU \in E$, so that the class of $(b_n(g))$ in the ultraproduct coincides with the class of $(\pi(g) z_n - z_n)_n$ for some $z=(z_n)_\cU \in E$. By proposition \ref{prop:ultraproduct_affine}, we get $\lim_{\cU} \delta_S^{\pi_n}(y_n - z_n)=0$, whereas $\| y_n - z_n \|_{E_n/E_n^\pi} \geq \|y_n\|_{E_n/E_n^\pi}- \|z_n\| \geq \frac{1}{4\varepsilon_n}$ for all $n$ large enough. This contradicts the definition of $\varepsilon_n$. 
\end{proof}
We can now prove Proposition \ref{prop:FE_implies_KazhdanProj}
\begin{proof}[Proof of Proposition \ref{prop:FE_implies_KazhdanProj}] If $\cE$ is stable by finite representability, then $\cF$, the set of all isometric representations $(\pi,E) \in \cR$ on a space in $\cE$, is closed (Remark \ref{rem:closure_implies_fin_rep+bound}). By Lemma \ref{lem:FE_implies_uniform_spectral_gap} and \cite{drutunowak} $\cC_{\cF}(G)$ has a Kazhdan projection.

If $\cE$ is only closed by ultraproducts and $G$ is discrete, then the same proof applies to $\cF =\cG(\cE,0)$ (see Proposition \ref{prop:strong_neigh_discrete}). If $\cE=L_p$ the same proof applies with $\cF=\cG(L_p,0)$ (Proposition \ref{prop:strong_neigh_Lp}).
\end{proof}

\subsection{Application to fixed point properties}\label{subsection:app_fixed_point}
Recall that if $\cE$ is a class of Banach spaces, we denote by $\cF(\cE,m)$ the set of all equivalence classes of representations $(\pi,E) \in \cR$ such that $E$ is isomorphic to a space in $\cE$ and such that $\|\pi(g)\|_{B(E)}\leq e^{m(g)}$ for all $g \in G$. 

\begin{cor}\label{cor:direct_cor}
Let $\cE$ be a class of Banach spaces closed under finite representability and $m \colon G\to (0,\infty)$. If $\cC_{\cF(\cE,m)}(G)$ has a Kazhdan projection and every affine action with linear part in $\cF(\cE^{N,\varepsilon},m)$ has a fixed point, then there exists $N\in \N,\varepsilon>0$ such that the same is true for $\cF(\cE^{N,\varepsilon},m+\varepsilon \ell)$. 

If moreover $m$ is symmetric and $\cE$ is stable by duality, then there exists $N,\varepsilon$ such that the Kazhdan projection in $\cC_{\cF(\cE^{N,\varepsilon},m+\varepsilon \ell)}(G)$ is central.
\end{cor}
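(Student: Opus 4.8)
The plan is to obtain the first assertion by feeding $\cF=\cF(\overline{\cE},m)$ into Theorem \ref{thm:Kazproj} and then reading off Proposition \ref{prop:strong_neigh_Em}, and to obtain the centrality statement from Proposition \ref{prop:central} by producing a Kazhdan projection in the dual algebra, using the duality estimate of Lemma \ref{lem:duality_finite_representability}. The one structural fact I will use repeatedly is that if $\|f\|_{\cG}\leq\|f\|_{\cH}$ on $C_c(G)$ --- in particular if $\cG\subseteq\cH$ --- then the identity of $C_c(G)$ extends to a contractive algebra homomorphism $\cC_{\cH}(G)\to\cC_{\cG}(G)$, and by Lemma \ref{lem:intrinsic_char_K_proj} this homomorphism carries a Kazhdan projection to a Kazhdan projection, since the defining relation $f\ast p=(\int f)p$ and membership in the closure of $\{f\in C_c(G):\int f=1\}$ both survive.

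For the first assertion, set $\cF=\cF(\overline{\cE},m)$. Because $\cE$ is closed under finite representability, Remark \ref{rem:closure_implies_fin_rep+bound} shows that $\cF$ is closed in $\cR$ and that $\cC_{\cF(\cE,m)}(G)=\cC_{\cF}(G)$, so $\cC_{\cF}(G)$ has a Kazhdan projection; moreover $H^1(G,\pi)=0$ for every $(\pi,E)\in\cF$, this being part of the hypothesis since $\cF\subseteq\cF(\cE^{N,\varepsilon},m)$. I would then invoke Theorem \ref{thm:Kazproj} to get a strong neighbourhood $\cF'$ of $\cF$ that again carries a Kazhdan projection and on which $H^1$ vanishes, and Proposition \ref{prop:strong_neigh_Em} to find $N\in\N$, $\varepsilon>0$ with $\cF(\cE^{N,\varepsilon},m+\varepsilon\ell)\subseteq\cF'$. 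The fact above then transports the Kazhdan projection to $\cC_{\cF(\cE^{N,\varepsilon},m+\varepsilon\ell)}(G)$, and the vanishing of $H^1$ on the subfamily is automatic.

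For the centrality statement, let $N_0,\varepsilon_0$ be the parameters just found and write $\cF_0=\cF(\cE^{N_0,\varepsilon_0},m+\varepsilon_0\ell)$. Closedness of $\cE$ under finite representability makes it closed under subspaces, and it is closed under duality by assumption, so Lemma \ref{lem:duality_finite_representability} gives $N_1,\varepsilon_1$, which I may choose with $N_1\geq N_0$ and $0<\varepsilon_1\leq\varepsilon_0$ (hence $\cE^{N_1,\varepsilon_1}\subseteq\cE^{N_0,\varepsilon_0}$), so that $X\in\cE^{N_1,\varepsilon_1}$ forces $X^*\in\cE^{N_0,\varepsilon_0}$. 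With $\cF_1=\cF(\cE^{N_1,\varepsilon_1},m+\varepsilon_1\ell)$ one has $\cF_1\subseteq\cF_0$, so $\cC_{\cF_1}(G)$ has a Kazhdan projection $p$. Next I would bound $\cF_1^*=\{(\tr\pi,\tr E):(\pi,E)\in\cF_1\}$: since $m$ and $\ell$ are symmetric, $\|\tr\pi(g)\|\leq\|\pi(g^{-1})\|\leq e^{(m+\varepsilon_1\ell)(g)}\leq e^{(m+\varepsilon_0\ell)(g)}$, and $\tr E$ is a subspace of $E^*\in\cE^{N_0,\varepsilon_0}$; consequently every separable $\tr\pi(G)$-invariant closed subspace $F\subseteq\tr E$ yields a representation in $\cF_0$, and since the norm on $\cC_{\cF_1^*}(G)$ is determined by such separable subrepresentations, $\|f\|_{\cF_1^*}\leq\|f\|_{\cF_0}$ on $C_c(G)$. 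Hence $\cC_{\cF_1^*}(G)$ inherits a Kazhdan projection from $\cC_{\cF_0}(G)$, and the equivalence (\ref{item:central})$\Leftrightarrow$(\ref{item:exists_dualKP}) of Proposition \ref{prop:central} gives that $p$ is central.

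The hard part is the last step --- showing $\cC_{\cF_1^*}(G)$ has a Kazhdan projection. It rests on the two inputs that contragredience does not spoil the exponential norm bound (this is exactly where symmetry of $m$ is needed) and does not leave the controlled class $\cE^{N_0,\varepsilon_0}$ (this is the finitary duality Lemma \ref{lem:duality_finite_representability}), together with the minor technical observation that $\tr E$ may fail to be separable but the norm defining $\cC_{\cF_1^*}(G)$ only feels its separable subrepresentations. The rest is bookkeeping with Theorem \ref{thm:Kazproj}, Proposition \ref{prop:strong_neigh_Em}, and the monotonicity of the completions $\cC_{(\cdot)}(G)$ in the family of representations.
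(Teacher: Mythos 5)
Your proof is correct and follows essentially the same route as the paper: Theorem \ref{thm:Kazproj} combined with Proposition \ref{prop:strong_neigh_Em} for the first assertion, and Lemma \ref{lem:duality_finite_representability} together with the symmetry of $m$ and $\ell$ and the passage to separable subrepresentations of the contragredient for the second. The only (harmless) difference is that you conclude centrality directly from the equivalence (\ref{item:central})$\Leftrightarrow$(\ref{item:exists_dualKP}) of Proposition \ref{prop:central} applied to $\cF_1$ and its dual family, whereas the paper exhibits a weakly self-adjoint subset of $\cF'$ containing $\cF(\cE^{N',\varepsilon'},m+\varepsilon\ell)$ and invokes Corollary \ref{cor:KP_automatically_central}, two formulations of the same duality argument.
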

\begin{proof} By Theorem \ref{thm:Kazproj} there is a strong neighbourhood $\cF'$ of $\cF(\cE,m)$ such that $\cC_{\cF'}(G)$ has a Kazhdan projection and every affine action with linear part in $\cF'$ has a fixed point. By Proposition \ref{prop:strong_neigh_Em} $\cF'$ contains $\cF(\cE^{N,\varepsilon},m')$ for some $\varepsilon,N$, with $m'=m+\varepsilon\ell$. 

It remains to prove that, if $N,\varepsilon$ is replaced by some $N'>N,\varepsilon'<\varepsilon$, the Kazhdan projection can be taken central if $\cE$ is stable by duality. By Corollary \ref{cor:KP_automatically_central} it is enough to show that for some $N',\varepsilon'$, $\cF(\cE^{N',\varepsilon'},m')$ is contained in a weakly self-adjoint subset of $\cF'$. But by Lemma \ref{lem:duality_finite_representability} there is $N' \geq N$ and $\varepsilon'<\varepsilon$ such that $X^*$ belongs to $\cE^{N,\varepsilon}$ for all $X \in \cE^{N',\varepsilon'}$. Since $m$ and $m'$ are symmetric, this implies that the set of representations in $\cF(\cE^{N,\varepsilon},m')$ such that all separable subrepresentations of its dual representation belong to $\cF(\cE^{N,\varepsilon},m')$ contains $\cF(\cE^{N',\varepsilon'},\varepsilon')$. But this set is clearly weakly self-adjoint. This concludes the proof.
\end{proof}

We can now prove the following corollaries, mentioned in the introduction.
\begin{cor}\label{cor:T_implies_robustT} 
If $G$ has property (T), then there exists $\varepsilon>0$ and a central Kazhdan projection in  $\cC_{\cF(\cE(\varepsilon),\varepsilon \ell)}(G)$, and every affine action with linear part in $\cF(\cE(\varepsilon),\varepsilon \ell)$ has a fixed point, where $\cE(\varepsilon)$ is the class of all Banach spaces $E$ satisfying
\begin{equation}\label{eq=X-epsilon-close} \frac 1 2  \left(\|x+y\|^2 + \|x-y\|^2 \right) \leq (1+\varepsilon) \left(\|x\|^2 + \|y\|^2 \right) \ \ \forall x,y \in E.\end{equation}

\end{cor}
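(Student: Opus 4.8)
The plan is to apply Theorem~\ref{thm:Kazproj} to the class $\cF_0$ of unitary representations of $G$, that is, $\cF_0=\cF(\mathcal H,0)$ with $\mathcal H$ the class of Hilbert spaces. Property (T) of $G$ is, by definition, the statement that $\cC_{\cF_0}(G)$ --- which is the full $C^*$-algebra of $G$ --- contains a Kazhdan projection, and (T) implies (FH), i.e.\ $H^1(G,\pi)=0$ for every $(\pi,\mathcal H)\in\cF_0$ (the elementary half of the Delorme--Guichardet theorem, available because $G$ is $\sigma$-compact). Hence the hypotheses of Theorem~\ref{thm:Kazproj} are met with $\cF=\cF_0$, and it provides a strong neighbourhood $\cF'$ of $\cF_0$ such that $\cC_{\cF'}(G)$ has a Kazhdan projection and $H^1(G,\pi)=0$ for all $(\pi,E)\in\cF'$.

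Next I would identify a concrete subclass of $\cF'$. By Proposition~\ref{prop:strong_neigh_Hilb} (applied with $m=0$), $\cF'$ contains $\cF(\cE(\varepsilon),\varepsilon\ell)$ for some $\varepsilon>0$. Restricting from $\cF'$ to this smaller class is harmless: by Theorem~\ref{thm=local_characterization_of_Kazhdan_constant_nondiscrete} the Kazhdan projection in $\cC_{\cF'}(G)$ is witnessed by a compactly supported measure $m_0$ with $\int 1\,dm_0=1$ satisfying $\delta_S^\pi(\pi(m_0)x)\le\tfrac12\delta_S^\pi(x)$ for all $(\pi,E)\in\cF'$ and $x\in E$, an inequality that holds a fortiori on $\cF(\cE(\varepsilon),\varepsilon\ell)$, so by the same theorem $\cC_{\cF(\cE(\varepsilon),\varepsilon\ell)}(G)$ has a Kazhdan projection; and the vanishing of $H^1$ is inherited by the subclass, so every affine action of $G$ with linear part in $\cF(\cE(\varepsilon),\varepsilon\ell)$ has a fixed point.

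It remains to see that the Kazhdan projection can be taken central, which is the one genuinely non-formal point. By Corollary~\ref{cor:KP_automatically_central} it suffices that $\cF(\cE(\varepsilon),\varepsilon\ell)$ be weakly self-adjoint; since $\ell$ is symmetric this reduces, just as in the proof of Corollary~\ref{cor:direct_cor} (which also handles the attendant separability technicality), to the stability of $\cE(\varepsilon)$ under subspaces and under duals. Subspace stability is clear from \eqref{eq=X-epsilon-close}. For duality I would note that, after the substitution $(x,y)\mapsto(x+y,x-y)$, the condition $E\in\cE(\varepsilon)$ is equivalent to the two-sided estimate
\[\frac{1}{1+\varepsilon}\bigl(\|x\|^2+\|y\|^2\bigr)\ \le\ \frac12\bigl(\|x+y\|^2+\|x-y\|^2\bigr)\ \le\ (1+\varepsilon)\bigl(\|x\|^2+\|y\|^2\bigr)\qquad\forall\,x,y\in E,\]
equivalently, to the operator $T(x,y)=\tfrac{1}{\sqrt2}(x+y,\,x-y)$ on $E\oplus_2 E$ being invertible with $\|T\|\le\sqrt{1+\varepsilon}$ and $\|T^{-1}\|\le\sqrt{1+\varepsilon}$. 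Since $(E\oplus_2 E)^*=E^*\oplus_2 E^*$ and $T^*$ is given by the same formula $(\xi,\eta)\mapsto\tfrac{1}{\sqrt2}(\xi+\eta,\,\xi-\eta)$ with $\|T^*\|=\|T\|$ and $\|(T^*)^{-1}\|=\|T^{-1}\|$, the same two-sided estimate holds in $E^*$, i.e.\ $E^*\in\cE(\varepsilon)$. This yields the weak self-adjointness, hence the centrality, and completes the argument.

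Everything before the centrality step is a direct invocation of Theorems~\ref{thm=local_characterization_of_Kazhdan_constant_nondiscrete} and~\ref{thm:Kazproj} together with Proposition~\ref{prop:strong_neigh_Hilb}; the only step I expect to require real (if short) thought is the self-duality of $\cE(\varepsilon)$, whose key is the reformulation of the one-sided inequality \eqref{eq=X-epsilon-close} as the symmetric two-sided one above.
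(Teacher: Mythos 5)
Your proposal is correct, and its skeleton coincides with the paper's: the paper proves this corollary by citing Corollary \ref{cor:direct_cor} for $\cE$ the class of Hilbert spaces (together with Proposition \ref{prop:strong_neigh_Hilb}), and the proof of that corollary is exactly your first two steps --- Theorem \ref{thm:Kazproj} applied to the unitary representations (using, as you do, the classical fact that (T) gives a Kazhdan projection in $C^*(G)$ and Delorme's implication (T)$\Rightarrow$(FH); calling the latter the ``elementary half'' of Delorme--Guichardet is a mislabel, as it is the nontrivial direction, but it is a standard theorem and its use matches the paper's) --- followed by the extraction of the concrete subclass via the strong-neighbourhood propositions, the passage to a subclass being harmless exactly as you argue. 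The one place where you genuinely depart from the paper is the centrality step. The paper obtains weak self-adjointness through Lemma \ref{lem:duality_finite_representability}, which controls duals of the classes $\cE^{N,\varepsilon}$ only approximately, at the cost of shrinking $(N,\varepsilon)$ to $(N',\varepsilon')$; you instead observe that the specific class $\cE(\varepsilon)$ is \emph{exactly} stable under duality, by rewriting \eqref{eq=X-epsilon-close} as the symmetric two-sided estimate (apply the inequality to $(x+y,x-y)$ and use homogeneity) and noting that the involution $T(x,y)=\tfrac1{\sqrt2}(x+y,x-y)$ on $E\oplus_2 E$ satisfies $\|T^*\|=\|T\|$ and $(T^*)^{-1}=T^*$ on $E^*\oplus_2 E^*$. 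This is correct, and combined with subspace stability (needed since the contragredient lives on $\tr E\subset E^*$) and the same separable-subrepresentation device as in the proof of Corollary \ref{cor:direct_cor}, it yields weak self-adjointness of $\cF(\cE(\varepsilon),\varepsilon\ell)$ itself, hence centrality by Corollary \ref{cor:KP_automatically_central}. What each route buys: yours is more elementary and loss-free, but is special to the parallelogram-type condition; the paper's detour through Lemma \ref{lem:duality_finite_representability} is what makes the general form of Corollary \ref{cor:direct_cor} (arbitrary $\cE$ closed under finite representability and duality) available, of which the present corollary is then a one-line specialization.
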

\begin{proof}
This is Corollary \ref{cor:direct_cor} for $\cE$ the class of Hilbert spaces (recall Proposition \ref{prop:strong_neigh_Hilb}).
\end{proof}
\begin{cor}\label{cor:FE_implies_robustFE} Let $\cE$ be a class of superreflexive Banach spaces closed under finite representability. If $G$ has (F$_{\cE}$) then there exists $N\in \N$ and $\varepsilon>0$ such that $\cC_{\cF(\cE^{N,\varepsilon},\varepsilon \ell)}(G)$ has a Kazhdan projection and every affine action with linear part in $\cF(\cE^{N,\varepsilon},\varepsilon \ell)$ has a fixed point. The Kazhdan projection is central if $\cE$ is stable by duality.
\end{cor}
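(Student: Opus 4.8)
The plan is to obtain this corollary as an almost immediate consequence of the two preceding results, Proposition \ref{prop:FE_implies_KazhdanProj} and Corollary \ref{cor:direct_cor}: the whole point is to check that property (F$_\cE$) supplies exactly the two hypotheses of Corollary \ref{cor:direct_cor} for the function $m\equiv 0$. First I would set $\cF_0=\cF(\cE,0)$, the set of isometric representations of $G$ on a space in $\cE$; since $\cE$ is superreflexive and closed under finite representability, $\cF_0$ is closed (Remark \ref{rem:closure_implies_fin_rep+bound}), so the first bullet of Proposition \ref{prop:FE_implies_KazhdanProj} applies and gives that $\cC_{\cF_0}(G)$ has a Kazhdan projection. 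Second, for any $(\pi,E)\in\cF_0$, a continuous affine action of $G$ with linear part $\pi$ is an affine isometric action on a space in $\cE$, hence has a fixed point by (F$_\cE$); equivalently $H^1(G,\pi)=0$ for every $(\pi,E)\in\cF_0$. Thus both hypotheses of Corollary \ref{cor:direct_cor} hold with $m\equiv 0$.

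Then I would simply invoke Corollary \ref{cor:direct_cor}: it produces $N\in\N$ and $\varepsilon>0$ such that $\cC_{\cF(\cE^{N,\varepsilon},\varepsilon\ell)}(G)$ has a Kazhdan projection and every affine action with linear part in $\cF(\cE^{N,\varepsilon},\varepsilon\ell)$ has a fixed point — this is the first assertion, since $0+\varepsilon\ell=\varepsilon\ell$. For the last sentence, the function $m\equiv 0$ is symmetric, so the second half of Corollary \ref{cor:direct_cor} applies and shows that, after possibly replacing $\varepsilon$ by a smaller number and $N$ by a larger one, the Kazhdan projection in $\cC_{\cF(\cE^{N,\varepsilon},\varepsilon\ell)}(G)$ is central whenever $\cE$ is stable by duality. (This is the same mechanism used for Corollary \ref{cor:T_implies_robustT}, with Hilbert space replaced by the class $\cE$.)

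So the proof of the corollary itself is a short assembly, and there is no real obstacle inside it; the substance is entirely upstream. The genuinely non-formal ingredients it relies on are Theorem \ref{thm:Kazproj} (whose proof runs the ultraproduct-of-affine-actions construction of \S\ref{sec:topology} to turn ``$\cC_{\cF_0}(G)$ has a Kazhdan projection and $H^1=0$'' into the corresponding statement on a strong neighbourhood) together with Lemma \ref{lem:FE_implies_uniform_spectral_gap}, which feeds Proposition \ref{prop:FE_implies_KazhdanProj} via the uniform spectral gap it extracts from (F$_\cE$). Within the assembly, the one step requiring a little care is the centrality clause: here one must invoke Lemma \ref{lem:duality_finite_representability} to convert the qualitative hypothesis ``$\cE$ stable under duality'' into the finitary statement that $\cF(\cE^{N',\varepsilon'},\varepsilon'\ell)$ is contained in a weakly self-adjoint subclass (those $(\pi,E)$ all of whose separable dual subrepresentations stay in $\cF(\cE^{N,\varepsilon},\varepsilon\ell)$), so that Corollary \ref{cor:KP_automatically_central} forces the Kazhdan projection to be central; this is exactly what costs the passage from $(N,\varepsilon)$ to a larger $N'$ and smaller $\varepsilon'$, which is harmless.
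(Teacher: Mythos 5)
Your proof is correct and is essentially the paper's own argument: the paper's proof is literally ``Combine Proposition \ref{prop:FE_implies_KazhdanProj} and Corollary \ref{cor:direct_cor}'', and you have simply made explicit the verification (with $\cF(\cE,0)$ closed, (F$_\cE$) giving $H^1(G,\pi)=0$, and $m\equiv 0$ symmetric for the centrality clause via Lemma \ref{lem:duality_finite_representability} and Corollary \ref{cor:KP_automatically_central}) that the paper leaves implicit.
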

\begin{proof}
Combine Proposition \ref{prop:FE_implies_KazhdanProj} and Corollary \ref{cor:direct_cor}.\end{proof}
\begin{rem}
The (proof of the) preceding corollary says that if a class of superreflexive Banach spaces $\cE$ is stable by finite representability then (F$_{\cE}$) implies robust property (T) with respect to $\cE$ (and more generally to $\cE^{N,\varepsilon}$ for some $N,\varepsilon$), see \S \ref{subsection:examples} for the definition of Oppenheim's robust property (T). Oppenhein proved that the converse holds for every set $\cE$ such that every space $X \in \cE$ there is $X' \in \cE$ isometric to $X \oplus_p \C$ for some $1 \leq p \leq \infty$. Together, this shows that (F$_{\cE}$) is equivalent to robust (T) with respect to $\cE$ if $\cE$ is the class of Hilbert spaces, or the class of spaces $C$-isomorphic to Hilbert spaces, or (for some $1<p<\infty$) the class of subspaces of $L_p$ spaces, or the class of subquotients of $L_p$ spaces... We can also replace $L_p$ spaces by non-commutative $L_p$ spaces because non-commutative $L_p$ spaces are closed under ultraproducts \cite{raynaud}. Corollary \ref{cor:FLp_implies_robustFLp} will also imply that (F$_{L_p}$) is equivalent to robust (T) with respect to $L_p$ spaces. 
\end{rem}
Recall that one says that $G$ has ($\overline{\mathrm{F}}_{\cE}$) if every affine action on a space in $\cE$ whose linear part is a uniformly bounded representation has a fixed point.

\begin{cor}\label{cor:FE} Let $\cE$ be a class of superreflexive Banach spaces closed under finite representability. The following are equivalent~:
\begin{enumerate}
\item\label{item:FbarE} $G$ has ($\overline{\mathrm{F}}_{\cE}$).
\item\label{item:almoststrongTFEclose} For every $C>0$, there exists $N \in \N$ and $\varepsilon >0$ such that $\cC_{\cF(\cE^{N,\varepsilon},\varepsilon \ell +C)}(G)$ has a Kazhdan projection and $H^1(G;\pi)=0$ for every $(\pi,E) \in \cF(\cE^{N,\varepsilon},\varepsilon \ell +C)$.
\item\label{item:almoststrongTFE} For every $C>0$, there exists $\varepsilon >0$ such that $\cC_{\cF(\cE,\varepsilon \ell +C)}(G)$ has a Kazhdan projection and $H^1(G;\pi)=0$ for every $(\pi,E) \in \cF(\cE,\varepsilon \ell +C)$.
\item\label{item:almoststrongT} (If $\cE$ contains a space of infinite dimension) For every $C>0$, there exists $\varepsilon >0$ such that $\cC_{\cF(\cE,\varepsilon \ell +C)}(G)$ has a Kazhdan projection.
\end{enumerate}
In that case, and if $\cE$ is stable by duality, then $\cC_{\cF(\cE,\varepsilon \ell +C)}(G)$ has a central Kazhdan projection.
\end{cor}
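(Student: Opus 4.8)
The plan is to establish the cycle $(\ref{item:FbarE})\Rightarrow(\ref{item:almoststrongTFEclose})\Rightarrow(\ref{item:almoststrongTFE})\Rightarrow(\ref{item:almoststrongT})\Rightarrow(\ref{item:FbarE})$, and then read off centrality. Two of the arrows are formal. For $(\ref{item:almoststrongTFEclose})\Rightarrow(\ref{item:almoststrongTFE})$, since $\cE\subset\cE^{N,\varepsilon}$ we have $\cF(\cE,\varepsilon\ell+C)\subset\cF(\cE^{N,\varepsilon},\varepsilon\ell+C)$; the identity of $C_c(G)$ extends to a norm-decreasing algebra homomorphism between the two completions which fixes $\{f:\int f=1\}$ and the relation $f\ast p=(\int f)p$, so by Lemma \ref{lem:intrinsic_char_K_proj} it carries a Kazhdan projection to a Kazhdan projection, while the vanishing of $H^1$ trivially passes to the subclass. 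And $(\ref{item:almoststrongTFE})\Rightarrow(\ref{item:almoststrongT})$ just drops the cohomological clause.

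For $(\ref{item:FbarE})\Rightarrow(\ref{item:almoststrongTFEclose})$ I would fix $C$ and let $\cF_C\subset\cR$ be the class of representations on a space in $\cE$ with $\sup_g\|\pi(g)\|\le e^{C}$; it is closed by Remark \ref{rem:closure_implies_fin_rep+bound} because $\cE$ is closed under finite representability, and every affine action with linear part in $\cF_C$ has a fixed point by $(\ref{item:FbarE})$, so Lemma \ref{lem:FE_implies_uniform_spectral_gap} yields a uniform spectral gap on $\cF_C$. Renorming each $E$ appearing here by $\|x\|'=\sup_g\|\pi(g)x\|$ makes $\pi$ isometric on a space $e^{C}$-isomorphic to a space in $\cE$; this family forms a superreflexive class (\S\ref{subsection:superreflexivity}) still enjoying a uniform spectral gap, so \cite[Theorem 1.2]{drutunowak} provides a Kazhdan projection, which transfers back to $\cC_{\cF_C}(G)$ since the old and new norms on $C_c(G)$ are equivalent. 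Then Theorem \ref{thm:Kazproj} gives a strong neighbourhood $\cF'$ of $\cF_C$ with $\cC_{\cF'}(G)$ having a Kazhdan projection and $H^1(G;\pi)=0$ on $\cF'$, and Proposition \ref{prop:strong_neigh_Em} (with the constant weight $m\equiv C$) shows $\cF'\supset\cF(\cE^{N,\varepsilon},C+\varepsilon\ell)$ for some $N,\varepsilon$, which is $(\ref{item:almoststrongTFEclose})$.

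The real content is $(\ref{item:almoststrongT})\Rightarrow(\ref{item:FbarE})$, which I would route as $(\ref{item:almoststrongT})\Rightarrow(\ref{item:almoststrongTFE})\Rightarrow(\ref{item:FbarE})$. The last arrow is immediate: any uniformly bounded $(\pi,E)$ with $E\in\cE$ and $\sup_g\|\pi(g)\|\le M$ lies in $\cF(\cE,\varepsilon\ell+\log M)$, so $(\ref{item:almoststrongTFE})$ at level $\log M$ forces $H^1(G;\pi)=0$. For $(\ref{item:almoststrongT})\Rightarrow(\ref{item:almoststrongTFE})$, fix $C$, apply $(\ref{item:almoststrongT})$ at level $C+1$ to get $\varepsilon>0$ and a Kazhdan projection in $\cC_{\cF(\cE,\varepsilon\ell+C+1)}(G)$, hence (Theorem \ref{thm=local_characterization_of_Kazhdan_constant_nondiscrete}) a compactly supported probability measure $m$ with $\delta_S^{\pi'}(\pi'(m)x)\le\frac12\delta_S^{\pi'}(x)$ on that class; the projection descends to level $C$. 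To get $H^1(G;\pi)=0$ for $(\pi,E)\in\cF(\cE,\varepsilon\ell+C)$ with a cocycle $b$, I would, for small $\lambda>0$, linearize the affine action $\sigma_\lambda(g)y=\pi(g)y+\lambda b(g)$ to the representation $\rho_\lambda(g)(y,t)=(\pi(g)y+\lambda tb(g),t)$ on $E\oplus_p\C$ for a suitable exponent $p$; the cocycle identity forces $\|b(g)\|$ to grow at the same exponential rate $\le\varepsilon$ as $\pi$, so $\|\rho_\lambda(g)\|\le e^{\varepsilon\ell(g)+C+1}$ once $\lambda$ is small, and hence $\rho_\lambda$ lies in the class carrying $m$ provided $E\oplus_p\C$ is finitely representable in $\cE$. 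Then $m$ contracts $\delta_S^{\rho_\lambda}$; restricting to the slice $\{(x,1):x\in E\}$, on which $\rho_\lambda$ acts as $\sigma_\lambda$, gives $\delta_S^{\sigma_\lambda}(\sigma_\lambda(m)x)\le\frac12\delta_S^{\sigma_\lambda}(x)$ for all $x$, so $\sigma_\lambda$ has a fixed point by Theorem \ref{thm=local_characterization_of_Kazhdan_constant_nondiscrete}, whence $\lambda b$, and so $b$, is a coboundary. The hard part — and the only place where the extra hypothesis in $(\ref{item:almoststrongT})$ is used — is the claim that for some $p$ the one-dimensional summand can be absorbed, i.e. that $E\oplus_p\C$ is finitely representable in $\cE$: I would choose $p$ via Krivine's theorem applied to an infinite-dimensional member of $\cE$ (which, together with closure under finite representability, forces $\ell_p$ and enough ``$\ell_p$-room'' into $\cE$), and the delicate point is verifying the absorption; it is exactly this that fails when $\cE$ contains only finite-dimensional spaces.

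Finally, when $\cE$ is stable by duality the weight $\varepsilon\ell+C$ is symmetric and the contragredient of a representation in $\cF(\cE,\varepsilon\ell+C)$ acts on the dual space, which lies in $\cE$ because superreflexive spaces are reflexive; thus $\cF(\cE,\varepsilon\ell+C)$ is weakly self-adjoint and Corollary \ref{cor:KP_automatically_central} promotes the Kazhdan projection produced in $(\ref{item:almoststrongTFE})$ to a central one.
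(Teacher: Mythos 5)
Your arrows (\ref{item:FbarE})$\Rightarrow$(\ref{item:almoststrongTFEclose})$\Rightarrow$(\ref{item:almoststrongTFE})$\Rightarrow$(\ref{item:almoststrongT}), the easy implication (\ref{item:almoststrongTFE})$\Rightarrow$(\ref{item:FbarE}), and the final centrality argument via Corollary \ref{cor:KP_automatically_central} are all correct and essentially the paper's route; in particular your renorming $\|x\|'=\sup_g\|\pi(g)x\|$ plus Lemma \ref{lem:FE_implies_uniform_spectral_gap} and Dru\c{t}u--Nowak is a legitimate way to supply the Kazhdan-projection hypothesis of Corollary \ref{cor:direct_cor} before invoking Theorem \ref{thm:Kazproj} and Proposition \ref{prop:strong_neigh_Em}. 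The problem is the closing arrow, where you try to prove (\ref{item:almoststrongT})$\Rightarrow$(\ref{item:almoststrongTFE}) directly.

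That step has a genuine gap, and you name it yourself without filling it: the ``absorption'' claim that $E\oplus_p\C$ is finitely representable in $\cE$ for some $p$. Krivine's theorem only gives that some $\ell_p$ is finitely representable in $\cE$; together with $E\in\cE$ this does not produce, inside a single member of $\cE$, almost isometric copies of $Y\oplus_p\C$ for every finite-dimensional $Y\subset E$ --- there is no reason an extra direction should sit ``$\ell_p$-orthogonally'' next to copies of $Y$ (Tsirelson-type superreflexive classes are exactly the kind of example where this is unclear), and this is the whole content of the implication. The paper avoids the issue by closing the cycle with (\ref{item:almoststrongT})$\Rightarrow$(\ref{item:FbarE}) instead: for ($\overline{\mathrm{F}}_{\cE}$) one only needs $H^1(G;\pi)=0$ for \emph{uniformly bounded} $\pi$, and then Lafforgue's linearization is run not on $E\oplus_p\C$ but on any $E'\in\cE$ containing $E$ as a hyperplane with an arbitrary (merely isomorphic) complementary norm; such an $E'$ is produced by embedding $E$ isometrically into the nonseparable ultraproduct $\prod_\cU E_n$ of an exhausting sequence of finite-dimensional subspaces and adjoining one vector outside $E$ --- closure under finite representability puts this span in $\cE$, and this is precisely where the infinite-dimensionality hypothesis in (\ref{item:almoststrongT}) is used. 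For uniformly bounded $\pi$ the linearized representation has norm growth $O(\ell(g))$, so after rescaling the cocycle it lies in $\cF(\cE,\varepsilon\ell+C)$ for a pair $(C,\varepsilon(C))$ furnished by (\ref{item:almoststrongT}) with $C$ chosen \emph{after} the representation; in your version, by contrast, even granting absorption, the asserted bound $\|\rho_\lambda(g)\|\le e^{\varepsilon\ell(g)+C+1}$ for small $\lambda$ is false, since the cocycle relation only gives $\|b(g)\|\lesssim \ell(g)\,e^{\varepsilon\ell(g)+C}$ and the linear factor $\ell(g)$ cannot be absorbed into the additive constant, only into a strictly larger exponential rate; you would have to retreat to some $\varepsilon'<\varepsilon$ (which would still suffice for (\ref{item:almoststrongTFE}), but needs to be said). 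So the quantitative slip is repairable, but the absorption claim is a missing key step, and the paper's hyperplane-in-an-ultraproduct construction is the device that replaces it.
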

\begin{proof} (\ref{item:FbarE}) $\implies$ (\ref{item:almoststrongTFEclose}) is Corollary \ref{cor:direct_cor}. The implications (\ref{item:almoststrongTFEclose}) $\implies$ (\ref{item:almoststrongTFE})$\implies$(\ref{item:almoststrongT}) and (\ref{item:almoststrongTFE})$\implies$ (\ref{item:FbarE}) are obvious. 

Assume (\ref{item:almoststrongT}). By an argument of Lafforgue \cite[\S 5.3]{lafforguefastfourier}, this implies that $H^1(G;\pi)=0$ for every uniformly bounded representation $(\pi,E)$ with $E$ isomorphic to a hyperplane in a space in $\cE$. So (\ref{item:FbarE}) is a consequence of the following claim: every separable space $E$ in $\cE$ is isomorphic to a hyperplane in another space $E'$ in $\cE$. If $E$ is finite dimensional this is obvious because we assumed that $\cE$ contains a space of infinite dimension, and in particular a subspace of dimension $\mathrm{dim}(E)+1$. Otherwise, let $E_n \subset E$ be an increasing sequence of finite dimensional subspaces such that $\cup_n E_n$ is dense in $E$. Let $\cU$ be a cofinal ultrafilter in $\N$. Then $E$ is isometric to a subspace of $\prod_{\cU} E_n$ by sending $x \in \cup_n E_n$ to $(1_{x \in E_n} x)_\cU$ and extending by continuity. It is a strict subspace because $\prod_{\cU} E_n$ is not separable, so if $x \in \prod_{\cU} E_n \setminus E$, we have that the linear span of $x$ and $E$ belongs to $\cE$ and contains $E$ as a hyperplane. 
\end{proof}
The point (\ref{item:almoststrongT}) is almost strong property (T) with respect to $\cE$, except on the order of the quantifiers, which should be $\exists \varepsilon,\forall C$ instead of $\forall C, \exists \varepsilon$. This ``small'' difference is a bit unfortunate, because Shalom conjectured that hyperbolic groups do not have ($\overline{\mathrm{F}}_{\ell_2}$), whereas Lafforgue \cite{lafforguestrongt} proved that hyperbolic groups do not have strong property (T) with respect to Hilbert spaces.

\subsection{Application to $L_p$ spaces}\label{subsec:Lp} 
We now prove the following result for property (F$_{L_p}$).
\begin{cor}\label{cor:FLp_implies_robustFLp} Let $1<p<\infty$. The following are equivalent.
\begin{enumerate}
\item\label{item:FLp} $G$ has property (F$_{L_p}$).
\item\label{item:robustTLp+eps} there exists $\varepsilon>0$ such that $\cC_{\cF(\{L_q,q \in [p-\varepsilon,p+\varepsilon]\},\varepsilon \ell)}(G)$ has a Kazhdan projection and every affine action with linear part in $\cF(\{L_q,q \in [p-\varepsilon,p+\varepsilon]\},\varepsilon \ell)$ has a fixed point.
\item\label{item:robustTLp} there exists $\varepsilon>0$ such that $\cC_{\cF(L_p,\varepsilon \ell)}(G)$ has a Kazhdan projection.
\end{enumerate}
In that case, and for all $\varepsilon$ small enough, the projection in (\ref{item:robustTLp+eps}) and (\ref{item:robustTLp}) are central if and only if $G$ has (F$_{L_{p'}}$) where $p' = \frac{p}{p-1}$ is the conjugate exponent of $p$.
\end{cor}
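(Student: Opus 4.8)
The plan is to run the cycle (\ref{item:FLp})$\Rightarrow$(\ref{item:robustTLp+eps})$\Rightarrow$(\ref{item:robustTLp})$\Rightarrow$(\ref{item:FLp}) and then read off the centrality statement from Proposition \ref{prop:central} applied to the conjugate exponent. For (\ref{item:FLp})$\Rightarrow$(\ref{item:robustTLp+eps}): by Proposition \ref{prop:FE_implies_KazhdanProj} (with $\cE=L_p$) together with Remark \ref{rem:closure_implies_fin_rep+bound}, (F$_{L_p}$) gives a Kazhdan projection in $\cC_{\cG(L_p,0)}(G)$. To apply Theorem \ref{thm:Kazproj} I also need $H^1(G,\pi)=0$ for every $(\pi,E)\in\cG(L_p,0)$: realise such $(\pi,E)$ as a subrepresentation of an isometric $(\pi',E')$ on an $L_p$-space; for $b\in Z^1(G,\pi)$, (F$_{L_p}$) makes $b$ a coboundary for $\pi'$, hence bounded, so $K=\overline{\mathrm{conv}}\{b(g):g\in G\}$ is a nonempty bounded closed convex subset of the uniformly convex space $E$ on which the affine action $\sigma$ acts by affine isometries preserving $K$, and the circumcenter of $K$ (the unique minimiser of $x\mapsto\sup_{y\in K}\|x-y\|$, existing by uniform convexity) is a $\sigma$-fixed point in $E$, so $b\in B^1(G,\pi)$. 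Theorem \ref{thm:Kazproj} then produces a strong neighbourhood $\cF'$ of $\cG(L_p,0)$ with $\cC_{\cF'}(G)$ having a Kazhdan projection and $H^1$ vanishing on $\cF'$; by Proposition \ref{prop:strong_neigh_Lp} (and Proposition \ref{prop:strong_neigh_Em} to absorb ``isomorphic to'' versus ``equal to'' an $L_q$-space) $\cF'$ contains $\cF(\{L_q:|q-p|\le\varepsilon\},\varepsilon\ell)$ for some $\varepsilon>0$, and restricting the Kazhdan projection along the quotient map (Lemma \ref{lem:intrinsic_char_K_proj}) gives (\ref{item:robustTLp+eps}).

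The implication (\ref{item:robustTLp+eps})$\Rightarrow$(\ref{item:robustTLp}) is the same descent along $\cC_{\cF(\{L_q:|q-p|\le\varepsilon\},\varepsilon\ell)}(G)\to\cC_{\cF(L_p,\varepsilon\ell)}(G)$, since $\cF(L_p,\varepsilon\ell)\subset\cF(\{L_q:|q-p|\le\varepsilon\},\varepsilon\ell)$. For (\ref{item:robustTLp})$\Rightarrow$(\ref{item:FLp}) the key point is that $L_p\oplus_p\R$ is again an $L_p$-space, which converts an affine action into a linear representation with small norm bound. Given an isometric affine action $\sigma(g)x=\pi(g)x+b(g)$ on a (separable, after a routine reduction) $L_p$-space $E$, put $B=\max_{s\in S}\|b(s)\|$ and, for $t>0$ small enough in terms of $B$ and $\varepsilon$, define $\tilde\pi_t$ on $E\oplus_p\R$ by $\tilde\pi_t(g)(v,r)=(\pi(g)v+trb(g),r)$; an elementary estimate gives $\|\tilde\pi_t(s)\|\le e^{\varepsilon}$ for $s\in S$, hence $\|\tilde\pi_t(g)\|\le e^{\varepsilon\ell(g)}$ (as $S$ is symmetric), so $(\tilde\pi_t,E\oplus_p\R)\in\cF(L_p,\varepsilon\ell)$. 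By Theorem \ref{thm=local_characterization_of_Kazhdan_constant_nondiscrete} applied to $\cF(L_p,\varepsilon\ell)$ there is a compactly supported $m$ with $\int 1\,dm=1$ and $\delta_S^\rho(\rho(m)y)\le\frac12\delta_S^\rho(y)$ for all $\rho$ in that family; evaluating at $\rho=\tilde\pi_t$ and $y=(x,1/t)$, on which $\tilde\pi_t(g)$ acts as $(\sigma(g)x,1/t)$, gives $\delta_S^\sigma(\sigma(m)x)\le\frac12\delta_S^\sigma(x)$ for every $x\in E$, and the last sentence of Theorem \ref{thm=local_characterization_of_Kazhdan_constant_nondiscrete} (whose converse half only uses boundedness of the linear part on $S$) produces a fixed point for $\sigma$. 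This closes the cycle.

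For the centrality statement, note that since $L_p$ is reflexive with $L_p^*=L_{p'}$ and $\ell$ is symmetric, the dual family $\cF(L_p,\varepsilon\ell)^*$ equals $\cF(L_{p'},\varepsilon\ell)$ (and $\cF(\{L_q:|q-p|\le\varepsilon\},\varepsilon\ell)^*$ is a family of $L_r$'s with $r$ ranging over a neighbourhood of $p'$). By Proposition \ref{prop:central}, the Kazhdan projection in (\ref{item:robustTLp}) is central if and only if $\cC_{\cF(L_{p'},\varepsilon\ell)}(G)$ has a Kazhdan projection, which by the already-established equivalence (\ref{item:FLp})$\Leftrightarrow$(\ref{item:robustTLp}) for the exponent $p'$ holds precisely when $G$ has (F$_{L_{p'}}$); the ``if'' direction requires $\varepsilon$ small enough that (F$_{L_{p'}}$) yields a Kazhdan projection at this $\varepsilon$, which it does for all small $\varepsilon$ by descent from a larger norm bound. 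The same reasoning, via the neighbourhood-of-$p'$ description, handles the projection in (\ref{item:robustTLp+eps}).

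I expect the main friction to be bookkeeping rather than conceptual: matching the explicit families in (\ref{item:robustTLp+eps}) with the strong neighbourhood coming out of Theorem \ref{thm:Kazproj} and Propositions \ref{prop:strong_neigh_Lp}--\ref{prop:strong_neigh_Em}, keeping the various $\varepsilon$'s compatible between $p$ and $p'$, and carrying out the reduction to separable $L_p$-spaces (where a separable invariant subspace need not itself be an $L_p$-space, so one works in $\cG(L_p,\varepsilon\ell)=\overline{\cF(L_p,\varepsilon\ell)}$ and uses Remark \ref{rem:closure_implies_fin_rep+bound}). The only genuinely new analytic inputs are the circumcenter argument giving vanishing of $H^1$ on subspaces of $L_p$ and the $L_p\oplus_p\R$ trick with its elementary operator-norm estimate, both short.
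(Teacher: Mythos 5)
Your cycle (\ref{item:FLp})$\Rightarrow$(\ref{item:robustTLp+eps})$\Rightarrow$(\ref{item:robustTLp})$\Rightarrow$(\ref{item:FLp}) and your treatment of centrality via Proposition \ref{prop:central} and $L_p^*=L_{p'}$ with $\varepsilon$ taken small enough follow the paper's own route quite closely: the paper also proves (\ref{item:FLp})$\Rightarrow$(\ref{item:robustTLp+eps}) by running the proof of Corollary \ref{cor:FE_implies_robustFE} (i.e.\ Proposition \ref{prop:FE_implies_KazhdanProj} plus Theorem \ref{thm:Kazproj}) with Proposition \ref{prop:strong_neigh_Em} replaced by Proposition \ref{prop:strong_neigh_Lp}, treats (\ref{item:robustTLp+eps})$\Rightarrow$(\ref{item:robustTLp}) as obvious descent, and handles centrality exactly as you do with $\min(\varepsilon,\varepsilon_1)$. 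Where you genuinely diverge is (\ref{item:robustTLp})$\Rightarrow$(\ref{item:FLp}): the paper simply cites Oppenheim, whereas you reprove it via the $E\oplus_p\C$ cone trick (turning an isometric affine action into a linear representation with $\|\tilde\pi_t(g)\|\leq e^{\varepsilon\ell(g)}$) combined with the fixed-point criterion in the last sentence of Theorem \ref{thm=local_characterization_of_Kazhdan_constant_nondiscrete}; this is essentially Oppenheim's argument (the paper's remark after Corollary \ref{cor:FE_implies_robustFE} alludes to the $X\oplus_p\C$ mechanism), but making it self-contained inside the framework of Theorem \ref{thm=local_characterization_of_Kazhdan_constant_nondiscrete} is a real gain, and your computation at the points $(x,1/t)$ is correct. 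You also make explicit, via the circumcenter argument, the step that (F$_{L_p}$) kills $H^1$ for every $(\pi,E)\in\cG(L_p,0)$, which the paper leaves implicit in the application of Lemma \ref{lem:FE_implies_uniform_spectral_gap}. Two small cautions: the spaces here are complex, so write $E\oplus_p\C$; and the passing identification $\cG(L_p,\varepsilon\ell)=\overline{\cF(L_p,\varepsilon\ell)}$ is not justified (for non-isometric bounds one cannot invoke Proposition \ref{prop:ultraproduct_Lp} to see that the continuous part of an ultraproduct is again an $L_p$-space) and is also unnecessary: the contraction inequality $\delta_S^\pi(\pi(m)x)\leq\tfrac12\delta_S^\pi(x)$ is pointwise and passes directly to subrepresentations, and any point of a nonseparable $L_p$-representation lies in a separable invariant $L_p$-subspace (alternate between taking the invariant span and the generated sublattice), which settles the separability reduction without that identification.
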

\begin{rem}\label{rem:noncentralKP} This Corollary provides natural examples where there is a Kazhdan projection, but not a central Kazhdan projection. Indeed, consider $\Gamma$ a discrete Gromov-hyperbolic group with property (T) (for example a cocompact lattice in $\mathrm{Sp}(n,1)$, or a suitable random group). It was proved by Bader, Furman, Gelander and Monod  \cite{baderfurmangelandermonod} that, as every group with property (T), $\Gamma$ has (F$_{L_p}$) for every  $1<p\leq 2$. On the other hand, Yu  \cite{yu} proved that every hyperbolic group with property (T) has a proper action on $\ell_{p'}$ for some $1<p \leq 2$, and in particular does not have (F$_{L_{p'}}$). By the above corollary, there is $\varepsilon$ such that $\cC_{\cF(L_p,\varepsilon \ell)}(G)$ has a Kazhdan projection, but not a central Kazhdan projection.
\end{rem}

\begin{proof}  (\ref{item:robustTLp+eps})$\implies$(\ref{item:robustTLp}) is obvious, and (\ref{item:robustTLp})$\implies$(\ref{item:FLp}) is \cite{oppenheim}. (\ref{item:FLp})$\implies$ (\ref{item:robustTLp+eps}) is not formally a consequence of Corollary \ref{cor:FE_implies_robustFE} because $L_p$ spaces are not stable under subspaces. However, for the first part, the same proof works with Proposition \ref{prop:strong_neigh_Em} replaced by Proposition \ref{prop:strong_neigh_Lp}.

For the second part, if $G$ has (F$_{L_p'}$) and (F$_{L_p}$) then by the first part there is also, for all small enough $\varepsilon_1$, a Kazhdan projection for $\cF(\{L_{q'},q \in [p-\varepsilon_1,p+\varepsilon_1]\},\varepsilon_1 \ell)$, which is the dual of $\cF(\{L_{q},q \in [p-\varepsilon_1,p+\varepsilon_1]\},\varepsilon_1 \ell)$. By replacing $\varepsilon$ by $\min(\varepsilon,\varepsilon_1)$, the implication (\ref{item:exists_dualKP})$\implies$(\ref{item:central}) in Proposition \ref{prop:central} implies that the Kazhdan projection for $\cF(\{L_q,q \in [p-\varepsilon,p+\varepsilon]\},\varepsilon \ell)$ is central, and hence also for $\cF(\{L_p,\varepsilon \ell)$. Conversely, assume that  $\cC_{\cF(L_p,\varepsilon \ell)}(G)$ has a central projection for some $\varepsilon>0$, then by Proposition \ref{prop:central} $\cC_{\cF(L_{p'},\varepsilon \ell)}(G)$ has a Kazhdan projection and hence $G$ has (F$_{L_p}$).
\end{proof}

Similarly using Proposition \ref{prop:strong_neigh_discrete}.
\begin{cor}\label{cor:Delorme-Guichardet_Banach} Let $\cE$ be a class of superreflexive Banach spaces stable by ultraproducts. Then, for discrete groups, (F$_{\cE}$) implies robust (T) with respect to $\cE$.
\end{cor}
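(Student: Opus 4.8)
The plan is to mimic the proof of Corollary~\ref{cor:direct_cor}, replacing Proposition~\ref{prop:strong_neigh_Em} by Proposition~\ref{prop:strong_neigh_discrete}. Set $\cF := \cG(\cE,0)$, the set of equivalence classes in $\cR$ of subrepresentations of isometric representations on spaces in $\cE$. Since $G$ is discrete and $\cE$ is stable under ultraproducts, Proposition~\ref{prop:strong_neigh_discrete} tells us that $\cF$ is closed. By Proposition~\ref{prop:FE_implies_KazhdanProj} (discrete case, third bullet) $\cC_{\cF}(G)$ has a Kazhdan projection, and the same proof --- via the hypothesis of Lemma~\ref{lem:FE_implies_uniform_spectral_gap} --- shows that $H^1(G,\pi)=0$ for every $(\pi,E)\in\cF$.

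Next I would apply Theorem~\ref{thm:Kazproj} to this $\cF$: since $\cC_{\cF}(G)$ has a Kazhdan projection and $H^1(G,\pi)=0$ on $\cF$, there is a strong neighbourhood $\cF'$ of $\cF$ such that $\cC_{\cF'}(G)$ has a Kazhdan projection. By Proposition~\ref{prop:strong_neigh_discrete}, $\cF'$ contains $\cG(\cE,\varepsilon\ell)$ for some $\varepsilon>0$, and hence also $\cF(\cE,\varepsilon\ell)$, since a representation on a space in $\cE$ is a subrepresentation of itself. In particular $\|f\|_{\cF(\cE,\varepsilon\ell)} \le \|f\|_{\cF'}$ for all $f \in C_c(G)$.

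To conclude, I would use Theorem~\ref{thm=local_characterization_of_Kazhdan_constant_nondiscrete}: the Kazhdan projection in $\cC_{\cF'}(G)$ is witnessed by a compactly supported measure $m$ with $\int 1\,dm=1$ such that $\delta_S^\pi(\pi(m)x) \le \frac 1 2 \delta_S^\pi(x)$ for all $(\pi,E)\in\cF'$ and $x \in E$. This inequality holds in particular for every $(\pi,E)\in\cF(\cE,\varepsilon\ell)\subseteq\cF'$, so the same theorem produces a Kazhdan projection in $\cC_{\cF(\cE,\varepsilon\ell)}(G)$. Taking $c=\varepsilon$ in Definition~\ref{defn:robustT}, this is precisely robust property (T) with respect to $\cE$.

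The step I expect to be the real content is the one glossed over above: that (F$_{\cE}$) gives $H^1(G,\pi)=0$ for \emph{subrepresentations} of isometric representations on spaces in $\cE$, not only for isometric representations living on a space of $\cE$. The argument is to extend the affine cocycle to the ambient space $E'\in\cE$, obtain a fixed point there from (F$_{\cE}$), and push it back into the subspace using the $\pi(G)$-invariant complement of the invariant vectors that exists on superreflexive spaces. This is already what is used in the proof of Proposition~\ref{prop:FE_implies_KazhdanProj}, so no genuinely new input is needed; the remainder is bookkeeping with strong neighbourhoods and the monotonicity of the criterion of Theorem~\ref{thm=local_characterization_of_Kazhdan_constant_nondiscrete} under shrinking the class of representations.
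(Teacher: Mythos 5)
Your overall route is exactly the paper's: the paper's proof of this corollary is just ``run the proof of Corollary~\ref{cor:FE_implies_robustFE}/\ref{cor:FLp_implies_robustFLp} with Proposition~\ref{prop:strong_neigh_Em} replaced by Proposition~\ref{prop:strong_neigh_discrete}'', i.e.\ work with the closed class $\cG(\cE,0)$, get a Kazhdan projection and vanishing of $H^1$ on it, apply Theorem~\ref{thm:Kazproj}, and use that the resulting strong neighbourhood contains $\cG(\cE,\varepsilon\ell)\supset\cF(\cE,\varepsilon\ell)$; your passage from the neighbourhood to the subclass via the measure criterion of Theorem~\ref{thm=local_characterization_of_Kazhdan_constant_nondiscrete} is a correct way to make the (obvious) monotonicity explicit.

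The one step that is not yet a proof is precisely the one you single out: that (F$_\cE$) forces $H^1(G,\pi)=0$ (equivalently, the fixed point property for all affine actions) for every $(\pi,E)\in\cG(\cE,0)$, i.e.\ for \emph{subrepresentations} of isometric representations on spaces of $\cE$. Your mechanism ``extend the cocycle to $E'\in\cE$, get a fixed point $x'\in E'$, push it back using the invariant complement of the invariant vectors'' does not work as stated: what you need is some $x\in E$ with $b(g)=x-\pi(g)x$, equivalently $x'\in E+(E')^{\pi'}$, and decomposing $x'=u+v$ along $E'=(E')^{\pi'}\oplus F'$ only gives $b(g)=v-\pi'(g)v$ with $v\in F'$, with no reason for $v$ to lie in $E$ --- indeed the assertion $x'\in E+(E')^{\pi'}$ is exactly the vanishing of the class of $b$ in $H^1(G;\pi)$, so the argument as written is circular. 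Two standard repairs: (i) since the orbit $\{b(g)\}$ is bounded (being a coboundary in $E'$) and $\pi$ is isometric on the reflexive space $E$, the closed convex hull of an affine orbit is weakly compact and invariant, and Ryll--Nardzewski (or a circumcenter argument after an invariant uniformly convex renorming, available in the superreflexive case) gives a fixed point inside $E$; or (ii) restrict to the invariant closed subspace $W=E+\C x'\subset E'$ and use the precise form of the decomposition of \cite{baderrosendalsauer}, namely $W=W^{\pi'}\oplus \overline{\mathrm{span}}\{\pi'(g)w-w:\ w\in W,\ g\in G\}$: the second summand is contained in $E$ (because $\pi'(g)x'-x'=-b(g)\in E$), so writing $x'=u+v$ in this decomposition yields $v\in E$ and $b(g)=v-\pi(g)v$. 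To be fair, the paper itself leaves this point implicit inside the discrete case of Proposition~\ref{prop:FE_implies_KazhdanProj} (whose proof needs the fixed point hypothesis of Lemma~\ref{lem:FE_implies_uniform_spectral_gap} for all of $\cG(\cE,0)$), so you have correctly located where the real content lies; but as written your push-back step would not compile into a proof without one of these fixes.
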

Recall \cite{oppenheim} that the converse holds if for every $X \in \cE$, there is $1\leq p \leq \infty$ such that $X \oplus_p \C \in \cE$.
\subsection{From compactly generated to compactly presented}\label{subsection:compactlypresented}
We can now state a result, which is an extension to Banach space representations of a result proved for unitary representations by Shalom \cite{shalom} for discrete groups, and Fisher--Margulis \cite{fishermargulis} for locally compact groups. 

A locally compact group $G$ with a compact generating set $S$ is said to be compactly presented if, as an abstract group, $G$ has a presentation with $S$ as a set of generators and with relators of bounded length (this does not depend on $S$).

If $H$ is a quotient of $G$ and $\cF \subset \cR$ we denote $\cF_{[H]}$ the set of all equivalence classes of representations in $\cF$ which factor through $H$.

\begin{thm}\label{thm:Kajproj_comapctly_presented} Assume that $G$ is compactly presented, that $H$ is a quotient of $G$ by a discrete normal subgroup, and let $\cF \subset \cG$ be closed. If $\cC_{\cF_{[H]}}(H)$ has a Kazhdan projection and $H^1(H;\pi)=0$ for every $(\pi,E) \in \cF_{[H]}$, then there is a compactly presented intermediate quotient $G \to H' \to H$ such that $\cC_{\cF_{[H']}}(H')$ has a Kazhdan projection and $H^1(H';\pi)=0$ for every $(\pi,E) \in \cF_{[H']}$.
\end{thm}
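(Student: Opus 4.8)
The strategy is to transport the local characterization of Theorem \ref{thm=local_characterization_of_Kazhdan_constant_nondiscrete} from $H$ up to a sufficiently large intermediate quotient $H'$, exploiting that compact presentation of $G$ bounds the ``length'' of the kernel. First I would apply Theorem \ref{thm=local_characterization_of_Kazhdan_constant_nondiscrete} to $H$ and the class $\cF_{[H]}$: there is a compactly supported measure $m$ on $H$, with $\int 1\, dm = 1$ and support contained in $\bar S^R$ (where $\bar S$ is the image of $S$ in $H$), such that $\delta_{\bar S}^\sigma(\sigma(m)x) \leq \tfrac12 \delta_{\bar S}^\sigma(x)$ for every affine action $\sigma$ of $H$ with linear part in $\cF_{[H]}$. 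Since $H$ is a quotient of $G$ by a discrete normal subgroup $N$, one can lift $m$ to a compactly supported measure $\tilde m$ on $G$ (choosing coset representatives in $\bar S^R$-preimages), still with $\int 1\, d\tilde m = 1$ and support in $S^{R'}$ for some $R'$. The candidate for $H'$ will be the quotient of $G$ by the normal subgroup $N'$ generated by $N \cap S^{R'+2}$ (or some similar explicit radius governed by $m$ and the relator length of $G$); since $G$ is compactly presented and $N'$ is generated by finitely many elements up to conjugation, $H'$ is again compactly presented, and $G \to H' \to H$ is the required factorization provided $R'$ is chosen large enough that the defining relators of $G$ have length $\leq R'$.

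Next I would verify that the pushed-forward measure $m'$ on $H'$ still satisfies the local shrinking inequality for $\cF_{[H']}$. The key point is locality: the inequality $\delta_{\bar S}^\sigma(\sigma(m)x)\leq \tfrac12\delta_{\bar S}^\sigma(x)$ only involves the action of elements of $\bar S^{R+1}$ on the orbit of $x$, i.e.\ it is a statement about the group multiplication table restricted to a ball of radius $R+1$. Since $H'$ and $H$ agree on balls of radius $\sim R'$ (because $N' $ and $N$ agree up to that radius, by construction and by the bounded-relator hypothesis for $G$), any affine action $\sigma'$ of $H'$ with linear part in $\cF_{[H']}$, pulled back along $H' \to H$... wait — the subtlety is the wrong direction: representations of $H'$ need not factor through $H$. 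Here I would instead argue via strong neighbourhoods exactly as in the proof of Theorem \ref{thm:Kazproj}: consider a net of representations of $H'$ (equivalently, of $G$ factoring through $H'$) not satisfying the $m'$-inequality, extract via a cofinal ultrafilter an ultraproduct affine action, and show its linear part is a representation of $H$ (because the $N'$-action becomes trivial in the limit — this uses that $N' \subset N$ and that the relevant relations survive ultraproduct) lying in $\overline{\cF_{[H]}} = \cF_{[H]}$, contradicting the choice of $m$. This is where the closedness of $\cF$ and Theorem \ref{thm:topology_ultraproducts} enter.

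The main obstacle, as in the sketch above, is bookkeeping the relationship between representations of $H'$ and of $H$: a representation of $H'$ that is ``close'' to one of $H$ need not factor through $H$, so one cannot argue by direct restriction. The resolution is to run the ultraproduct/strong-neighbourhood machinery of Section \ref{sec:topology} on the intermediate quotients, checking that (i) an ultraproduct of representations factoring through $H'$, all of which are ``almost $H$-representations'' on a fixed ball, has linear part factoring through $H$ in the limit, and (ii) the cocycles arising (as in the proof of Theorem \ref{thm:Kazproj}, of the form $b_i(g) = \sigma_i(g)\sigma_i(f_0)x_i - \sigma_i(f_0)x_i$) are pointwise bounded and equicontinuous so that Proposition \ref{prop:ultraproduct_affine} applies. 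Once this is set up, combining the converse direction of Theorem \ref{thm=local_characterization_of_Kazhdan_constant_nondiscrete} for $H'$ with $m'$ yields both the Kazhdan projection in $\cC_{\cF_{[H']}}(H')$ and the vanishing $H^1(H';\pi)=0$ for all $(\pi,E)\in\cF_{[H']}$, completing the proof. I would also need to double-check that the discreteness of $\ker(G \to H)$ is what makes the lifting of $m$ to $\tilde m$ possible with controlled support; this is where that hypothesis is used.
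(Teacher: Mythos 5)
Your toolkit is the right one: Theorem \ref{thm=local_characterization_of_Kazhdan_constant_nondiscrete} applied to $H$ and $\cF_{[H]}$, a regularized measure $m'=m^{\ast k}\ast f_0$, the cocycles $b_i(g)=\sigma_i(g)\sigma_i(f_0)x_i-\sigma_i(f_0)x_i$, Proposition \ref{prop:ultraproduct_affine}, closedness of $\cF$, and the converse direction of Theorem \ref{thm=local_characterization_of_Kazhdan_constant_nondiscrete} at the end; and lifting $m$ to $G$ is indeed the (routine) place where discreteness of the kernel is used. The gap is in the structure of your compactness argument: you fix the intermediate quotient $H'=G/N'$ in advance, with $N'$ normally generated by $N\cap S^{R'}$ for an explicit radius $R'$ ``governed by $m$ and the relator length''. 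With $H'$ fixed, the contradiction argument collapses: an ultraproduct of affine actions whose linear parts factor through $H'$ is only forced to factor through $H'$, since only the elements of $N'$ must act trivially in the limit; an element of $N\setminus N'$ may act nontrivially in every term of your net and hence in the limit. Triviality of the $N'$-action is the wrong direction for factoring through $H=G/N$, since $N'\subset N$, and the condition you insert to save this --- that the counterexample representations are ``almost $H$-representations on a fixed ball'' --- has no justification: nothing prevents $\cF_{[H']}$ from containing actions that do not remotely come from $H$, however large the fixed $R'$ is. So no contradiction with the choice of $m$ is reached.

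The paper's proof differs exactly at this point: it does not (and cannot) specify the quotient in advance. It introduces the whole increasing family $H_n=\langle S,R_n\rangle$, where $R_n$ is the set of words of length $\le n$ in $S$ that are trivial in $H$ (with $n$ at least the relator length $n_0$ of $G$), assumes for contradiction that for \emph{every} $n$ there is an action $\sigma_n$ of $H_n$ with linear part in $\cF_{[H_n]}$ violating the $m'$-inequality, and takes an ultraproduct along a cofinal ultrafilter on $\N$. Because each $g\in N$ has finite word length, it dies in $H_n$ for all $n\ge \ell(g)$, so by cofinality the limit action genuinely factors through $H$; its linear part lies in $\cF_{[H]}$ by closedness, and then Proposition \ref{prop:ultraproduct_affine}, the shrinking inequality for $m$ on $H$, and \eqref{eq:f_0_does_not_increase_delta_too_much} give $\frac12\le 2^{-k}C$, a contradiction. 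The conclusion is only that \emph{some} unspecified $H_n$ satisfies the $m'$-inequality (whence, by the converse of Theorem \ref{thm=local_characterization_of_Kazhdan_constant_nondiscrete}, the Kazhdan projection and the vanishing of $H^1$); no effective bound on $n$ in terms of the support of $m$ and the relator length comes out, and producing one is precisely what your proposed argument cannot deliver.
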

\begin{proof} The theorem is proved exactly as Theorem \ref{thm:Kazproj}, so we only give a short sketch. By Theorem \ref{thm=local_characterization_of_Kazhdan_constant_nondiscrete} there is a measure $m$ with $\int 1 dm=1$ and such that 
\begin{equation} \label{eq:deltaSsigma}\delta_S^\sigma( \sigma(m) x) \leq \frac 1 2 \delta_S^\sigma(x)\end{equation} for every affine action $\sigma$ of $H$ on $E$ with linear part in $\cF_{[H]}$ and every $x \in E$. Let $f_0 \in L_1(G)$, $C >0$, $k \in \N$ and $m' = m^{\ast k} \ast f_0$ as in the proof of Theorem \ref{thm:Kazproj}.

Since $G$ is compactly presented, it has a presentation $G= \langle S,R\rangle$ with relations of length $\leq n_0$. For $n \geq n_0$, let $R_n$ be the set of words of length less than or equal to $n$ in the letters $S$ which are trivial in $H$, and define a sequence of intermediate compactly presented intermediate quotients $H_n$ by
\[H_n= \langle S,R_n\rangle.\] 

Assume by contradiction that for every $n$, there is an affine action $\sigma_n$ of $H_n$ on $E_n$ with linear part in $\cF_{[H_n]}$ and $x_n \in E_n$ such that $\delta_S^{\sigma_n}(\sigma(m') x_n)> \frac  1 2 \delta_S^{\sigma_n}(x_n) = \frac 1 2$. Consider the cocycle $b_n(g) = \sigma_n(g) \sigma_n(f_0) x_n - \sigma_n(f_0) x_n$. Let $\cU$ be a cofinal ultrafilter on $\N$. Then there is a separable subaction $\sigma$ of the ultraproduct action which factors through $H$ (because it factors though $H_n$ for all $n$), with linear part in $\cF$ (because $\cF$ is closed) and with translation part $b(g) = (b_n(g))_{\cU}$. Therefore by using successively Proposition \ref{prop:ultraproduct_affine}, \eqref{eq:deltaSsigma} and \eqref{eq:f_0_does_not_increase_delta_too_much} we get
\[ \frac 1 2 \leq \lim_\cU \delta_S^{\sigma_n}(\sigma_n(m')x_n) = \delta_S^\sigma( \sigma(m)^k 0) \leq 2^{-k} \delta_S^\sigma(0) \leq 2^{-k} C,\]
a contradiction with the definition of $k$.
\end{proof}

An example of consequence is the following result. However, as Masato Mimura pointed out to us, there is a more direct and easy proof, that he attributes to Gromov--Schoen \cite{gromov} (see also \cite{stalder}), of this corollary which works without the assumption that $\cE$ is superreflexive. 
\begin{cor}\label{cor:Flp_open_property} Let $\cE$ be a class of superreflexive Banach spaces closed under finite representability. If a locally compact group compactly generated group $H$ has (F$_{\cE}$), then $H$ is the quotient by a discrete normal subgroup of a compactly presented locally compact group with (F$_{\cE}$).
\end{cor}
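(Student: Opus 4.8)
The plan is to derive Corollary \ref{cor:Flp_open_property} as a direct application of Theorem \ref{thm:Kajproj_comapctly_presented}, after first reducing the hypothesis (F$_\cE$) to the setting of that theorem. The starting point is that a compactly generated locally compact group $H$ is always the quotient, by a discrete normal subgroup, of a compactly presented locally compact group $G$: indeed, one may take $G$ to be the universal cover of the Cayley--Abels graph of $H$ with respect to $S$, or more algebraically, write $H=\langle S \mid R\rangle$ for some set of relators and let $G=\langle S\mid R_0\rangle$ where $R_0$ is any \emph{finite-length} subset of $R$; then $G$ is compactly presented and surjects onto $H$ with discrete kernel. So there is at least \emph{some} compactly presented $G$ with $G\twoheadrightarrow H$, and the content of the corollary is that among the intermediate quotients one can be found which retains (F$_\cE$).

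Next I would set up the representation-theoretic side. Fix $M>1$ large enough (any $M>1$ works since all our representations below are isometric) and take $\cR$ as in \S\ref{subsection:R}. Let $\cF\subset\cR$ be the set of isometric representations of $G$ (equivalently of $H$, since we will restrict attention to representations factoring through $H$) on a Banach space finitely representable in $\cE$; since $\cE$ is closed under finite representability, $\cF$ is closed in $\cR$ by Remark \ref{rem:closure_implies_fin_rep+bound}. The hypothesis that $H$ has (F$_\cE$) — and in fact (F$_{\overline\cE}$), because a finitely representable space is a subspace of an ultraproduct of spaces in $\cE$ and a fixed point for an affine action on the ultraproduct restricts appropriately; more simply, (F$_\cE$) for a superreflexive $\cE$ closed under finite representability is equivalent to (F$_{\overline\cE}$) — tells us that every affine action with linear part in $\cF_{[H]}$ has a fixed point, i.e. $H^1(H;\pi)=0$ for every $(\pi,E)\in\cF_{[H]}$. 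By Proposition \ref{prop:FE_implies_KazhdanProj} (applicable precisely because $\cE$ is superreflexive and stable by finite representability), $\cC_{\cF_{[H]}}(H)$ has a Kazhdan projection.

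Now the hypotheses of Theorem \ref{thm:Kajproj_comapctly_presented} are met: $G$ is compactly presented, $H$ is a quotient of $G$ by a discrete normal subgroup, $\cF\subset\cR$ is closed, $\cC_{\cF_{[H]}}(H)$ has a Kazhdan projection, and $H^1(H;\pi)=0$ for every $(\pi,E)\in\cF_{[H]}$. The theorem then produces a compactly presented intermediate quotient $G\to H'\to H$ such that $\cC_{\cF_{[H']}}(H')$ has a Kazhdan projection and $H^1(H';\pi)=0$ for every $(\pi,E)\in\cF_{[H']}$. The last step is to translate ``$H^1(H';\pi)=0$ for every isometric representation of $H'$ on a space finitely representable in $\cE$'' back into ``$H'$ has (F$_\cE$)'': this is immediate since every space in $\cE$ is in particular finitely representable in $\cE$, so every isometric representation of $H'$ on a space in $\cE$ lies in $\cF_{[H']}$, hence every affine action of $H'$ on a space in $\cE$ has a fixed point. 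This gives the conclusion.

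The only genuinely delicate point is the passage from (F$_\cE$) to the vanishing of $H^1(H;\pi)$ for \emph{every} $(\pi,E)\in\cF_{[H]}$, i.e. for representations on the possibly-larger class $\overline\cE$ of spaces finitely representable in $\cE$ rather than literally in $\cE$ — but this is exactly the kind of ultraproduct stability encapsulated in Proposition \ref{prop:FE_implies_KazhdanProj} and Lemma \ref{lem:FE_implies_uniform_spectral_gap} (the closedness of $\cF$ is what makes the ultraproduct argument go through), and once one is allowed to invoke those, the corollary is a formal consequence. I expect no further obstacle; everything else is bookkeeping about quotients and the definition of $\cR$.
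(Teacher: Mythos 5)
Your proposal is correct and follows essentially the same route as the paper's proof: take a compactly presented locally compact cover $G\twoheadrightarrow H$ with discrete kernel, set $\cF=\cF(\cE,0)$ (closed by Remark \ref{rem:closure_implies_fin_rep+bound}), get the Kazhdan projection from Proposition \ref{prop:FE_implies_KazhdanProj} and the cohomology vanishing from (F$_\cE$), and apply Theorem \ref{thm:Kajproj_comapctly_presented}. The only point to tighten is the existence of the compactly presented cover: for non-discrete $H$ your ad-hoc justifications (arbitrary bounded-length subset of relators, or the Cayley--Abels graph, which only exists in the totally disconnected case) are not quite enough, and the paper instead invokes the theorem of Abels (rediscovered by Fisher--Margulis), which also supplies the separability of $G$ needed for the choice of $\cR$.
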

\begin{rem} For discrete groups, the same conclusion holds under the weaker assumption that $\cE$ is a class of superreflexive Banach spaces closed under ultraproducts.
\end{rem}
\begin{proof} For convenience of notation we only give the proof when $H$ is separable. It was essentially proved in \cite{abels} and rediscovered in \cite{fishermargulis} that there is a compactly presented group $G$ and a continuous surjective group homomorphism $G \to H$ with discrete kernel. Moreover if one follows the proof, $G$ is separable. Take $\cR$ as defined in \S \ref{subsection:R} for this group $G$ and some $M>1$, and let $\cF = \cF(\cE,0)$. It is closed (see Remark \ref{rem:closure_implies_fin_rep+bound}). By Proposition  \ref{prop:FE_implies_KazhdanProj} $\cC_{\cF_{[H]}}(G)$ has a Kazhdan projection and $H^1(G;\pi)=0$ for every $(\pi,E) \in \cF_{[H]}$. By the previous theorem, there is a compactly presented intermediate group $G \to H' \to H$ such that the same holds for $\cF_{[H']}$. In particular $H'$ has (F$_{\cE}$).
\end{proof}

One can imagine other results of this kind. Let us state one that we will use in a forthcoming work with Gomez-Apparicio and Liao.
\begin{cor}\label{cor:strongTcompactly_presented} Let $\cE$ be a class of Banach spaces stable by finite representability and containing an infinite dimensional space. Assume that $G$ is compactly presented, and that $H$ is a quotient by a discrete subgroup such that $H$ has strong property (T) with respect to $\cE$. Then for every $C>0$ there is $s>0$ and a compactly presented intermediate quotient $G\to H' \to H$ such that $\cC_{\cF(\cE,s\ell +C)}(H')$ has a Kazhdan projection, which is self-adjoint if $\cE$ is stable by duality.
\end{cor}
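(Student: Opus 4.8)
The plan is to transcribe the proof of Theorem~\ref{thm:Kajproj_comapctly_presented}, replacing its two hypotheses (a Kazhdan projection and vanishing of $H^1$) by the single hypothesis that $H$ has strong property (T) with respect to $\cE$, and discarding everything about cohomology since we only want to produce a Kazhdan projection. Fix $C>0$ and let $s>0$ witness strong property (T) for $H$, so that $\cC_{\cF(\cE,s\ell+C)}(H)$ has a Kazhdan projection (here $\ell$ always denotes word length with respect to $S$ or its image; I will use the elementary fact that the minimal $\ell$-length of a lift to $G$ of an element of a quotient equals its length there, which makes the classes $\cF(\cE,s\ell+C)$ for $G$, for the intermediate quotients below, and for $H$ all compatible under pull-back). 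The first step is to apply the implication (\ref{item=kazhdan})$\implies$(\ref{item=local_shrinking}) of Theorem~\ref{thm=local_characterization_of_Kazhdan_constant_nondiscrete} to get a compactly supported measure $m$ on $H$ with $\int 1\,dm=1$ and $\delta_S^\pi(\pi(m)x)\le\frac12\delta_S^\pi(x)$ for every $(\pi,E)\in\cF(\cE,s\ell+C)$ regarded as a representation of $H$ and every $x\in E$. Since the kernel of $G\to H$ is discrete, $G\to H$ is a covering of topological groups, so I would lift $m$ to a compactly supported measure $\widetilde m$ on $G$ with push-forward $m$; then $\rho(\widetilde m)=\pi(m)$ whenever $\rho$ is the pull-back to $G$ of a representation $\pi$ of $H$. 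Next I would fix a nonnegative $f_0\in C_c(G)$ with $\int f_0=1$ and, as for \eqref{eq:f_0_does_not_increase_delta_too_much}, a constant $C_{f_0}$ such that $\delta_S^{\pi'}(\pi'(f_0)x)\le C_{f_0}\delta_S^{\pi'}(x)$ for all $(\pi',E)\in\cR$ and $x$ (taking $\cR$ as in \S\ref{subsection:R} for $G$ with $M$ large enough to contain all representations arising below), choose $k\in\N$ with $C_{f_0}2^{-k}<\frac12$, and set $m'=\widetilde m^{\ast k}\ast f_0$.

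Then, exactly as in Theorem~\ref{thm:Kajproj_comapctly_presented}, I would write $G=\langle S\mid R\rangle$ with relators of length $\le n_0$, let $R_n$ ($n\ge n_0$) be the words in $S$ of length $\le n$ trivial in $H$, and $H_n=\langle S\mid R_n\rangle$; these are compactly presented intermediate quotients $G\to H_n\to H$, and every element of $\ker(G\to H)$ lies in $\ker(G\to H_n)$ for $n$ large. The claim to establish is that for some $n$ the measure $m'$ satisfies $\delta_S^\pi(\pi(m')x)\le\frac12\delta_S^\pi(x)$ for every $(\pi,E)\in\cF(\cE,s\ell+C)$ regarded as a representation of $H_n$ and every $x$; by the converse implication (\ref{item=local_shrinking})$\implies$(\ref{item=kazhdan}) of Theorem~\ref{thm=local_characterization_of_Kazhdan_constant_nondiscrete} this produces a Kazhdan projection in $\cC_{\cF(\cE,s\ell+C)}(H_n)$ and one takes $H'=H_n$. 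I would argue by contradiction: if for every $n$ there were $(\pi_n,E_n)\in\cF(\cE,s\ell+C)$ (a representation of $H_n$ on a space in $\cE$) and $x_n\in E_n$ with $\delta_S^{\pi_n}(\pi_n(m')x_n)>\frac12\delta_S^{\pi_n}(x_n)$, I would normalize $\delta_S^{\pi_n}(x_n)=1$ and set $y_n=\pi_n(f_0)x_n$, so that $\delta_S^{\pi_n}(y_n)\le C_{f_0}$ and $\pi_n(m')x_n=\pi_n(\widetilde m)^k y_n$.

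The heart of the argument is then the ultraproduct step, carried out as in the proofs of Theorem~\ref{thm:Kazproj} and Lemma~\ref{lem:FE_implies_uniform_spectral_gap}. The regularized family $(y_n)$ is equicontinuous at the identity (because $\|\pi_n(g)y_n-y_n\|\le\|\lambda_g f_0-f_0\|_{L_1}\sup_h\|\pi_n(h)x_n-x_n\|$ is uniformly small for $g$ near $e$, using \eqref{ineq_delta} and $\delta_S^{\pi_n}(x_n)=1$), so after picking a cofinal ultrafilter $\cU$ on $\N$, forming the ultraproduct $(\pi_\cU,E_\cU)$ and passing to a separable subrepresentation containing the image of $(y_n)$, Theorem~\ref{thm:topology_ultraproducts} gives a representation of $G$ which factors through $H$ (it kills $\ker(G\to H_n)$ for $\cU$-almost every $n$), whose underlying space is finitely representable in $\cE$ hence lies in $\cE$, and with $\|\pi_\cU(g)\|\le\lim_\cU\|\pi_n(g)\|\le e^{s\ell(g)+C}$; thus it belongs to $\cF(\cE,s\ell+C)$ as a representation of $H$, and $y:=(y_n)_\cU\in E_\cU$ by Lemma~\ref{lem=characterization_XU}. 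Since $\pi_\cU(\widetilde m)$ coincides with $\int_H\pi_\cU(g)\,dm(g)$ (because $\widetilde m$ pushes forward to $m$ and $\pi_\cU$ factors through $H$), Lemma~\ref{lem:ultraproduct_convergence_on_compactly_supported_measures} gives $\delta_S^{\pi_\cU}(\pi_\cU(\widetilde m)^k y)=\lim_\cU\delta_S^{\pi_n}(\pi_n(m')x_n)\ge\frac12$, while $\delta_S^{\pi_\cU}(y)=\lim_\cU\delta_S^{\pi_n}(y_n)\le C_{f_0}$ and iterating the $\frac12$-shrinking of $m$ $k$ times yields
\[\tfrac12\le\delta_S^{\pi_\cU}\big(\pi_\cU(\widetilde m)^k y\big)\le 2^{-k}\delta_S^{\pi_\cU}(y)\le 2^{-k}C_{f_0},\]
contradicting $C_{f_0}2^{-k}<\frac12$, and proving the existence of $H'$.

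Finally, for the self-adjointness clause: if $\cE$ is stable by duality, then — since stability under finite representability already forces stability under subspaces, and $\ell$ is symmetric — the contragredient of a representation in $\cF(\cE,s\ell+C)$ is again in $\cF(\cE,s\ell+C)$, so this class is weakly self-adjoint and Corollary~\ref{cor:KP_automatically_central} makes the Kazhdan projection automatically central and self-adjoint. I expect the only real obstacle to be the bookkeeping in the ultraproduct step, namely checking that the ultraproduct representation genuinely falls back inside the class $\cF(\cE,s\ell+C)$ \emph{for $H$}: this is exactly where the $f_0$-regularization (equicontinuity of $(y_n)$), the word-length comparison between $G$, the $H_n$ and $H$, and the stability of $\cE$ under finite representability are all used; everything else is a faithful copy of the arguments for Theorems~\ref{thm:Kazproj} and~\ref{thm:Kajproj_comapctly_presented}.
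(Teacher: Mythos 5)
There is a genuine gap, and it sits exactly at the point you dismissed as bookkeeping. In your contradiction argument you normalize $\delta_S^{\pi_n}(x_n)=1$ and then form $y:=(y_n)_\cU$ with $y_n=\pi_n(f_0)x_n$. But the ultraproduct $\prod_\cU E_n$ only contains classes of \emph{bounded} families, and nothing bounds $\|x_n\|$ (or $\|y_n\|$): the normalization controls the displacement $\delta_S^{\pi_n}(x_n)$, not the norm, and a uniform bound of $\|x_n\|$ (even modulo $E_n^{\pi_n}$) in terms of $\delta_S^{\pi_n}(x_n)$ is precisely the uniform Kazhdan-type estimate for the classes over the $H_n$ that you are trying to establish (compare Lemma \ref{lem=kazhdan_implies_noaivector}, which is only available \emph{after} one has a Kazhdan projection). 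So the element $y$ on which your whole contradiction is evaluated need not exist, and the purely linear version of the ultraproduct step collapses in exactly the ``almost invariant unit vectors'' regime described in the introduction. The natural repair is the one the paper uses: replace the vectors by the coboundaries $b_n(g)=\pi_n(g)y_n-y_n$, which \emph{are} pointwise bounded and equicontinuous, and take the ultraproduct \emph{affine} action (Proposition \ref{prop:ultraproduct_affine}). But then the limit object is a cocycle for $\pi_\cU$ which is a priori not a coboundary, and the shrinking inequality you need to iterate is the affine one, $\delta_S^\sigma(\sigma(m)z)\le\frac12\delta_S^\sigma(z)$; by the last part of Theorem \ref{thm=local_characterization_of_Kazhdan_constant_nondiscrete} this holds for an affine action exactly when it has a fixed point. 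In other words, you cannot ``discard everything about cohomology'': you need $H^1(H;\pi)=0$ for the representations in $\cF(\cE,s\ell+C)_{[H]}$ as an input.

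This is how the paper proceeds: strong property (T) gives the Kazhdan projections, Lafforgue's argument \cite[\S 5.3]{lafforguefastfourier} upgrades them to $H^1(H;\pi)=0$ for representations on \emph{hyperplanes} of spaces in $\cE$ with the right growth, and the hyperplane trick from the proof of Corollary \ref{cor:FE} (every separable space of $\cE$ is a hyperplane in a space of $\cE$) converts this into the vanishing needed to invoke Theorem \ref{thm:Kajproj_comapctly_presented}. Note that this last step is where the hypothesis that $\cE$ contains an infinite-dimensional space is used --- a hypothesis your proof never invokes, which is a telltale sign that something is missing. The rest of your write-up (compatibility of word lengths under the quotients $G\to H_n\to H$, the presentation argument, closedness of the class via finite representability and Proposition \ref{prop:ultraproduct_Lp}-style arguments, and the weak self-adjointness giving a self-adjoint projection via Corollary \ref{cor:KP_automatically_central}) is fine and matches the paper, but the core of the argument must be run with affine actions and the cohomological input, not with bare vectors.
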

\begin{proof} Since $H$ has strong (T) with respect to $\cE$, there exists $s>0$ such that $\cC_{\cF(\cE,s\ell+C)_{[H]}}(H)$ has a central Kazhdan projection for every $C>0$. By an argument of Lafforgue \cite[\S 5.3]{lafforguefastfourier}, this implies that $H^1(H;\pi)=0$ for every $(\pi,E)$ with $E$ a hyperplane in a space in $\cE$ and with a constant $C$ such that $\|\pi(g)\| \leq e^{s\ell(g)+C}$ for all $g$ in $G$. So the corollary is a consequence of Theorem \ref{thm:Kajproj_comapctly_presented} and of the fact, already proved in the proof of Corollary \ref{cor:FE}, that every separable space $E$ in $\cE$ is isomorphic to a hyperplane in another space $E'$ in $\cE$. 
\end{proof}


\begin{thebibliography}{KM98b}

\bibitem{abels}
H.~Abels.
\newblock Kompakt definierbare topologische {G}ruppen.
\newblock {\em Math. Ann.}, 197:221--233, 1972.

\bibitem{baderfurmangelandermonod}
U.~Bader, A.~Furman, T.~Gelander, and N.~Monod.
\newblock Property ({T}) and rigidity for actions on {B}anach spaces.
\newblock {\em Acta Math.}, 198(1):57--105, 2007.

\bibitem{badernowak}
U.~Bader and P.~W. Nowak.
\newblock Cohomology of deformations.
\newblock {\em J. Topol. Anal.}, 7(1):81--104, 2015.

\bibitem{baderrosendalsauer}
U.~Bader, C.~Rosendal, and R.~Sauer.
\newblock On the cohomology of weakly almost periodic group representations.
\newblock {\em J. Topol. Anal.}, 6(2):153--165, 2014.

\bibitem{bekkadelaharpevalette}
B.~Bekka, P.~de~la Harpe, and A.~Valette.
\newblock {\em Kazhdan's property ({T})}, volume~11 of {\em New Mathematical
  Monographs}.
\newblock Cambridge University Press, Cambridge, 2008.

\bibitem{brownFisherHurtado}
A.~Brown, D.~Fisher, S.~Hurtado.
\newblock Zimmer's conjecture: Subexponential growth, measure rigidity, and strong property ({T}).
\newblock arXiv:1608.04995 (2016).

\bibitem{CherixCowlingStraub}
P.-A. Cherix, M.~Cowling, and B.~Straub.
\newblock Filter products of {$C_0$}-semigroups and ultraproduct
  representations for {L}ie groups.
\newblock {\em J. Funct. Anal.}, 208(1):31--63, 2004.

\bibitem{dacunhacastellekrivine}
D.~Dacunha-Castelle and J.~L. Krivine.
\newblock Applications des ultraproduits \`a l'\'etude des espaces et des
  alg\`ebres de {B}anach.
\newblock {\em Studia Math.}, 41:315--334, 1972.


\bibitem{drutunowak}
C.~Dru\c{t}u and P.~W.~Nowak.
\newblock Kazhdan projections, random walks and ergodic theorems.
\newblock {\em J. Reine Angew. Math.}, 10.1515/crelle-2017-0002, 2017.

\bibitem{enflo}
P.~Enflo.
\newblock Banach spaces which can be given an equivalent uniformly convex norm.
\newblock {\em Israel J. Math.}, 13:281--288 (1973), 1972.

\bibitem{fishermargulis}
D.~Fisher and G.~Margulis.
\newblock Almost isometric actions, property ({T}), and local rigidity.
\newblock {\em Invent. Math.}, 162(1):19--80, 2005.

\bibitem{gromov}
M.~Gromov.
\newblock Random walk in random groups.
\newblock {\em Geom. Funct. Anal.}, 13(1):73--146, 2003.

\bibitem{Heinrich}
S.~Heinrich.
\newblock Ultraproducts in {B}anach space theory.
\newblock {\em J. Reine Angew. Math.}, 313:72--104, 1980.

\bibitem{delaatdelasalle1}
T.~de~Laat and M.~de~la Salle.
\newblock Strong property ({T}) for higher-rank simple {L}ie groups.
\newblock {\em Proc. Lond. Math. Soc. (3)}, 111(4):936--966, 2015.

\bibitem{delaatdelasalle2}
T.~de Laat and M.~de la Salle.
\newblock Approximation properties for noncommutative $L^p$-spaces of high rank lattices and nonembeddability of expanders.
\newblock {\em J. Reine Angew. Math.}, 10.1515/crelle-2015-0043, 2015.


\bibitem{lafforguestrongt}
V.~Lafforgue.
\newblock Un renforcement de la propri\'et\'e ({T}).
\newblock {\em Duke Math. J.}, 143(3):559--602, 2008.

\bibitem{lafforguefastfourier}
V.~Lafforgue.
\newblock Propri\'et\'e ({T}) renforc\'ee banachique et transformation de
  {F}ourier rapide.
\newblock {\em J. Topol. Anal.}, 1(3):191--206, 2009.

\bibitem{liao}
B.~Liao.
\newblock Strong {B}anach property ({T}) for simple algebraic groups of higher
  rank.
\newblock {\em J. Topol. Anal.}, 6(1):75--105, 2014.

\bibitem{JohnsonLindenstrauss}
W.~B. Johnson and J.~Lindenstrauss.
\newblock Basic concepts in the geometry of {B}anach spaces.
\newblock In {\em Handbook of the geometry of {B}anach spaces, {V}ol. {I}},
  pages 1--84. North-Holland, Amsterdam, 2001.

  
\bibitem{mimura}
M.~Mimura.
\newblock Metric Kazhdan constants.
\newblock {\em In preparation}.

\bibitem{oppenheim}
I.~Oppenheim.
\newblock Alternating projections, angles between groups and strengthening of property (T).
\newblock {\em Math. Ann.}, 367(1-2):623--666, 2017.

\bibitem{pisier}
G.~Pisier.
\newblock Martingales with values in uniformly convex spaces.
\newblock {\em Israel J. Math.}, 20(3-4):326--350, 1975.

\bibitem{raynaud}
Y.~Raynaud.
\newblock On ultrapowers of non commutative {$L_p$} spaces.
\newblock {\em J. Operator Theory}, 48(1):41--68, 2002.


\bibitem{delasalle1}
M.~de la Salle.
\newblock Towards Strong Banach property (T) for $\mathrm{SL}(3,\mathbb{R})$.
\newblock {\em Israel J.~Math} 211(1):105--145 (2015).


\bibitem{shalom}
Y.~Shalom.
\newblock Rigidity of commensurators and irreducible lattices.
\newblock {\em Invent. Math.}, 141(1):1--54, 2000.

\bibitem{shulman}
T.~ Shulman.
\newblock On subspaces of invariant vectors.
\newblock {\em Studia Math.}, 236(1):1--11, 2017.


\bibitem{stalder}
Y.~Stalder.
\newblock Fixed point properties in the space of marked groups.
\newblock In {\em Limits of graphs in group theory and computer science}, pages
  171--182. EPFL Press, Lausanne, 2009.

\bibitem{tzafriri}
L.~Tzafriri.
\newblock Remarks on contractive projections in {$L_{p}$}-spaces.
\newblock {\em Israel J. Math.}, 7:9--15, 1969.

\bibitem{yu}
G.~Yu.
\newblock Hyperbolic groups admit proper affine isometric actions on
  {$l^p$}-spaces.
\newblock {\em Geom. Funct. Anal.}, 15(5):1144--1151, 2005.

\bibitem{zimmer}
R.~J. Zimmer.
\newblock {\em Ergodic theory and semisimple groups}, volume~81 of {\em
  Monographs in Mathematics}.
\newblock Birkh\"auser Verlag, Basel, 1984.







\end{thebibliography}
\end{document}